\pgfplotsset{compat=1.17}
\DeclareFontFamily{OML}{rsfs}{\skewchar\font'177}
\DeclareFontShape{OML}{rsfs}{m}{n}{ <5> <6> rsfs5 <7> <8> <9>
	rsfs7 <10> <10.95> <12> <14.4> <17.28> <20.74> <24.88> rsfs10 }{}
\DeclareMathAlphabet{\mathfs}{OML}{rsfs}{m}{n}
\definecolor{greytext}{gray}{0.5}
\definecolor{darkAlice}{RGB}{16,120,149}
\definecolor{darkIndigo}{RGB}{8,48,69}
\definecolor{myIndigo}{RGB}{11,55,77}
\definecolor{darkKelly}{RGB}{130,146,84}
\definecolor{darkRuby}{RGB}{154,37,21}
\definecolor{darkCoral}{RGB}{194,86,26}
\definecolor{midnightblue}{rgb}{0.1, 0.1, 0.44}
\definecolor{lapislazuli}{rgb}{0.15, 0.38, 0.61}
\definecolor{vividauburn}{rgb}{0.58, 0.15, 0.14}
\definecolor{olive}{rgb}{0.5, 0.5, 0.0}
\definecolor{skobeloff}{rgb}{0.0, 0.48, 0.45}
\definecolor{warmblack}{rgb}{0.0, 0.26, 0.26}
\definecolor{upforestgreen}{rgb}{0.0, 0.27, 0.13}
\definecolor{linkcolour}{RGB}{0,51,153}
\definecolor{burgundy}{RGB}{128, 0, 33}
\definecolor{complementaryblue}{RGB}{0,66,128}
\definecolor{complementarygreen}{RGB}{32,128,0}
\definecolor{lightgreen}{RGB}{143,252,135}
\definecolor{lightred}{RGB}{252,135,143}
\definecolor{lightmagenta}{RGB}{244,135,252}
\definecolor{lightblue}{RGB}{77,169,255}
\definecolor{linkblue}{RGB}{0,20,128}
\definecolor{linkred}{RGB}{166,20,0}
\newtheorem{theorem}{Theorem} 
\newtheorem{thm}{Theorem}
\numberwithin{thm}{section}
\newtheorem{prop}[thm]{Proposition}
\newtheorem{lemma}[thm]{Lemma}
\newtheorem{cor}[thm]{Corollary}
\theoremstyle{definition}
\newtheorem{definition}[thm]{Definition}
\newtheorem{algorithm}{Algorithm}
\theoremstyle{remark}
\newtheorem{remark}[thm]{Remark}
\newcounter{expcnt}
\newcommand{\newexpo}{%
\refstepcounter{expcnt}%
\ensuremath{\xi_{\theexpcnt}}}
\newcommand{\expo}[1]{\ensuremath{\xi_{\ref*{#1}}}}
\newcounter{betcnt}
\newcommand{\newbet}{%
\refstepcounter{betcnt}%
\ensuremath{\beta_{\thebetcnt}}}
\newcommand{\bet}[1]{\ensuremath{\beta_{\ref*{#1}}}}
\newcounter{etacnt}
\newcommand{\newet}{%
\refstepcounter{etacnt}%
\ensuremath{\eta_{\theetacnt}}}
\newcommand{\et}[1]{\ensuremath{\eta_{\ref*{#1}}}}
\newcounter{cnt}
\newcommand{\newcnt}{%
\refstepcounter{cnt}%
\ensuremath{c_{\thecnt}}}
\newcommand{\cnt}[1]{\ensuremath{c_{\ref*{#1}}}}
\newcounter{Cnt}
\newcommand{\newCnt}{%
\refstepcounter{Cnt}%
\ensuremath{C_{\theCnt}}}
\newcommand{\Cnt}[1]{\ensuremath{C_{\ref*{#1}}}}
\newcounter{uBound}
\newcommand{\newuBound}{%
\refstepcounter{uBound}%
\ensuremath{u_{\theuBound}}}
\newcommand{\uBound}[1]{\ensuremath{u_{\ref*{#1}}}}
\DeclareMathOperator{\Geo}{Geom}
\DeclareMathOperator{\Poi}{Poi}
\DeclareMathOperator{\len}{len}
\DeclareMathOperator{\capa}{cap}
\DeclareMathOperator{\range}{\mathcal{R}}
\DeclareMathOperator{\trace}{edges}
\DeclareMathOperator{\supp}{supp}
\DeclareMathOperator{\ST}{ST}
\DeclareMathOperator{\sPath}{start}
\DeclareMathOperator{\ePath}{end}
\newcommand{\ind}[1]{\mathds{1}_{ #1 }}
\newcommand{\st}{ \: : \: }
\newcommand{\const}[2]{\hyperref[#1]{\ensuremath{#2}}}
\DeclareMathOperator{\cI}{\mathcal{I}}
\DeclareMathOperator{\RI}{RI}
\newcommand{\prob}{\mathbb{P}}
\newcommand{\prw}{\mathbf{P}}
\title[]{The chemical distance in random interlacements in the low-intensity regime} 
\author[Hern\'{a}ndez-Torres]{Sara\'{i} Hern\'{a}ndez-Torres}
\author[Procaccia]{Eviatar B. Procaccia}
\author[Rosenthal]{Ron Rosenthal}
\address{Instituto de Matem\'{a}ticas, Universidad Nacional Aut\'{o}noma de M\'{e}xico}
\email{saraiht@im.unam.mx}
\address{Technion -- Israel Institute of Technology}
\email{eviatarp@technion.ac.il }
\address{Technion -- Israel Institute of Technology}
\email{ron.ro@technion.ac.il }
\thanks{
SHT is supported by ISF grant 1692/17. EBP is supported by  BSF grant 2018330 and NSF grant DMS-1812009. RR is supported by BSF grant 2018330 and ISF grant 771/17. }
\begin{document}

\begin{abstract} 
	In $\mathbb{Z}^d$ with $d\ge 5$,
	we consider the time constant $\rho_u$ associated to the chemical distance in random interlacements at low intensity $u \ll 1$. We prove an upper bound of order $u^{-1/2}$ and a lower bound of order $u^{-1/2+\varepsilon}$. The upper bound agrees with the conjectured scale in which $u^{1/2}\rho_u$ converges to a constant multiple of the Euclidean norm, as $u\to 0$. Along the proof, we obtain a local lower bound on the chemical distance between the boundaries of two concentric boxes, which might be of independent interest. For both upper and lower bounds, the paper employs probabilistic bounds holding as $u\to 0$; these bounds can be relevant in future studies of the low-intensity geometry.
\end{abstract}
\maketitle
\tableofcontents

\section{Introduction}

Random interlacements on $\mathbb{Z}^d$, with $d \geq 3$, is a Poissonian soup of doubly infinite random walk paths. It is often used in the study of the local structure of a simple random walk covering a fixed proportion of the volume of a torus $(\mathbb{Z}/N\mathbb{Z})^d$, see e.g. \cite{procaccia2014range,Sznitman,BenjaminiSznitman2008}.
The model is defined as a Poisson point process of labelled doubly infinite trajectories, where each of the trajectories has the law of a doubly infinite simple random walk. 
A parameter $u > 0$ is the intensity of the point process, and as the intensity increases, more trajectories appear in the Poissonian soup, which we denote by $\RI^u$ (see Section \ref{sec:Background} for a precise definition). The range and edges in $\RI^u$ define the random interlacements graph $\cI^u$. The graph $\cI^u$ corresponds to a bond percolation of $\mathbb{Z}^d$ with long-range correlations. An edge $e = (x,y)$  of $\mathbb{Z}^d$ is open in $\cI^u$ if any of the trajectories in $\RI^u$ traverses $e$. In this case, we write $x \sim_u y$ and say that $x$ and $y$ are nearest neighbours in $\cI^u$.

The focus of this work is on the geometry of the interlacements graph $\mathcal{I}^u$.  Previous works  \cite{vcerny2012internal, DrewitzRathSapozhnikov, ProcacciaRosenthalSapozhnikov, ProcacciaTykesson, RathSapozhnikov, RathSapozhnikovQuenched,sapozhnikov2017random}  have shown that the metric properties of $\mathcal{I}^u$ are comparable to the geometry of $\mathbb{Z}^d$. It is conjectured that as $u \to 0$, the geometry of $~\mathcal{I}^u$ converges to the Euclidean geometry. We are interested in understanding this convergence and thus investigate the graph distance as  $u \to 0$.

In~\cite{Sznitman}, Sznitman proved that, for any intensity value $u > 0$, the graph $\cI^u$ is an infinite connected subset of $\mathbb{Z}^d$ almost surely. 
Procaccia and Tykesson, and independently R{\'a}th and Sapozhnikov, proved connectivity along the traversed edges of the interlacements in~\cite{ProcacciaTykesson,RathSapozhnikov2012}, showing that any two $x , y \in \cI^u$ are almost surely connected by at most $ \lceil d/2 \rceil $ trajectories in $\RI^u$. These results imply that $\cI^u$ is an infinite connected random subgraph of $\mathbb{Z}^d$.

The chemical distance in $\cI^u$ is defined as the intrinsic graph distance. For $u > 0$, we define the \textbf{chemical distance} $d_{u}: \cI^{u} \times \cI^{u} \to \mathbb{R}_+ $ in $\cI^{u}$ by 
\[
	d_{u} (x,y) \coloneqq \inf \Big\{ n \geq 0 \: : \: 
						\begin{array}{c} \text{ There exists } x_0, \ldots, x_n \in \mathcal{I}^{u} \text{ such that }  \\
							x_0 = x, x_n = y, \text{ and }	  x_{k-1} \sim_u x_{k} , \text{ for } k = 1, \ldots , n \end{array} 
								 \Big\}. 
\]

For a vertex $x \in \mathbb{Z}^d$, we define $[x]_u$ as the $\ell^1$-closest vertex in $\cI^u$ to $x$. In the case of a tie, we choose $[x]_u$ according to some fixed but arbitrary order. The only requirement for such order is invariance under symmetries of $\mathbb{Z}^d$, e.g.~\cite[Section IX A]{DrewitzRathSapozhnikov}.   
This definition extends the chemical distance to a function $d_u : \mathbb{Z}^d  \times \mathbb{Z}^d \to \mathbb{R}_+$ by setting $ d_u (x,y) \coloneqq d_u \left( [x]_u, [y]_u \right) $.

For each $x \in \mathbb{Z}^d $, the \textbf{time constant} $\rho_{u}$ in the direction of $x$ is the limit
\begin{equation}
	\rho_{u} (x) \coloneqq \lim_{n \to \infty} \frac{d_{u} ([0]_u,  [n \cdot x]_u) }{n}, \qquad \text{ a.s. and } L^1. 
\end{equation}

The convergence to the time constant is a classical consequence of the subadditive ergodic theorem (see~\cite{DrewitzRathSapozhnikov} for a proof in a general framework of certain long-range percolation models). Furthermore, the time constant is almost surely finite as a consequence of \v{C}ern{\'y} and Popov's shape theorem for the chemical distance in $\cI^u$, in~\cite{vcerny2012internal}. Remark~\ref{rmk: other_defs} indicates the need to be careful in the use of previous results on the random interlacements graph. In this case, results in~\cite{RathSapozhnikov} and arguments in~\cite[Appendix A]{vcerny2012internal} prove the necessary large deviation bound \cite[Theorem 1.3]{vcerny2012internal} for the chemical distance in $\cI^u$. 
The function $\rho_u$ has a natural homogeneous extension to the domain $\mathbb{Q}^d$, and then a unique continuous extension to $\mathbb{R}^d$. 
This extension defines a deterministic norm  $\rho_{u}$ on  $\mathbb{R}^d$.

In dimensions $d \geq 5$, we study the time constant of $\cI^u$ for low-intensity parameters $ u  < 1 $.
Our main results are upper and lower bounds for the time constant with respect to the Euclidean norm $\vert \cdot \vert_2$ on $\mathbb{R}^d$.

\begin{theorem} \label{thm:main_upper}
	Let $d \geq 5$.
	There exists a constant $\newCnt \label{c:up} = \Cnt{c:up}(d) \in (0, \infty)  $ such that for any $u\in (0,1)$
	\begin{equation} \label{eq:main_upper}
		\rho_{u} (x) \leq  \Cnt{c:up}\frac{1}{\sqrt{u}} \vert x \vert_2, \qquad \text{for every } x \in \mathbb{R}^d.
	\end{equation}
\end{theorem}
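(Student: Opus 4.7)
By the invariance of $\RI^u$ under the lattice symmetries of $\mathbb{Z}^d$ and the norm property of $\rho_{u}$, it suffices to prove $\rho_{u}(e_1) \leq C u^{-1/2}$ for some universal constant $C$; the bound in every direction then follows by homogeneity and the equivalence of norms on $\mathbb{R}^d$. Since $\rho_{u}(e_1) = \lim_{n \to \infty} d_{u}([0]_u, [n e_1]_u)/n$ almost surely and in $L^1$, it is enough to exhibit, with probability tending to $1$, a path in $\cI^u$ from $[0]_u$ to $[n e_1]_u$ of chemical length $O(n u^{-1/2})$, and to control the failure event via the shape theorem of \cite{vcerny2012internal} and its associated large deviation bound.

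The plan is a coarse-graining at the mesoscopic scale $\ell := \lfloor u^{-1/2} \rfloor$. Partition the segment from $0$ to $n e_1$ into $k = \lceil n / \ell \rceil$ consecutive overlapping boxes $B_0, \ldots, B_k$ of side $\ell$ centered on the line, arranged so that each intersection $B_{j-1} \cap B_j$ contains a subbox of side $\ell/4$. The number of trajectories of $\RI^u$ visiting any fixed $B_j$ is Poisson with parameter proportional to $u \cdot \capa(B_j) \asymp u \ell^{d-2} = u^{(4-d)/2}$, which diverges as $u \to 0$ once $d \geq 5$. By diffusive scaling of simple random walk in transient dimensions, the trace of each such trajectory inside $B_j$ has length $O(\ell^2) = O(u^{-1})$ with exponential tails. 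Choosing, for each $j$, a trajectory $\gamma_j \in \RI^u$ whose trace crosses $B_{j-1} \cap B_j$ and concatenating the resulting traces produces a candidate path of total chemical length $O(k \ell^2) = O(n u^{-1/2})$.

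The principal obstacle is the \emph{concatenability} of consecutive trajectories: one needs $\gamma_j$ and $\gamma_{j+1}$ to share a vertex in $\cI^u$ so that their traces join into a single connected path. Two independent simple random walks inside $B_j$ meet with probability only $\Theta(\ell^{4-d}) = \Theta(u^{(d-4)/2})$, which vanishes as $u \to 0$; nevertheless, the expected number of intersecting pairs among the $\sim u \ell^{d-2}$ candidate trajectories in a single box is $(u \ell^{d-2})^2 \cdot \ell^{4-d} = u^{2 - d/2}$, which for $d = 5$ equals $u^{-1/2}$ and diverges as $u \to 0$. A second-moment argument should then yield existence of intersecting crossing trajectories in a fixed box with probability uniformly bounded below in $u$. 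A union bound over the $k = O(n u^{1/2})$ boxes, combined with the near-independence of $\RI^u$ on disjoint regions, extends this to all boxes simultaneously; the connectivity result of \cite{ProcacciaTykesson,RathSapozhnikov2012} can be used inside a box to stabilize the intersection into a short connecting path of uniformly bounded additional length.

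Combining the high-probability construction with the deterministic fallback provided by the almost-sure finiteness and exponential large deviation bound of $d_{u}$ yields $\mathbb{E}[d_{u}([0]_u, [n e_1]_u)] \leq C n u^{-1/2}$ for all sufficiently large $n$, and dividing by $n$ and passing to the limit gives $\rho_{u}(e_1) \leq C u^{-1/2}$, from which the full statement follows.
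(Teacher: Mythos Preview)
Your strategy matches the paper's in broad outline (renormalize at scale $\ell \asymp u^{-1/2}$, chain intersecting trajectories across adjacent boxes), and your intersection count $(u\ell^{d-2})^2 \ell^{4-d} = u^{(4-d)/2} \to \infty$ is correct. But there is a real gap at the step ``a union bound over the $k = O(n\sqrt{u})$ boxes \ldots\ extends this to all boxes simultaneously.'' Even if each box succeeds with probability $1-p(u)$ for some $p(u) \in (0,1)$, this $p$ does not decay in $n$; a union bound over $k \to \infty$ boxes gives $kp \to \infty$, and near-independence only gives $\approx (1-p)^k \to 0$ for the joint success probability. Your one-dimensional chain of boxes therefore breaks almost surely for every fixed $u$ as $n \to \infty$, and no positive-probability or $L^1$ bound follows. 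A second-moment argument yields a per-box success probability bounded away from $0$, not close to $1$, so it does not rescue this step; and the ``deterministic fallback'' from the shape theorem gives only $d_u \leq C(u)n$ with an unquantified $C(u)$, so it cannot supply the missing $u^{-1/2}$ either.

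The paper resolves this by embedding the chain in a two-dimensional slab and running an exploration process on the $*$-connected renormalized lattice that routes \emph{around} failed boxes. Two further ingredients are essential. First, the per-connection failure probability is upgraded from merely $<1$ to $\leq e^{-\eta\alpha^2}$ for a tunable parameter $\alpha$, via Poisson thinning rather than a second-moment bound: the number of ``good'' trajectories at a neighbouring box that hit a fixed good trajectory is itself Poisson of mean $\asymp\alpha^2$. Second, independence between different boxes is enforced by requiring good trajectories to escape the slab into the transient transverse $(d-2)$-dimensional directions (this is where $d\geq 5$ is used). A Peierls argument on the 2D lattice then bounds the total failed region by $O(n)$ with probability at least $1/2$, yielding an arrow path through $O(n)$ boxes and hence chemical distance $O(nR_u^2)$ between points at Euclidean distance $\asymp nR_u$.
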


\begin{theorem} \label{thm:main_lower}
	Let $d \geq 5$. There exists a constant  $  \newCnt = \Cnt{c:low} (d) \in(0, \infty) \label{c:low} $ such that the following holds.
	For every $\varepsilon > 0$, there exists $\newuBound  \label{u:Thm} = \uBound{u:Thm} (\varepsilon)\in (0,1)$  so that for any $  u\in (0, \uBound{u:Thm})$
	\begin{equation} \label{eq:main_lower}
		\Cnt{c:low} \frac{u^{\varepsilon}}{\sqrt{u}} \vert x \vert_2  \leq \rho_{u} (x)   , \qquad \text{for every } x \in \mathbb{R}^d.
	\end{equation}
\end{theorem}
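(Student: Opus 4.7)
The plan is to establish a local lower bound on the chemical distance inside a mesoscopic box and to iterate this bound along the segment $[0, nx]$ via decoupling.

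Fix $\varepsilon>0$ and pick a mesoscopic scale $L=L(u,\varepsilon)\to\infty$ as $u\to 0$ (e.g.\ $L=u^{-\alpha}$ for a suitable $\alpha=\alpha(\varepsilon)$). The key estimate I would prove is that, with probability tending to $1$ as $u\to 0$,
\[
d_{u}\bigl(\partial B(L/2),\,\partial B(L)\bigr)\;\geq\;\Cnt{c:low}\,L\,u^{\varepsilon}/\sqrt{u}.
\]
This would follow by a union bound over self-avoiding paths $\gamma$ of length $k\leq\Cnt{c:low}\,L\,u^{\varepsilon}/\sqrt{u}$ in the annulus. For a fixed $\gamma$, the event $\{\gamma\subset\cI^{u}\}$ requires its $k$ edges to be covered by trajectories of $\RI^{u}$; decomposing this cover into maximal consecutive trajectory segments and using capacity / Green's-function bounds on the probability that a simple walk visits $m+1$ prescribed consecutive vertices of $\gamma$, one controls $\prob(\gamma\subset\cI^{u})$ in terms of the partition of $\gamma$ into covering pieces. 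The hypothesis $d\geq 5$ enters through the (near-)non-intersection of independent random walks, which drastically limits the combinatorics of trajectory assignments. Summing over partitions and combining with the connective-constant bound $(2d-1)^{k}$ for self-avoiding walks and the polynomial count $|\partial B(L/2)|$ of starting vertices would then yield the required decay.

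For the global bound, I would tile the Euclidean segment from $0$ to $nx$ by $N\asymp n|x|_{2}/L$ disjoint translates $A_{1},\dots,A_{N}$ of the annulus $B(L)\setminus B(L/2)$. Any chemical path from $[0]_{u}$ to $[nx]_{u}$ must cross each $A_{i}$ and so contributes at least $\Cnt{c:low}Lu^{\varepsilon}/\sqrt{u}$ whenever the local good event holds in $A_{i}$. Sznitman-type decoupling inequalities for $\RI^{u}$ render the local events on well-separated $A_{i}$ effectively independent up to sprinkling, so a Borel--Cantelli argument along $n\to\infty$ gives that a positive fraction of the $A_{i}$ are good almost surely. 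Summation then yields $d_{u}([0]_{u},[nx]_{u})\geq c\,n|x|_{2}\,u^{\varepsilon}/\sqrt{u}$ for all large $n$, whence the claim after dividing by $n$ and passing to the limit defining $\rho_{u}(x)$.

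The main obstacle is the local lower bound. For short paths ($k\ll L$), a single trajectory can cover a stretch of $\gamma$ whose Euclidean span is comparable to $L$, introducing long-range dependencies that resist a direct union bound. The factor $u^{\varepsilon}$ is precisely the slack that absorbs the entropic loss in enumerating path-to-trajectory assignments; removing it to match the conjectured scale $1/\sqrt{u}$ would require a more delicate multi-scale argument and is beyond the scope of what a first-moment bound can deliver.
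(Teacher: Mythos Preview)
Your high-level architecture---a local lower bound in a mesoscopic annulus, then propagation along the segment via decoupling---matches the paper's. However, both components differ substantially from the paper's execution, and the local step as you describe it has a genuine gap.

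\textbf{The local bound.} You propose a Peierls argument: sum over self-avoiding paths $\gamma$ of length $k$ crossing the annulus and bound $\prob(\gamma\subset\cI^u)$. The difficulty you yourself flag is real and, in this form, fatal. The relevant lengths are $k\asymp L^2$ (diffusive crossing), so the entropy is $(2d-1)^{L^2}$, and you would need $\prob(\gamma\subset\cI^u)\le (2d-1)^{-k}$ \emph{uniformly} in $\gamma$. But for $\gamma$ that resemble random-walk traces, a single interlacement trajectory can cover $\gamma$ with probability that does not decay at that rate once you account for all the ways a walk can traverse the edges of $\gamma$ (with backtracks, multiple visits, etc.). Decomposing into ``maximal trajectory segments'' does not obviously help: the number of partitions of $\gamma$ into pieces is itself exponential in $k$, and the per-piece probability is not uniformly controllable without knowing the geometry of the piece. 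No bound of the required strength is supplied, and none is known.

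The paper avoids path-counting entirely. It shows (Proposition~\ref{prop:annulus}) that, with $L_u=u^{-1/2+2\varepsilon}$, the chemical distance across the $L_u$-ring is at least $c\,u^{2\varepsilon}L_u^2$ with probability $\to 1$. The mechanism is structural: (i) the trajectories hitting $B(2L_u)$ form clusters whose sizes are dominated by a subcritical branching process (capacity of a single trajectory in the ring is $O(u^{-2\varepsilon}L_u^2)$, so the expected offspring is $O(u^{2\varepsilon})\ll 1$); (ii) pairwise intersection counts between trajectories are small in $d\ge 5$; (iii) by pigeonhole over $u^{-2\varepsilon/3}$ concentric sub-annuli, each cluster has a sub-annulus in which its trajectories do not meet; (iv) within that sub-annulus, a density-of-cut-points argument (Proposition~\ref{prop:cutdensity}) forces every trajectory crossing to have chemical length comparable to its time-length, which is $\asymp(\text{width})^2$. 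This replaces your union bound over paths with control of the \emph{cluster geometry}.

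\textbf{The global step.} Tiling by disjoint annuli and invoking decoupling plus Borel--Cantelli is morally right, but the local error in Proposition~\ref{prop:annulus} is only $o(1)$ as $u\to 0$---polynomial, not stretched-exponential---so one cannot directly sum over $n$ translates. The paper resolves this with a multi-scale renormalization (Section~\ref{subsec: multiscale-renormalization}): the local good event is declared ``$0$-good'', and $k$-good boxes are defined recursively via the decoupling inequality with sprinkling (Theorem~\ref{thm:RI_decorrelation}), yielding $\prob(\text{$k$-bad})\le 2^{-2^k}$. A geometric lemma then guarantees that any path across a $k$-good box must cross a positive density of $0$-good sub-boxes, each contributing the local lower bound. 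Your one-scale scheme would need a version of decoupling for $N\asymp n$ events simultaneously, which is precisely what the multi-scale hierarchy provides.
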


\begin{remark}
An intuitive explanation for the $\frac{1}{\sqrt{u}}$ scaling for $d\ge 5$ is the following. The capacity along a random walk path grows linearly with the run time. At intensity $u$, the expected number of paths in the random interlacements intersecting a random walk running $t$ steps is of order $ut$. Thus one needs to take $t=\frac{1}{u}$ to get order unity. Next, note that the Euclidean distance a random walk traverses after $\frac{1}{u}$ steps is of order $\frac{1}{\sqrt{u}}$.

As for dimensions $d=3,4$, the arguments in this paper fail, but we believe that, for $d=4$, the results should hold with the same power $1/2$. For $d=3$, we expect a different power, smaller than $1/2$, due to multiple intersections of the paths on all scales. However, we have no concrete conjecture for the exact power. Note that even for the chemical distance along a three-dimensional simple random walk, there is no explicit value for the power \cite{shiraishi2018growth}. 
\end{remark}

\begin{remark} \label{rmk: other_defs}
	The time constant $\rho_{u}$ is the asymptotic norm of the chemical distance in $\cI^u$.
	In this work, we have introduced $\cI^u$ as the graph whose vertices and edges are the ones traversed by the random interlacements $\RI^{u}$ (see~\eqref{eq:RI-def} below for a formal definition).
	This graph has been studied in~\cite{RathSapozhnikov}.
	However, previous works on random interlacements have used a different definition for the associated graph. 
	In \cite{vcerny2012internal,DrewitzRathSapozhnikov,Sznitman}, the authors focus on the interlacements set, which is the set of vertices $\hat{\cI^{u}}$ in the range of the trajectories of the random interlacements. 
	Then they analyse the chemical distance $\hat{d}_{u}$ associated to the subgraph of $\mathbb{Z}^d$ induced by $\hat{\cI^{u}}$. That is, any pair of vertices $x,y\in\hat{\cI^{u}}$ such that $|x-y|_1=1$ are connected, regardless of whether or not the edge $(x,y)$ was crossed by a trajectory in $\RI^{u}$. 
	While the sets of vertices in  the induced subgraph $\hat{\cI^{u}}$ and the graph $\cI^u$ are equal, they  differ in their sets of edges. We note that $ \trace ( \cI^u )  \subseteq \trace ( \hat{\cI^u} )$ and from this relation we obtain $ \hat{d}_{u} (x,y) \leq {d}_{u} (x,y)$, for every $x , y \in \hat{\cI^{u}}$. 
	It follows that the upper bound in Theorem~\ref{eq:main_upper} holds for the asymptotic norm of $ \hat{d}_{u}$. 
	Our arguments should lead to the corresponding lower bound for $\hat{\cI^{u}}$ as in Theorem~\ref{thm:main_lower}. We do not include the proof of this case to avoid further technicalities and highlight the novel ideas of our arguments.  
\end{remark}


As discussed above, \v{C}ern{\'y} and Popov proved in~\cite{vcerny2012internal} that the chemical distance $d_u$ satisfies a shape theorem. The limit shape is the closed unit ball in the $\rho_u$ norm, namely
\[
		D^u \coloneqq \{ x \in \mathbb{R}^d \st  \rho_u (x) \leq 1 \}.
\]

The emergence of a limit shape is common to several models in probability. The first  shape theorem was proved for the Richardson's growth model~\cite{Richardson1973}. This prototypical theorem was generalized to first-passage percolation by Cox and Durrett~\cite{CoxDurrett} and  Kesten~\cite{Kesten1993}. Shape theorems have also been proved for the frog model~\cite{AlvesMachadoPopov}, the spread of an epidemic~\cite{KestenSidoravicius2008}, the orthant model~\cite{HolmesSalisbury}, and a family of percolation models with long-range correlations~\cite{andres2021first,boivin1990first, DrewitzRathSapozhnikov}, to cite some examples.
In every instance, understanding the geometry of the limit shape is a fundamental question but challenging for some models (see~\cite{AuffingerDamronHanson} for a survey on the progress for first-passage percolation). 
Some geometric understanding has been achieved via the ``Wulff construction program'' by presenting the limit shape as a ball in some deterministic norm, e.g. super-critical percolation \cite{alexander1990wulff,biskup2015isoperimetry,Gold2018}, Ising model \cite{Cerf2006,CerfPisztora,dobrushin1992wulff,ioffe1998dobrushin} and self-interacting random walks \cite{berestycki2019condensation, biskup2018eigenvalue}. 
In the case of random interlacements, there are some immediate observations on the limit shape $D^u$ and a standing conjecture.

When $u \to \infty$, the range of random interlacements monotonically converges to $\mathbb{Z}^d$. Therefore it is easy to verify that $D^u$ converges to the closed $\ell^1$-ball in the Hausdorff metric for compact subsets of $\mathbb{R}^d$. This result means that in the high-intensity regime, the chemical distance for random interlacements behaves as the graph distance of $\mathbb{Z}^d$, and hence in the scaling limit, as  $  \vert \cdot \vert_1 $ on $\mathbb{R}^d$.

In the low-intensity regime, a conjecture in~\cite{vcerny2012internal} states that $D^u$ (when scaled appropriately) converges to the Euclidean unit ball as $u \to 0$.  
Within this context, Theorems~\ref{thm:main_upper} and~\ref{thm:main_lower} indicate the correct scaling to obtain such convergence.  
It would thus be interesting to improve Theorem \ref{thm:main_lower} to match the upper bound in Theorems~\ref{thm:main_upper} and show that there is some constant $c>0$ such that
\[
		\lim_{u\to 0}c\sqrt{u}\rho_u(x)=|x|_2, \quad \text{ for every } x \in \mathbb{R}^d.
\]
Note that in the context of Bernoulli percolation, the scaling of the chemical distance in the limit $p\downarrow p_c(\mathbb{Z}^d)$ is unknown, though a similar conjecture of convergence to the Euclidean norm holds~\cite{Benjamini2013,biskup2015isoperimetry,duminil2013limit}. 

\subsection{Proof strategy}
\subsubsection{{Theorem~\ref{thm:main_upper} (upper bound)}}
We renormalize $\mathbb{Z}^d$ into boxes of size of order $u^{-1/2}$. We show uniformly, as $u\to0$, that there is a high probability for two adjacent boxes to include trajectories that intersect within $u^{-1}$ steps. We then construct a path connecting far away points via an exploration process that follows these local connections. Here we require $d\ge 5$ to ensure sufficient independence between different local connections. 

\subsubsection{{Theorem~\ref{thm:main_lower} (lower bound)}}
Providing lower bounds on the chemical distance  is more delicate since one needs to control all possible paths connecting far away points. Then the main component of the proof is a local lower bound on the chemical distance between the boundaries of two concentric boxes of radii proportional to $L_u:=u^{-1/2+4\varepsilon}$
(see Proposition \ref{prop:annulus}). 
A single random walk path in $d\ge 5$ would cross the annulus defined by the boxes in $L_u^2$ steps.  Since the capacity of a single path in the annulus is of order $u L_u^2=u^{8\varepsilon}$, most trajectories in the random interlacements will not have intersections with other trajectories while crossing the annulus. However, we require a uniform bound on the chemical distance, and the proof proceeds via stochastic domination by a sub-critical branching
process. This branching process dominates the number of trajectories in each cluster (the connected components in the annulus).
We then bound the chemical distance in each cluster. Subsection~\ref{subsec: chemDist ring} presents an extended sketch of the proof of the bound for each cluster.

Once we obtain local control on the chemical distance, the proof proceeds via a multi-scale renormalization argument.  
We call a box ``good'' if it satisfies the local lower bound on the chemical distance.
Using renormalization, we prove that any sufficiently long path must cross enough good boxes. Each of these good boxes contributes an order of $L_u^2$ to the chemical distance, thus leading to the stated lower bound in Theorem \ref{thm:main_lower}. Note that in most applications of a renormalization argument, there is some relationship (e.g. connectivity) between adjacent good boxes. Our use of the multi-scale renormalization is simpler since we do not have a connectivity assumption, but we only need the density of good boxes to be sufficiently high.


\subsection{Organization of the paper}

Section~\ref{sec:Background} summarizes background on random interlacements and essential properties of simple random walks in $d \geq 5$.
In Section~\ref{sec:capacity}, we provide concentration results on the number of cut-points along the range of a random walk in dimension $d \geq 5$, as well as capacity estimates.
The proof of Theorem~\ref{thm:main_upper} is in Section~\ref{sec:upper_bound}. The proof of Theorem~\ref{thm:main_lower} is the content of Sections~\ref{sec:local_lower_bound} and~\ref{sec: lower bound}. In Section~\ref{sec:local_lower_bound}, we provide a local lower bound. Section~\ref{sec: lower bound} completes the proof with a multi-scale renormalization argument.

\section{Background and notation} \label{sec:Background}

\subsection{Sets and norms}

We denote the $l^1$, $l^2$ and $l^{\infty}$ norms on $\mathbb{R}^d$ by $ \vert \cdot \vert_1 $, $\vert \cdot \vert_2$ and $\vert \cdot \vert_{\infty}$, respectively.

The graph $\mathbb{Z}^d$ is defined by the set of vertices $ \{ (x_1, \ldots, x_d) \st x_i \in \mathbb{Z} \text{ for } i = 1, \ldots d \} $ and the set of \emph{nearest-neighbour edges} $E(\mathbb{Z}^d) \coloneqq \{ (x,y) \in \mathbb{Z}^d  \times \mathbb{Z}^d \st  \vert x - y \vert_1 = 1 \} $. We often denote the set of vertices by $\mathbb{Z}^d$, and write $x \sim y$ when $(x , y ) \in E(\mathbb{Z}^d) $.
If $x = (x_1, \ldots , x_d) \in \mathbb{Z}^d$ and $r > 0$, let
\[
	r \cdot x \coloneqq \left( [ r\cdot x_1], \ldots,  [r \cdot x_d ] \right),
\]
where in each coordinate we take the integer part of the product.

For $z \in \mathbb{Z}^d$ and $r > 0$ we denote the half-closed discrete box by
\begin{equation} \label{eq: box}
		B_z (r) \coloneqq z + [- r , r)^d \cap \mathbb{Z}^d,
\end{equation}
and write $ B (r) \coloneqq B_0 (r) $.

Let $K \subset \mathbb{Z}^d$. The \emph{inner vertex boundary} of $K$ is the set of vertices
\[
	\partial_{i} K \coloneqq  \{  x \in K \st  x \sim y \text{ for some } y \in \mathbb{Z}^d \setminus K \},
\]
and the \emph{outer vertex boundary} of $K$ is 
\[
	\partial K \coloneqq \{ x \in \mathbb{Z}^d \setminus K \st  x \sim y \text{ for some } x \in K \}.
\]
The \emph{edge boundary} of $K$, denoted by $\partial_{e} K$, is the set of edges that connect $K$ with its complement $\mathbb{Z}^d \setminus K$.

\subsection{Spaces of trajectories}\label{subsec_trajec}

For $-\infty\leq n< m\leq \infty$ define $[n,m]_ \mathbb{Z} = [n,m]\cap\mathbb{Z}$. Whenever $n$ or $m$ are infinite, the interval does not include infinity.  A \textbf{path} in $\mathbb{Z}^d $ is a function 
 \[  w : [  n,  m ]_{\mathbb{Z}}   \to  \mathbb{Z}^d \]
such that $\vert w (k) - w(k+1) \vert_1 = 1$ for all $k\in [n,m-1]_\mathbb{Z}$ and some $ -\infty \leq n < m \leq \infty$. 
The path $w$ may be either finite ($-\infty< n , m <\infty$), infinite (e.g. $n >-\infty$ and $m = \infty$),  or doubly-infinite ($n=-\infty$ and $m = \infty$).

The length of a finite  path $w$ is $ \len (w) = m - n $, otherwise we set $\len (w) = \infty$.
For a path $w$, we write $\sPath (w) = n$ and $\ePath (w) = m$. When these are finite, they represent the first and last time-indices of the path.
A path $w$ is \emph{transient} if $w^{-1} (x)$ is a finite set for every $x \in \mathbb{Z}^d$, meaning that $w$ visits any vertex a finite number of times.
We often define a path $w : \{ 0, \ldots ,m \} \to \mathbb{Z}^d$  by specifying a sequence of nearest-neighbour vertices in $\mathbb{Z}^d$, e.g. $ w = [ w(0), \ldots, w(m) ] $.

For $ -\infty \le n <  m \le \infty$, let  $\mathcal{W} [n,m] $ be the collection of transient paths of the form $ w : [n,m]_\mathbb{Z} \to \mathbb{Z}^d $.
We define the \emph{spaces of doubly-infinite paths and the space of finite paths}, respectively, by 
\[  
	\mathcal{W} \coloneqq \mathcal{W}[-\infty,\infty] , \qquad 	\mathcal{W}_f \coloneqq \bigcup_{ -\infty < n \leq m < \infty } \mathcal{W}[n,m].
\]
Note that  $\mathcal{W} [0, \infty]$ is the \emph{space of (one-sided) infinite paths}.
We equip each of the spaces above with the topology generated by the coordinate maps and the associated Borel $\sigma$-algebra.

Given a path  $w \in \mathcal{W}[n,m]$ (finite or infinite) with $n>-\infty$ and $ n \leq  T < m $, we define $w\vert_{\ge T}\in \mathcal{W}[0,m-T]$, by $w\vert_{\ge T}(i)=w(i-T)$ for all $i\in [0,m-T]_\mathbb{Z}$.

For $k \in \mathbb{Z}$, let  $\theta_k :  \mathcal{W} \to  \mathcal{W}$ be the time-shift function defined by $ \theta_k (w) (i) := w(i+k)$ for all $i\in\mathbb{Z}$.

We define an equivalence relation on paths, by saying that
$w_1 \sim w_2$ if $w_1 = \theta_k (w_2)$ for some $k \in \mathbb{Z}$. 
Let $\mathcal{W}^*$ be the quotient space $\mathcal{W} / \sim$ with quotient map $\pi : \mathcal{W} \to \mathcal{W}^*$. 
We endow $\mathcal{W}^*$ with the $\sigma$-algebra generated by the quotient topology. 
Let $w^* \in \mathcal{W}^*$ be a trajectory and let $ w \in \mathcal{W}$ be a class representative in $\pi^{-1} (w^{*})$.
The \emph{range} of a trajectory $w^*$ (and each of its representatives) is defined as
\[
	\range (w^*)  \coloneqq \range (w) \coloneqq \bigcup_{  j \in [\sPath (w) ,  \ePath (w)]_{\mathbb{Z}}  } w(j), 
\]
and the \emph{trace} of $w^*$ (and each of its representatives) is the union of edges traversed by the walk
\[
	\trace (w^*)  \coloneqq \trace (w) \coloneqq \bigcup_{ j\in [\sPath (w) ,  \ePath (w)-1]_{\mathbb{Z}} } (w(j),w( j + 1)),
\]
and $\len (w^*) = \len (w)$.
Note that the range, the trace and the length of a path are properties of its equivalence class, so they are well-defined for a trajectory.

Let $K$ be a finite subset of $\mathbb{Z}^d$. For a path $w \in \mathcal{W} [n,m]$, we refer to the stopping time
\begin{equation} \label{eq: w-entrance}
	H_K (w) \coloneqq \inf \{j\in [n,m]_\mathbb{Z} \st  w(j) \in K \} 
\end{equation}
as the \emph{first entrance time} of $K$; and to the stopping time
\begin{equation} \label{eq: w-Positivehitting}
	\tilde{H}_K (w) \coloneqq \inf \{j\in [n+1,m]_\mathbb{Z} \st w(j) \in K \}
\end{equation}
as the \emph{first hitting time} of $K$, where in both cases we use the convention that the infimum of the empty set is infinity. Note that both stopping times are always well defined since all paths are transient. When it is clear from the context, we simply write $H_K$ and $\tilde{H}_K$ (without reference to $w$).

We denote the collection of doubly-infinite paths intersecting $K$ by 
\[ 
		\mathcal{W}_K  \coloneqq \{ w \in \mathcal{W} \st H_K (w) < \infty \},
\]
and $\mathcal{W}_K^* \coloneqq \pi (\mathcal{W}_K)$ is the corresponding collection of trajectories. 
We will consider paths in $\mathcal{W}_K$ starting from their first entrance to $K$. We thus define
\[
		\vec{\mathcal{W}}_K  \coloneqq  \{ w \in \mathcal{W} [0, \infty] \st w(0) \in K \}.
\]
For a path $w \in \mathcal{W}_K$, its sub-path starting at $K$ is $w^0_K \coloneqq w \vert_{\geq H_K}  \in \vec{\mathcal{W}}_K$. 
With this definition, we obtain a map $  $ 
\begin{equation} \label{eq: start function}
		\vec{s}_K : \mathcal{W}_K \rightarrow  \vec{\mathcal{W}}_K, \quad \vec{s}_K (w)= w^0_K .
\end{equation} 
Note that the corresponding map for the space of trajectories $\vec{s}^{\, *}_K \coloneqq \vec{s}_K \, \circ \, \pi^{-1} :  \mathcal{W}^*_K \rightarrow \vec{\mathcal{W}}_K $ is well-defined.

\subsection{Random walk, capacity and equilibrium measure}

A \emph{simple random walk} starting at $x$ is a stochastic process $X = (X(n))_{n \geq 0}$, taking values in the set of vertices $\mathbb{Z}^d$, with law
$\prw_x$ which satisfies $\prw_x (X (0) = x) = 1$ and
\[
	\prw_x \left( X  (n + 1) =  x \, \vert \, X (n)  = y \right) = \frac{1}{2d} \ind{x \sim y}.
\]

The process $X$ defines an infinite path $[X(0), X(1), \ldots] \in \mathcal{W} [0, \infty]$, which we also call a simple random walk. Thus $\prw_x$ can also be considered as a measure on $\mathcal{W}[0,\infty]$. For a simple random walk $X$, we often write $ X [0, n] = [X(0), \ldots , X(n) ]\in\mathcal{W}[0,n]  $ to indicate the path up to time $n$.

The \emph{equilibrium measure}  of a finite set $K \subset \mathbb{Z}^d$ is defined by
\begin{equation} \label{eq: equilibrium measure}
	e_K (x) \coloneqq \prw_x \left( \tilde{H}_K = \infty \right) \ind{x \in K}, \quad \text{ for } x \in \mathbb{Z}^d; 
\end{equation}
and its \emph{capacity} by \[ \capa (K) \coloneqq \sum_{x \in K} e_K (x) .\] 
The \emph{normalized equilibrium measure} $\tilde{e}_K$, defined by 
\begin{equation} \label{eq: normalized equilibrium measure}
	\tilde{e}_K (x) \coloneqq \frac{e_K (x)}{\capa (K)}, \qquad x \in K
\end{equation}
is then a probability measure on $K$.

When the set $K$ is a ball, its capacity is well-known. 

\begin{prop}[{\cite[Proposition 6.5.2]{LawlerLimic}}] \label{prop:capa_ball}
	There exist constants $ \newcnt \label{c:capL} =  \cnt{c:capL}(d)> 0  $ and  $ \newcnt \label{c:capU} =  \cnt{c:capU}(d)> 0  $ such that
	\[
     \cnt{c:capL} \cdot r^{d-2}  \leq  \capa (B_z (r)) \leq \cnt{c:capU} \cdot r^{d-2} .
	\] 
\end{prop}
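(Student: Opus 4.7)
The plan is to reduce the proposition to classical Green's function estimates for simple random walk in $\mathbb{Z}^d$, $d\geq 3$. Recall that the Green's function $G(x,y) = \sum_{n\geq 0}\prw_x(X_n=y)$ satisfies $G(x,y)\asymp |x-y|_2^{-(d-2)}$, and that from the last-exit decomposition one has the identity
\[
	\sum_{y\in K} G(x,y)\, e_K(y) \;=\; \prw_x(H_K<\infty),\qquad x\in\mathbb{Z}^d,
\]
which equals $1$ whenever $x\in K$. These two ingredients will immediately yield the matching bounds on $\capa(B_z(r))$.

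For the upper bound, I would apply the identity at the center $x=z$ of the ball $K=B_z(r)$. Every $y\in B_z(r)$ satisfies $|z-y|_2\leq \sqrt{d}\,r$, so the Green's function lower bound gives $G(z,y)\geq c(d)\, r^{-(d-2)}$. Hence
\[
	1 \;=\; \sum_{y\in B_z(r)} G(z,y)\, e_{B_z(r)}(y) \;\geq\; c(d)\, r^{-(d-2)} \capa(B_z(r)),
\]
which rearranges to $\capa(B_z(r))\leq \cnt{c:capU}\, r^{d-2}$.

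For the lower bound, I would use the variational characterization
\[
	\capa(K)^{-1} \;=\; \inf_{\mu}\sum_{x,y\in K} G(x,y)\,\mu(x)\mu(y),
\]
taken over probability measures $\mu$ on $K$, and plug in the uniform measure on $B_z(r)$. The Green's function upper bound, combined with summation over spherical shells, yields
\[
	\max_{x\in B_z(r)}\sum_{y\in B_z(r)} G(x,y) \;\leq\; \sum_{k=0}^{2r} k^{d-1}\cdot C\, k^{-(d-2)} \;\leq\; C'r^{2}.
\]
Since $|B_z(r)|\asymp r^d$, plugging into the variational formula gives $\capa(B_z(r))^{-1}\leq C'r^{2}/|B_z(r)|\leq C''r^{2-d}$, and hence $\capa(B_z(r))\geq \cnt{c:capL}\, r^{d-2}$.

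There is no real obstacle once the Green's function asymptotics are granted: both directions reduce to summing $|x-y|^{-(d-2)}$ over a ball and comparing with $r^2$. The only delicate input is the two-sided estimate on $G$ itself, which is a classical consequence of the local central limit theorem for simple random walk in transient dimensions (or a direct Fourier computation), and is the reason one cites Lawler–Limic's book rather than redoing the computation here.
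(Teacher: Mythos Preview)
Your argument is correct and is essentially the standard proof of this classical fact. Note, however, that the paper does not supply its own proof of this proposition: it is stated with a citation to Lawler--Limic and used as a black box. Your write-up recovers exactly the kind of argument one finds in that reference (last-exit decomposition plus Green's function asymptotics for the upper bound, the Dirichlet/variational formula with the uniform measure for the lower bound), so there is nothing to compare against in the paper itself.
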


For a finite set $K \subset \mathbb{Z}^d$, define the $\sigma$-finite measure $Q_K$ on $\mathcal{W}$ as the measure satisfying 
 \[ 
 Q_K \left( (w_{-n})_{n \geq 0} \in A, \, w_0 = x, \, (w_{n})_{n \geq 0} \in B \right)
 = 
 	\prw_x \left(  A \, \vert \, \tilde{H}_K = \infty \right) \cdot  
 	e_K (x) \cdot 
 	\prw_x \left(  B \right)
 \]
for every Borel sets $A , B \subset \mathcal{W}[0,\infty]$.
The existence of $Q_K$ was verified in~\cite{Sznitman}, together with the existence of a pull-back measure $\nu$ on the space of doubly infinite trajectories $\mathcal{W}^* $ satisfying for every finite $K\subset \mathbb{Z}^d$
\begin{equation} \label{eq: measure inter}
	\nu (A ) \coloneqq Q_K (\pi^{-1} (A^*))
\end{equation}
for every Borel set $A^*\subset\mathcal{W}^*_K$. The measure $\nu$ is used below to define random interlacements.

For a probability measure $\mu$ supported on the finite subset $K \subset \mathbb{Z}^d$, define
\[
	\prw_{\mu} \coloneqq \sum_{x \in K} \mu (x) \prw_x
\]
to be the law of a simple random walk with initial probability distribution $\mu$. 
For random interlacements, the measure $\prw_{\tilde{e}_K }$ has a role in its local construction (see~\eqref{eq: local-forward measure} below).

\subsection{Random interlacements}  \label{subsection:RI}

Consider the $\sigma$-finite measure $\nu \otimes \Lambda$ on the space of labelled doubly-infinite trajectories $\mathcal{W}^* \times \mathbb{R}_{+}$. Here $\Lambda$ is the Lebesgue measure on $\mathbb{R}_{+}$ and $\nu$ is the measure defined in~\eqref{eq: measure inter}. 
The \emph{random interlacements process}, denoted by $\RI $,  is the Poisson point process on $ \mathcal{W}^* \times \mathbb{R}_{+}$ with intensity measure $ \nu \otimes \Lambda$. For a realization of the random interlacements process
\begin{equation} \label{def: RI} 
	\RI = \sum_{n \geq 0} \delta_{(w_n^*, u_n)},
\end{equation} 
where $(w_n^*, u_n) \in \mathcal{W}^* \times \mathbb{R}_{+}$ for each $n \geq 0$, the
\emph{random interlacements at level $u$} is the random point measure on the space $\mathcal{W}^* \times\mathbb{R}_{+}$ defined by
\[
	\RI^u \coloneqq \sum_{u_n \leq u} \delta_{(w_n^*, u_n)}.
\]
The support $ \supp (\RI^{u}) $ is the collection of trajectories in $\RI$ at level $u$.

The  \emph{random interlacements graph} $\cI^{u}$ (at density level $u$) is the subgraph of $\mathbb{Z}^d$ with 
\begin{equation} \label{eq:RI-def}
	\bigcup_{ u_n \leq u } \range (w_n^*)\qquad, \qquad  \bigcup_{ u_n \leq u } \trace (w_n^*)
\end{equation}
as its set of vertices and edges, respectively. 
With this definition, the edge set corresponds to those edges  traversed by trajectories in the support of $\RI^u$. 
We write $\prob^{u}$ for the law of $\cI^{u}$ on the probability space $\{ 0,1 \}^{E} $, equipped with the $\sigma$-algebra $\mathcal{F}$ generated by the canonical coordinate maps
\begin{equation} \label{eq:coordmaps}
	\Psi_{e} : \{ 0, 1 \}^E \to \{ 0, 1 \}, \qquad e \in E
.
\end{equation}
where $ \Psi_e ( \zeta  ) = \zeta_e $ for $ \zeta \in \{ 0,1 \}^{E} $.

In the multi-scale renormalization argument, we will use two well-known properties of random interlacements: stochastic monotonicity in $u$ and decorrelation for monotone events, that we now recall.

An event $G \in \mathcal{F}$ is called \emph{increasing} (respectively, \emph{decreasing}) if for all $\zeta \in G$ and $ \hat{\zeta}  \in  \{ 0, 1 \}^E  $ satisfying 
$ \zeta_e \leq \hat{\zeta}_e $ (respectively $ \zeta_e \geq \hat{\zeta}_e $) for all $e \in E$ it holds that $ \hat{\zeta} \in G $. The events $(G_i)_{i \geq 1}$ are \emph{monotone} if either all $G_i$ are increasing, or all $G_i$ are decreasing events.

The random interlacements measure preserves the monotonicity of events, and this property is known as stochastic monotonicity. It is an immediate consequence of the definition of the random interlacements graph in~\eqref{eq:RI-def}.

\begin{prop} \label{prop:RI_monotone}
	Let $ \hat{u} > u > 0$, then  $\prob^u (G) \leq \prob^{\hat{u}} (G)$  for any increasing event $G \in \mathcal{F}$. 
\end{prop}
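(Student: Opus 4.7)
The proof should proceed via the natural monotone coupling coming directly from the Poissonian construction. My plan is to couple $\cI^u$ and $\cI^{\hat u}$ on a single probability space by constructing both from one realization of the random interlacements point process $\RI = \sum_{n\ge 0}\delta_{(w_n^*,u_n)}$ defined in~\eqref{def: RI}. Since $\RI^u$ keeps exactly the atoms $(w_n^*,u_n)$ with $u_n\le u$, for $u\le \hat u$ one has the set inclusion $\supp(\RI^u)\subseteq \supp(\RI^{\hat u})$ on this common probability space.

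The next step is to transport this inclusion through the definition~\eqref{eq:RI-def}. Since the edge set of $\cI^u$ is $\bigcup_{u_n\le u}\trace(w_n^*)$, the monotonicity of the indexing set immediately yields
\begin{equation}
\trace(\cI^u)\;=\;\bigcup_{u_n\le u}\trace(w_n^*)\;\subseteq\;\bigcup_{u_n\le \hat u}\trace(w_n^*)\;=\;\trace(\cI^{\hat u}).
\end{equation}
In terms of the coordinate maps $\Psi_e$ from~\eqref{eq:coordmaps}, this says $\Psi_e(\cI^u)\le \Psi_e(\cI^{\hat u})$ for every edge $e\in E$ on every sample point of the coupling.

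To conclude, let $G\in\mathcal{F}$ be increasing. Using the coupling and the pointwise domination above, whenever $\cI^u\in G$ we also have $\cI^{\hat u}\in G$ by the definition of an increasing event. Hence $\mathds{1}_{\{\cI^u\in G\}}\le \mathds{1}_{\{\cI^{\hat u}\in G\}}$ almost surely, and taking expectations gives $\prob^u(G)\le \prob^{\hat u}(G)$.

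There is essentially no obstacle here: the only subtlety is the bookkeeping of passing from the Poisson point process on $\mathcal{W}^*\times\mathbb{R}_+$ to the edge configuration in $\{0,1\}^E$, which is monotone in the set of included trajectories by construction. All the genuine probabilistic content (the existence of $\nu$, the fact that thinning of a Poisson process by the threshold $\{u_n\le u\}$ produces $\RI^u$) has already been absorbed into Section~\ref{subsection:RI}.
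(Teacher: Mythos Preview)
Your proposal is correct and is precisely the argument the paper has in mind: the paper does not give a formal proof but simply states that the proposition ``is an immediate consequence of the definition of the random interlacement graph in~\eqref{eq:RI-def}'', and your coupling via the common Poisson point process $\RI$ is exactly that immediate consequence spelled out in detail.
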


The next theorem follows from  \cite[{Theorem 1.1}]{PopovTeixeira}. The proof of this theorem first goes through $\mathcal{I}^u$, and then proceeds to the interlacements set. We state the inequality for the interlacements graph, boxes of $\mathbb{Z}^d$ and increasing events (the original paper states the result for more general subsets of $\mathbb{Z}^d$ and monotone events).

\begin{thm}[{\cite[{Theorem 1.1}]{PopovTeixeira}}] \label{thm:RI_decorrelation}
	There exist constants $ \newcnt \label{c:RIdecoupling} = \cnt{c:RIdecoupling} (d) > 0$ and $ \newexpo \label{e:RIdecoupling}  =  \expo{e:RIdecoupling} (d)  > 0$ such that the following holds.
	Let $L \geq 1$ be an integer and let $x_1$ and $x_2$ be vertices of $\mathbb{Z}^d$ with $\vert x_1 - x_2 \vert > 2L$.
	Let $R > 0$ be the distance between $B(x_1,L)$ and $B(x_2,L)$. 
	For $i = 1, 2$, let
	$A_i \in \sigma  ( \Psi_{(w,z)} \st w,z \in B(x_i, L)  ) $ 
	be increasing events.
	For any $u > 0$ and $\varepsilon \in (0,1)$ we have
	\[
	\prob^u \left( A_1 \cap A_2 \right) \leq 
	\prob^{ (1 + \varepsilon) u } \left( A_1 \right) \cdot \prob^{ (1 + \varepsilon) u } \left( A_2 \right) + \cnt{c:RIdecoupling} ( R  + 3L)^d  e^{ - \expo{e:RIdecoupling}\varepsilon^2 u R^{d-2}} .
	\]
\end{thm}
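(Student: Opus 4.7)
The plan is to establish the decorrelation inequality via a sprinkling argument in the spirit of the soft local time method of Popov and Teixeira. The guiding heuristic is: trajectories in $\supp(\RI^u)$ that miss both $K_1 := B(x_1,L)$ and $K_2 := B(x_2,L)$ are irrelevant for $A_1 \cap A_2$; trajectories visiting exactly one of the two balls contribute independently to $A_1$ and $A_2$; and the correlation between the events comes entirely from the \emph{bridge} trajectories that visit both $K_1$ and $K_2$. Since $A_1, A_2$ are increasing, I can afford to replace the restriction of $\RI^u$ to each $K_i$ by an augmented Poisson cloud of rate $(1+\varepsilon)u$, provided the bridge trajectories' contribution can be absorbed by the sprinkled intensity; the exponential error term then accounts for the atypical event that this budget is insufficient.

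First, I would restrict $\RI^u$ to $\mathcal{W}^*_K$ with $K = K_1 \cup K_2$, a Poisson point process with intensity $u\cdot \nu\vert_{\mathcal{W}^*_K}$. Thinning this process according to which subset of $\{K_1,K_2\}$ each trajectory visits yields three independent sub-processes: trajectories visiting only $K_1$, only $K_2$, or both. Inspecting the first two through the map $\vec{s}^{\,*}_{K_i}$ produces configurations on the edges inside $K_i$ whose laws can be compared, by direct manipulation of the equilibrium measures $e_K$ and $e_{K_i}$, to the law of $\RI^{u'}$ restricted to $K_i$ for some $u' \leq u$ whose gap $|u-u'|$ is small when $R/L$ is large.

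Next, handle the bridge trajectories by sprinkling: adjoin an independent Poisson contribution of intensity $\varepsilon u$ on appropriate path-spaces attached to each ball. By Proposition \ref{prop:RI_monotone} this enlargement can only increase $\prob(A_i)$, and I would arrange the sprinkled Poisson processes so that their superposition with the ``only $K_i$'' sub-process stochastically dominates the full restriction of $\RI^u$ to $K_i$. In the typical scenario, the sprinkling budget exceeds the bridge contribution and domination succeeds; the atypical failure event is controlled by a Gaussian-type deviation bound for the Poisson variable counting bridge excursions, and produces a probability of order $C(R+3L)^d \exp(-\xi \varepsilon^2 u R^{d-2})$. The polynomial prefactor absorbs an entropy contribution from summing over possible entry points in $K_1 \cup K_2$, while the exponent reflects the effective length scale $R^{d-2}$ that appears through the Green's function bound for a walk crossing between the two balls.

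The main obstacle will be constructing the coupling so that the thinned-plus-sprinkled process dominates the true restriction of $\RI^u$ in a monotone sense. This is delicate because the entry-point distribution on $K_1$ for a bridge trajectory arriving via $K_2$ is not the equilibrium measure $\tilde e_{K_1}$ but rather the harmonic measure of $K_1$ as seen from $\tilde e_{K_2}$-distributed starting points. Aligning these two distinct laws is precisely the role of the soft local time construction: embed both Poisson processes into a common auxiliary Poisson point process on the product of a base path-space with a Lebesgue time axis, and control the additional intensity needed for one to dominate the other through the supremum of the ratio of the two intensity densities. The quantitative Poisson concentration of this supremum, combined with standard Chernoff estimates applied to $uR^{d-2}$, yields the stated error term.
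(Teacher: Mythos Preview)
The paper does not prove this statement at all: it is quoted from \cite{PopovTeixeira} and simply invoked as a black box, with only the remark that the result there is stated for more general sets and monotone events, and that the proof passes through $\mathcal{I}^u$ before reaching the interlacement set. So there is no ``paper's own proof'' to compare against.

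That said, your outline is a faithful high-level description of the Popov--Teixeira soft local time argument: decompose the Poisson cloud into trajectories hitting only $K_1$, only $K_2$, or both; absorb the bridge trajectories into an independent sprinkled process of intensity $\varepsilon u$ via a soft-local-time coupling; and control the failure of this domination by a Poisson/Chernoff bound producing the $\exp(-c\varepsilon^2 u R^{d-2})$ term. One caution: the polynomial prefactor $(R+3L)^d$ in \cite{PopovTeixeira} does not arise from ``summing over entry points in $K_1\cup K_2$'' as you suggest, but rather from a union bound over the number of excursions between enlarged boxes (the soft local time coupling is applied excursion-by-excursion, and the number of excursions is of polynomial order with overwhelming probability). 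This is a minor misattribution in an otherwise sound sketch; for the purposes of this paper the theorem is treated as an input and no proof is required.
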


\subsection{Intersections of random interlacements and a set}

For a finite subset $K \subset \mathbb{Z}^d$, recall that $\mathcal{W}_K^*$ is the set of trajectories intersecting $K$,  $\vec{\mathcal{W}}_K$ is the set of infinite paths starting at $K$ and $\vec{s}^{\, *}_{K} $ is a map between these spaces (defined after~\eqref{eq: start function}).
Let $u > 0$. For $ \RI = \sum_{n \geq 0} \delta_{(w_n^*, u_n)}$, define  
\begin{equation} \label{eq: measure paths at K}
		 \vec{\RI}^{u}_{K} \coloneqq \sum_{u_n\leq u} \delta_{ \vec{s}^{\, *}_K ( w_n^*)} \ind{ w_n^* \in \mathcal{W}^{*}_K }.
\end{equation}
Note that  $\vec{s}^{\, *}_K ( w_n^*)$ is the path representing the range of the trajectory $w_n^*$ in $\RI^{u}$ after its first entrance to the set $K$, so $\vec{\RI}^{\, u}_{K}$ is the point measure of such paths up to level $u$.

Next, we define a random point measure with the same distribution as $ \vec{\RI}^u_{K} $, providing a local construction of random interlacements intersecting a finite set $K$.
Let $N_K$ be a Poisson random variable with parameter $ u \cdot \capa (K) $, and let   $ (X^j)_{j \geq 1} \subset \vec{\mathcal{W}}_K$ be i.i.d. random walks with distribution $P_{\tilde{e}_K}$, which are independent of $N_K$. Then the point measure on $\vec{\mathcal{W}}_K$ is
\begin{equation} \label{eq: local-forward measure}
		\vec{\mu}^{\, u}_{K} \coloneqq  \sum_{ j = 1}^{N_K} \delta_{X^j}.
\end{equation}
In the rest of this paper,  $N_K$ indicates the number of trajectories in $\RI^u$ 
intersecting the set $K$.

The construction of Poisson point processes in \cite[Equation (4.2.1)]{DRSbook} implies the equivalence of the two measures above.

\begin{prop} \label{prop: K-intersection}
	The point measures  $\vec{\mu}^{\,u}_{K}$ and $\vec{\RI}^{u}_K $ are equal in distribution.
\end{prop}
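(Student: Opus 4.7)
The plan is to derive the equality in distribution from the standard restriction and mapping theorems for Poisson point processes, combined with a direct computation of the pushforward intensity.

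First, I would use the fact that $\RI$ is a Poisson point process on $\mathcal{W}^* \times \mathbb{R}_+$ with intensity $\nu \otimes \Lambda$. The restriction theorem for Poisson point processes says that restricting $\RI^u$ to the measurable subset $\mathcal{W}_K^* \times [0, u] \subset \mathcal{W}^* \times \mathbb{R}_+$ produces a Poisson point process, whose first-coordinate projection is a PPP on $\mathcal{W}^*$ with intensity measure $u \cdot \nu\vert_{\mathcal{W}_K^*}$.

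Second, I would apply the map $\vec{s}_K^* : \mathcal{W}_K^* \to \vec{\mathcal{W}}_K$ (defined after \eqref{eq: start function}) and invoke the mapping theorem for Poisson point processes. This yields that $\vec{\RI}^{\, u}_K$ (as defined in \eqref{eq: measure paths at K}) is a Poisson point process on $\vec{\mathcal{W}}_K$ with intensity $u \cdot (\vec{s}_K^*)_{\ast} \bigl(\nu\vert_{\mathcal{W}_K^*}\bigr)$.

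Third, I would compute this pushforward intensity explicitly. By the definition of $\nu$ via the pullback $\nu(A) = Q(\pi^{-1}(A))$ and the defining formula for $Q_K$, for cylinder sets of the form $\{w(0) = x,\ (w(n))_{n \geq 0} \in B\}$ with $x \in K$ and $B \subset \mathcal{W}[0,\infty]$ measurable, we have
\[
    (\vec{s}_K^*)_{\ast}\bigl(\nu\vert_{\mathcal{W}_K^*}\bigr)\bigl(\{w(0)=x,\ (w(n))_{n\ge 0}\in B\}\bigr) = e_K(x) \cdot \prw_x(B).
\]
In particular, its total mass is $\sum_{x \in K} e_K(x) = \capa(K)$, and dividing by this total yields the probability measure $\prw_{\tilde e_K}$. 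Hence $u \cdot (\vec{s}_K^*)_{\ast}(\nu\vert_{\mathcal{W}_K^*}) = u \cdot \capa(K) \cdot \prw_{\tilde e_K}$.

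Finally, I would use the standard representation of a Poisson point process of finite total intensity: a PPP on $\vec{\mathcal{W}}_K$ with intensity $u \cdot \capa(K) \cdot \prw_{\tilde e_K}$ admits the representation $\sum_{j=1}^{N_K} \delta_{w^j}$, where $N_K \sim \Poi(u \cdot \capa(K))$ and $(w^j)_{j \geq 1}$ are i.i.d.\ paths distributed as $\prw_{\tilde e_K}$, independent of $N_K$. This is precisely the definition of $\vec{\mu}^{\,u}_K$ in \eqref{eq: local-forward measure}, completing the equality in distribution. The only real subtlety is carefully reconciling the quotient $\mathcal{W}^* \to \vec{\mathcal{W}}_K$ passing via the map $\vec{s}_K^{\,*}$, which is well-defined since translating a representative $w \in \mathcal{W}_K$ by $H_K(w)$ depends only on its equivalence class; everything else is a direct application of standard PPP machinery as indicated by the citation of~\cite[Equation (4.2.1)]{DRSbook}.
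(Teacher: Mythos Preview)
Your proposal is correct and is precisely an unpacking of what the paper's one-line citation to \cite[Equation (4.2.1)]{DRSbook} refers to: the paper does not give its own argument here but simply invokes the standard Poisson point process construction, which is exactly the restriction/mapping/finite-representation sequence you wrote out.
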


\subsection{Stochastic domination}

For random variables $X$ and $Y$ (or more generally distributions of random variables), we denote by $X \geq_{\ST} Y$ the fact that $X$ stochastically dominates $Y$.

\begin{prop} \label{prop: Poisson ST}
	Let $r > s > 0$ and consider random variables $X \sim \Poi (r)$ and $Y \sim \Poi (s) $. Then $ X \geq_{\ST} Y $. 
\end{prop}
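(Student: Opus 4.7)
The plan is to prove the proposition by exhibiting an explicit monotone coupling of the two Poisson variables on a common probability space, which is the most direct route to stochastic domination.

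First, I would invoke the standard superposition (convolution) property of Poisson random variables: if $Y' \sim \Poi(s)$ and $Z \sim \Poi(r-s)$ are independent, then $Y' + Z \sim \Poi(r)$. This follows immediately from the identity
\[
\sum_{k=0}^{n} \frac{s^k}{k!} \cdot \frac{(r-s)^{n-k}}{(n-k)!} = \frac{r^n}{n!},
\]
which is the binomial theorem applied to $(s + (r-s))^n / n!$. Multiplying by $e^{-r} = e^{-s}e^{-(r-s)}$ confirms that the convolution of the two probability mass functions is the $\Poi(r)$ mass function.

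Next, I would construct the coupling. On some probability space, take independent $Y' \sim \Poi(s)$ and $Z \sim \Poi(r-s)$, and set $X' \coloneqq Y' + Z$. Then $X' \sim \Poi(r)$ by the previous step, so $X' \stackrel{d}{=} X$ and $Y' \stackrel{d}{=} Y$. Moreover, since $Z \geq 0$ almost surely (as $Z$ takes values in $\mathbb{Z}_{\geq 0}$ and $r - s > 0$), we have the pointwise inequality $X' \geq Y'$ almost surely. Therefore, for any increasing measurable function $f : \mathbb{Z}_{\geq 0} \to \mathbb{R}$,
\[
\E[f(X)] = \E[f(X')] \geq \E[f(Y')] = \E[f(Y)],
\]
which is exactly $X \geq_{\ST} Y$.

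There is no real obstacle here; the superposition property of Poisson variables is classical, and once that is in hand the coupling is immediate. The only care needed is to note that $r - s > 0$ is precisely what guarantees $Z$ is a well-defined nonnegative integer-valued random variable, so the inequality $X' \geq Y'$ holds almost surely rather than only in distribution.
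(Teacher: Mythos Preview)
Your proof is correct. The paper does not actually provide a proof of this proposition; it is stated as a standard fact about Poisson random variables and used without justification. Your coupling argument via the superposition property $\Poi(s) + \Poi(r-s) = \Poi(r)$ is the canonical proof of this result.
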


\subsection{Asymptotic notation and constants}

Throughout the rest of the paper we write $C$, $c$ and $c'$ for constants (depending only on the dimension) whose value might change from one appearance to the next. Constants whose values are important for the proofs, and that might depend on additional parameters,  are numbered, denoted by Greek letters (e.g. $\xi,\eta,\beta$) and fixed in their first appearance.

We write $f(x) \asymp g(x)$ if there exist positive constants $c, c'$ (depending only on $d$) such that $c g(x) \leq f(x) \leq c'g(x) $ for all $x > 0$. 
Similarly,  $ f(x) \preceq g(x) $ indicates that there exists a positive constant $c$, depending  only on the dimension, such that $f(x) \leq c g(x)$ for all $x > 0$.

Finally, we occasionally use real numbers when integers are required. In such occurrences, one should always take either the ceiling or the floor, depending on the instance.

\section{Estimates on the capacity and  the intersections of random walks}
\label{sec:capacity}

In this section, we recall some estimates regarding the capacity of random walks, the number of intersections between different random walks, and the chemical distance in the range of a random walk in high dimensions, which are needed for the proof of Theorem~\ref{thm:main_upper} and Theorem~\ref{thm:main_lower}.

\subsection{Capacity of the range of a random walk}

Let $X = (X (n))_{n \geq 0}$ be a simple random walk on $\mathbb{Z}^d$. 
Recall that we write $X [0,n] = [X(0), \ldots , X(n)] $, and denote $\range_n:=\range (X [0,n])$.

The behaviour of the capacity of the range of a simple random walk in high dimension has been  thoroughly studied in~\cite{asselah2020deviations,AsselahSchapiraSousi,JainOrey,Schapira2020}. 
For the purposes of this work, we only need a law of large numbers and a large deviations result.

\begin{thm}[{\cite[Theorem 2]{JainOrey}}] \label{thm: capacity_lln}
 For every $d \geq 5$, there exists a constant $\gamma_d > 0$  (depending only on  the dimension), such that the following holds.
	For a simple random walk $X = (X (n))_{n \geq 0}$ on $\mathbb{Z}^d$
 \begin{equation} \label{eq: SLLN capa}
 	 \lim_{n \to \infty} \frac{\capa (\range_n)}{n} = \gamma_d \qquad \prw-\text{a.s.}
 \end{equation}
 In particular,  for any $\gamma < \gamma_d$ 
	\[
	 \lim_{n \to \infty} \prw \big(  \capa (\range_n ) > \gamma  n   \big ) = 1. 
	\]
\end{thm}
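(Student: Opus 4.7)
The plan is to prove the strong law of large numbers via Kingman's subadditive ergodic theorem, exploiting subadditivity of capacity. For a simple random walk $X$ on $\mathbb{Z}^d$, setting $Y_n := \capa(\range_n)$, the identity $\range_{n+m} = \range_n \cup (X(n) + \range_m \circ \theta_n)$ together with the general inequality $\capa(A\cup B) \leq \capa(A) + \capa(B)$ yields
\[
Y_{n+m} \leq Y_n + Y_m\circ\theta_n.
\]
Since the increments of the simple random walk are i.i.d., the shift $\theta_n$ acts ergodically on the relevant path space, so Kingman's subadditive ergodic theorem applies and gives
\[
\lim_{n\to\infty}\frac{\capa(\range_n)}{n} = \gamma_d \qquad \prw\text{-a.s. and in } L^1,
\]
where $\gamma_d = \inf_{n\geq 1}\mathbb{E}[\capa(\range_n)]/n \in [0,\infty)$. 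Finiteness of $\gamma_d$ is immediate from the crude bound $\capa(\range_n) \leq |\range_n| \leq n+1$.

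The main obstacle is proving $\gamma_d > 0$; this is exactly where the dimensional hypothesis $d\geq 5$ is essential. The plan is to use the cut-point representation. Call an index $k\in\{0,\ldots,n\}$ a cut-time for $X[0,n]$ if $\{X_0,\ldots,X_{k-1}\} \cap \{X_{k+1},\ldots,X_n\} = \emptyset$. On a cut-time, conditional on $X_k$ the past and future of the walk behave as two independent simple random walks started at $X_k$; using the reversibility of the walk and the equilibrium measure formula $e_{\range_n}(X_k) = \prw_{X_k}(\tilde H_{\range_n} = \infty)$, one can bound
\[
\mathbb{E}[\capa(\range_n)] \geq c\cdot\mathbb{E}\bigl[\#\{\text{cut-times in } [0,n]\}\bigr]
\]
for some $c=c(d)>0$ coming from the transience probability of a simple random walk avoiding an independent random-walk range. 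In $d\geq 5$ the expected number of intersections of two independent simple random walks up to time $n$ is $O(1)$, so a standard second-moment argument shows that the expected number of cut-times up to time $n$ is of linear order in $n$. This yields $\mathbb{E}[\capa(\range_n)] \succeq n$, hence $\gamma_d > 0$.

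Once the almost sure convergence and positivity of $\gamma_d$ are established, the claimed statement $\lim_{n\to\infty}\prw(\capa(\range_n) > \gamma n) = 1$ for every $\gamma < \gamma_d$ is immediate from convergence in probability, since $\{\capa(\range_n) > \gamma n\} \supset \{|\capa(\range_n)/n - \gamma_d| < \gamma_d - \gamma\}$. I expect the cut-point lower bound to be the only non-routine step: the subadditivity and ergodic theorem argument is standard, but obtaining the positive lower bound on the density of cut-times requires the quantitative transience/intersection estimates that crucially use $d\geq 5$.
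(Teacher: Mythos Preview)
The paper does not prove this theorem; it is quoted from Jain--Orey \cite{JainOrey} as a known result, so there is no ``paper's own proof'' to compare against. Your outline---subadditivity of capacity combined with Kingman's subadditive ergodic theorem to obtain the almost-sure limit, followed by a non-intersection argument in $d\geq 5$ to show $\gamma_d>0$---is the standard route and is correct.

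One imprecision worth flagging in your positivity step: the claimed inequality $\mathbb{E}[\capa(\range_n)] \geq c\cdot \mathbb{E}[\#\{\text{cut-times}\}]$ does not follow directly from the cut-time property. The equilibrium contribution $e_{\range_n}(X_k)=\prw_{X_k}(\tilde H_{\range_n}=\infty)$ is the probability that a \emph{fresh, independent} walk from $X_k$ escapes the range; whether $k$ is a cut-time of the original walk says nothing about this fresh walk. What you actually use is a \emph{three}-walk non-intersection event: writing the range as past $\cup$ future and reversing the past, the contribution of index $k$ to $\mathbb{E}[\capa(\range_n)]$ is bounded below by the probability that three independent simple random walks from the origin (reversed past of length $k$, future of length $n-k$, and an infinite fresh escape walk) are mutually non-intersecting after time $0$. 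In $d\geq 5$ the expected number of pairwise intersections of two independent walks is finite, so this triple non-intersection probability is bounded below by a positive constant uniformly in $k$ and $n$, giving $\mathbb{E}[\capa(\range_n)]\succeq n$ directly. Your cut-time count is then unnecessary as an intermediate step, though of course the same dimensional input ($d\geq 5$) is what makes both arguments work. The ``in particular'' clause follows immediately from almost-sure convergence, as you note.
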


\begin{thm}[{\cite[Theorem 1.7]{asselah2020deviations}}]\label{thm:cap_ldp}
	For every $d\ge 5$, there exist $\gamma^{+}_d >\gamma_d$ (with $\gamma_d$ as in Theorem \ref{thm: capacity_lln}), $c > 0$ and $\newexpo  \label{e:cap_ldp}  \in (0, \infty)$ such that for all $n \geq 1$
	\[ 
		\prw\left(\capa(\range_n)> \gamma_d^+  n\right)\le c e^{- \expo{e:cap_ldp} n}.
	\]
\end{thm}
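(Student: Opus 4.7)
The plan is to deduce the exponential upper tail from the subadditivity of capacity combined with a Chernoff/Cram\'er bound for sums of i.i.d.\ bounded random variables, using Theorem~\ref{thm: capacity_lln} to identify the relevant threshold.

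First, I would fix an integer $K\ge 1$ (to be chosen large in terms of the gap $\gamma_d^+-\gamma_d$), set $N:=\lfloor n/K\rfloor$, and split the walk into blocks $B_i:=X[(i-1)K,iK]$ for $i=1,\dots,N$. Since $\range_n\subseteq \bigcup_{i=1}^N\range(B_i)\cup \range(X[NK,n])$ and $\capa$ is monotone and subadditive,
\[
\capa(\range_n)\;\le\;O(K)+\sum_{i=1}^N Y_i,\qquad Y_i:=\capa(\range(B_i)).
\]
By the strong Markov property applied at the deterministic times $(i-1)K$ together with translation invariance of $\capa$, the variables $Y_i$ are i.i.d.\ with the law of $\capa(\range_K)$. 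They are also uniformly bounded: since $e_K(x)\le 1$ we have $\capa(A)\le|A|$, and hence $Y_i\le K+1$ almost surely.

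Second, I would use Theorem~\ref{thm: capacity_lln} together with bounded convergence (noting that $\capa(\range_n)/n\le (n+1)/n$ is deterministically bounded) to obtain
\[
\frac{\E\bigl[\capa(\range_K)\bigr]}{K}\;\xrightarrow[K\to\infty]{}\;\gamma_d.
\]
Hence for any given $\gamma_d^+>\gamma_d$ one can choose $K$ and $\delta>0$ such that $(\E[Y_1]+\delta)/K<\gamma_d^+$. Since the $Y_i$ are i.i.d.\ and take values in $[0,K+1]$, the standard Cram\'er/Chernoff bound produces a constant $c=c(K,\delta)>0$ with
\[
\prw\!\left(\sum_{i=1}^N Y_i>N\bigl(\E[Y_1]+\delta\bigr)\right)\le e^{-cN}.
\]
Combining the two displays yields $\prw(\capa(\range_n)>\gamma_d^+ n)\le c'\,e^{-(c/K)n}$, which is the claim with $\xi=c/K$.

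The main conceptual obstacle, though modest in this setting, is identifying the i.i.d.\ structure of the $Y_i$'s, which relies on combining the strong Markov property at the times $(i-1)K$ with the translation invariance of capacity; no fine geometric information about the random walk range is needed beyond the a.s.\ law of large numbers from Theorem~\ref{thm: capacity_lln}. The only remaining technicality is the overshoot from the partial last block of length $n-NK<K$, which contributes only a deterministic $O(K)$ term and is dominated by $(\gamma_d^+-\gamma_d)n$ for large $n$.
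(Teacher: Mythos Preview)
Your argument is correct. The subadditivity of capacity together with its translation invariance indeed makes the block variables $Y_i$ i.i.d.\ and bounded by $K+1$, the bounded convergence passage from the almost sure limit in Theorem~\ref{thm: capacity_lln} to convergence of means is legitimate, and the Hoeffding/Chernoff step then yields the stated exponential bound. (A minor point: the times $(i-1)K$ are deterministic, so you are using the ordinary Markov property, not the strong one.)

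Note that the paper does not actually prove this statement: it is imported verbatim as \cite[Theorem~1.7]{asselah2020deviations}. That reference establishes considerably finer large deviations estimates for $\capa(\range_n)$, including sharp rate functions and lower-tail behaviour. What you have done is observe that the specific, weak form of the upper tail quoted here---existence of \emph{some} $\gamma_d^+>\gamma_d$ with an exponential bound---requires none of that machinery and follows from subadditivity alone. Your route is more elementary and entirely sufficient for the only application in this paper (Lemma~\ref{lemma:unioncapa}); the trade-off is that you do not identify the optimal $\gamma_d^+$ or the true rate, which the cited work does.
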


\subsection{Escape probabilities and intersections of random walks}

We first recall a pair of known results on the first entrance time to a subset of vertices.

\begin{prop}[{\cite[Proposition 2.4.5]{LawlerLimic}}] \label{prop: RW escape}
	Let $d \geq 5$ and let $X$ be a simple random walk on $\mathbb{Z}^d$ starting at $0$. 
	There exist $ \newexpo \label{e:RWescape} = \expo{e:RWescape} (d)\in (0,\infty)$ and $c\in (0,\infty)$ such that for all $n \geq 0$ and all $\lambda > 1$,
	\begin{equation} \label{eq: RW escape}
		\prw \left(\lambda^{-1} n^2 \leq H_{\partial B (n)}  
		\leq \lambda n^2  \right) \geq 1- c e^{- \lambda \expo{e:RWescape} }. 
	\end{equation}
\end{prop}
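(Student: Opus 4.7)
The plan is to bound the two tail events $\{H_{\partial B(n)}>\lambda n^2\}$ and $\{H_{\partial B(n)}<\lambda^{-1}n^2\}$ separately, each by $ce^{-c'\lambda}$, and then take a union bound.

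For the upper tail, I would first establish the following ``one-step'' escape estimate: there exist $K=K(d)>0$ and $p=p(d)\in(0,1)$ such that for every $x\in B(n)$,
\[
\prw_x\!\left(H_{\partial B(n)}\le K n^2\right)\ge p.
\]
This is a straightforward invariance-principle/CLT calculation: from any $x\in B(n)$, the displacement $|X(Kn^2)-x|$ is of order $\sqrt{Kn^2/d}$, so for $K$ large (independent of $n$ and $x$) it exceeds $2n$ with probability at least $p$, which forces an exit from $B(n)$. Given this, the strong Markov property applied at the times $j K n^2$, together with the observation that $X(jKn^2)\in B(n)$ on the event $\{H_{\partial B(n)}>jKn^2\}$, yields by iteration
\[
\prw\!\left(H_{\partial B(n)}>jKn^2\right)\le (1-p)^j.
\]
Choosing $j=\lfloor \lambda/K\rfloor$ gives $\prw(H_{\partial B(n)}>\lambda n^2)\le c\,e^{-c'\lambda}$.

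For the lower tail, I would use that exit before time $n^2/\lambda$ is the event $\{\max_{k\le n^2/\lambda}|X(k)|_\infty\ge n\}$. Each coordinate process $X_i$ is a martingale with increments bounded by $1$, so Doob's maximal inequality applied to the exponential submartingale $\exp(\theta X_i(k))$, together with the standard Azuma-Hoeffding optimization over $\theta>0$, yields
\[
\prw\!\left(\max_{k\le t}|X_i(k)|\ge n\right)\le 2\exp\!\left(-\tfrac{n^2}{2t}\right).
\]
Summing over the $d$ coordinates and plugging in $t=n^2/\lambda$ gives
\[
\prw\!\left(H_{\partial B(n)}<\lambda^{-1}n^2\right)\le 2d\, e^{-\lambda/2}.
\]
Combining the two bounds and choosing $\expo{e:RWescape}$ to be the smaller of the two decay rates completes the proof.

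I do not expect any serious obstacle here; the argument is classical. The only point that needs a modicum of care is the first step, namely producing a one-step escape probability $p$ that is uniform over starting points in $B(n)$ and over $n$. This is handled by translating the problem to an exit from $B_x(2n)\supset B(n)$, where the CLT / local central limit theorem delivers a uniform lower bound on the probability that $|X(Kn^2)-x|\ge 2n$ for $K$ chosen once and for all depending only on $d$.
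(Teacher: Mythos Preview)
The paper does not supply its own proof of this proposition; it is stated with a citation to \cite[Proposition 2.4.5]{LawlerLimic} and used as a black box. Your argument is a correct and standard derivation: the geometric decay for the upper tail via iterated one-step escape and the Markov property, and Azuma--Hoeffding plus a union bound over coordinates for the lower tail, are exactly the ingredients behind the cited result. Note that nothing in your proof (or in the statement as used) actually requires $d\geq 5$; the restriction is an artifact of the surrounding section.
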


\begin{prop}[{\cite[Lemma 3.1]{CaiHanYeZhang}}] \label{prop: RW and set}
	Let $X$ be a simple random walk starting at $x\in\mathbb{Z}^d$. 
	For every $M > 0 $, there exists $\newet\label{et:rws}=\et{et:rws}(d,M)>0$ such that, for any $R\geq 1$and any $A \subset B_x ( M R)$ satisfying $d (x, A) \leq  R $, it holds that 
	\[
		\prw_x \left( H_A < R^2 \right) \geq \frac{\et{et:rws} \cdot \capa (A) }{R^{d-2}}.
	\]
\end{prop}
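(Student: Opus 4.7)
The plan is to apply the Paley--Zygmund inequality to a weighted occupation functional. Specifically, I would define
\[
	Z \;:=\; \sum_{y \in A} e_A(y) \sum_{k=0}^{R^2 - 1} \ind{X(k) = y},
\]
so that $\{Z > 0\} \subseteq \{H_A < R^2\}$, and it would suffice to bound $\prw_x(Z > 0)$ from below. Writing $G^{R^2}(x, y) := \sum_{k=0}^{R^2-1} \prw_x(X(k) = y)$ for the truncated Green's function, the first step is to establish
\[
	G^{R^2}(x, y) \;\geq\; c_M \, R^{-(d-2)} \quad \text{for every } y \in B_x(MR),
\]
for some $c_M = c_M(d, M) > 0$. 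This would follow from standard heat-kernel estimates: for $|x-y| \leq R$ by summing $p_k(x,y)$ over $k \asymp |x-y|^2$, and for $R \leq |x-y| \leq MR$ by summing over $k$ in an interval of length $\asymp R^2$ near $k = R^2$, on which the Gaussian factor $\exp(-c|x-y|^2/k)$ stays bounded below by $e^{-cM^2}$. Integrating against $e_A$ then yields $\E_x[Z] \geq c_M R^{-(d-2)} \capa(A)$.

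For the second moment, I would split the sum over pairs $(k, k')$ of visit times by their order, apply the Markov property at the smaller time, and obtain
\[
	\E_x[Z^2] \;\leq\; 2 \sum_{y, y' \in A} e_A(y) e_A(y') G^{R^2}(x, y) G(y, y') \;+\; \E_x[Z],
\]
where $G$ denotes the un-truncated Green's function. The crucial input is the potential-theoretic identity
\[
	\sum_{y' \in A} e_A(y') \, G(y, y') \;=\; \prw_y(H_A < \infty) \;=\; 1 \qquad \text{for every } y \in A,
\]
which is the last-exit decomposition specialized to a starting point already in $A$ (so that $H_A = 0$ $\prw_y$-a.s.). Substituting this collapses the double sum into a single copy of $\E_x[Z]$ and yields $\E_x[Z^2] \leq 3\, \E_x[Z]$. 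Paley--Zygmund then gives
\[
	\prw_x(H_A < R^2) \;\geq\; \prw_x(Z > 0) \;\geq\; \frac{(\E_x Z)^2}{\E_x[Z^2]} \;\geq\; \frac{\E_x[Z]}{3} \;\geq\; \frac{c_M}{3} \cdot \frac{\capa(A)}{R^{d-2}},
\]
so one may take $\et{et:rws} = c_M / 3$.

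The main difficulty, I expect, is the first-moment lower bound on $G^{R^2}(x, y)$: points $y \in A$ may sit at distance up to $MR$ from $x$, whereas $R^2$ is the diffusive timescale only for distances up to $R$. One must therefore work in the moderate-deviation regime of the heat kernel, and the $M$-dependence of $\et{et:rws}$ is inherited from the Gaussian factor $e^{-cM^2}$ in that regime. Incidentally, the hypothesis $d(x, A) \leq R$ seems not to be used by this route; it is natural for the applications of the proposition later but is not needed to establish the stated bound.
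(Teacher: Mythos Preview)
The paper does not prove this proposition; it is simply quoted from \cite[Lemma~3.1]{CaiHanYeZhang}, so there is no in-paper argument to compare against. Your second-moment/Paley--Zygmund approach is a correct and standard route to such hitting estimates. The key identity $\sum_{y'\in A} e_A(y')\,G(y,y')=\prw_y(H_A<\infty)=1$ for $y\in A$ is exactly the last-exit decomposition, and it collapses the second moment to $3\,\E_x[Z]$ just as you say.

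One point worth making precise: your uniform lower bound $G^{R^2}(x,y)\geq c_M R^{-(d-2)}$ for all $y\in B_x(MR)$ relies on the Gaussian heat-kernel lower bound in its regime of validity, which requires (aside from parity) $k\geq |x-y|_1$, hence $R^2\gtrsim MR$, i.e.\ $R\gtrsim M$. For smaller $R$ some points of $A$ may be unreachable in $R^2$ steps, and indeed the stated inequality is vacuous at $R=1$ when $x\notin A$ (then $\prw_x(H_A<1)=0$). This edge case is harmless in context: in the only application in the paper (Lemma~\ref{lem:blackboxbound}) one has $R=R_u\to\infty$, and since $\capa(A)\leq c(MR)^{d-2}$ the right-hand side is bounded by a constant depending only on $d$ and $M$, so one may simply shrink the constant to accommodate the range $R<M$. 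Your observation that the hypothesis $d(x,A)\leq R$ plays no role in your argument is correct.
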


Next we review a large deviations result regarding intersections of random walks, that we use in Section~\ref{sec:local_lower_bound} in the proof of  Theorem~\ref{thm:main_lower}.

Given $k \geq 1$, consider a set of $k$ independent simple random walks on $\mathbb{Z}^d$, starting at arbitrary points.
Denote by $\mathcal{N}_k$ the number of points in $\mathbb{Z}^d$ which are visited by at least two of the random walks. We first cite a known result for $ k = 2$.

\begin{thm}[{\cite[Theorem 1.1]{asselah2020large}}] \label{thm:intersections2rw}
	Let $ d \geq 5$. There exists constants $\newexpo > 0 \label{e:intersections2rw}$ and $C>0$ such that for every $m \geq 1$ 
	\[
		\prw\left(\mathcal{N}_2>m\right) < Ce^{-\expo{e:intersections2rw} m^{1-\frac{2}{d}} }.
	\]
\end{thm}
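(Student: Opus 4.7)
The plan is to establish this stretched-exponential tail via the moment method, exploiting Green's function decay in high dimensions. Recall that in $d \geq 5$, the Green's function $G(x,y) \asymp (1+|x-y|)^{2-d}$ of a simple random walk is square-summable, $\sum_z G(0,z)^2 < \infty$, which is precisely why $\mathbb{E}[\mathcal{N}_2] < \infty$: two independent walks intersect only finitely often almost surely.

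First, I would fix $k \geq 1$ (to be optimized) and aim to bound the $k$-th moment. Writing $\mathcal{N}_2 = \sum_{z \in \mathbb{Z}^d} \1_{z \in \range(X)} \1_{z \in \range(Y)}$ for independent simple random walks $X, Y$, independence yields
\[
\mathbb{E}[\mathcal{N}_2^k] = \sum_{z_1, \ldots, z_k \in \mathbb{Z}^d} \prw(z_1, \ldots, z_k \in \range(X)) \, \prw(z_1, \ldots, z_k \in \range(Y)).
\]
By the standard path-ordering estimate, each factor is dominated by a sum over orderings of Green's function products:
\[
\prw(z_1, \ldots, z_k \in \range(X)) \leq \sum_{\sigma \in S_k} G(x_0, z_{\sigma(1)}) \prod_{i=2}^{k} G(z_{\sigma(i-1)}, z_{\sigma(i)}).
\]

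Second, the target is the sharp moment bound $\mathbb{E}[\mathcal{N}_2^k] \leq C^k (k!)^{d/(d-2)}$. Once in hand, Markov gives $\prw(\mathcal{N}_2 > m) \leq C^k (k!)^{d/(d-2)} m^{-k}$, and Stirling together with the optimal choice $k \asymp m^{(d-2)/d}$ yields precisely $\prw(\mathcal{N}_2 > m) \leq C' e^{-\xi m^{1-2/d}}$. To produce the correct power $d/(d-2)$, I would localize the $k$-fold sum to a ball of radius $R \sim k^{1/(d-2)}$ (so that $\capa(B(R)) \sim R^{d-2} \sim k$) and use that intersection configurations outside such a ball contribute only exponentially small mass, obtained from tail bounds on the diameter of the mutual range. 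Inside the ball one iterates the convolution estimate $\sum_{z} G(x,z) G(z,y) \lesssim (1+|x-y|)^{4-d}$, carefully tracking the combinatorial factor $(k!)^2$ from the double permutation sum but saving a factor $(k!)^{(d-4)/(d-2)}$ from the geometric constraint that the $k$ points fit in a ball of capacity $\sim k$.

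The main obstacle is Step 2: obtaining the sharp exponent $d/(d-2)$ rather than the naive $2$. A direct iteration using only $\sum_z G(z)^2 < \infty$ produces $\mathbb{E}[\mathcal{N}_2^k] \leq C^k (k!)^2$, which after optimizing in Markov gives only the weaker bound $e^{-c \sqrt m}$. The improvement reflects the underlying geometry: to support $m$ intersections, both walks must densely fill a common ball whose size is constrained by \emph{capacity} rather than volume, and it is the capacity scaling $\capa(B(R)) \sim R^{d-2}$ that dictates the true rate function. An alternative route, which sidesteps the combinatorial moment computation, is to run a multiscale induction: on dyadic scales $R_j = 2^j$, bound the conditional probability that both walks contribute many intersections inside an annulus, then glue the scales using independence of annular excursions. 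Either route requires a nontrivial input beyond the square-summability of $G$.
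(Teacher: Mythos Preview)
The paper does not prove this statement: it is quoted verbatim as \cite[Theorem~1.1]{asselah2020large} and used as a black box (only the immediate corollary for $k$ walks is derived in the paper, via a trivial union bound). There is therefore no proof in the paper to compare your proposal against.

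As for your sketch itself: the moment strategy and the target bound $\mathbb{E}[\mathcal{N}_2^k]\le C^k(k!)^{d/(d-2)}$ are the right ingredients, and you correctly flag that the naive iteration only yields $(k!)^2$ and hence $e^{-c\sqrt{m}}$. But your Step~2 is where the entire content lies, and the outline you give---``localize to a ball of capacity $\sim k$ and save $(k!)^{(d-4)/(d-2)}$ from a geometric constraint''---is not a proof. The double permutation sum does not simply lose that factor once the points are confined to a ball; one still has to control the Green's function convolutions, and the combinatorics of how the two orderings interact is genuinely delicate. The cited paper of Asselah and Schapira does not in fact proceed by the ad hoc localization you describe; the sharp exponent $1-2/d$ is obtained there through a substantially more involved argument. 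Your proposal is a correct heuristic for why the exponent should be $1-2/d$, but it is not close to a self-contained proof.
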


From the last theorem we obtain a corollary for $k$ independent random walks. The proof follows from a union bound over the $\binom{k}{2}$ choices of pairs of paths intersecting and an application of Theorem~\ref{thm:intersections2rw} for each pair.

\begin{cor}\label{cor:numofintersections}
	Let $ d \geq 5$ and let $ \expo{e:intersections2rw} > 0$  be the exponent defined in Theorem~\ref{thm:intersections2rw}. Then for every $k > 1$ and $m \geq 1$,
	\[ 
		\prw\left(\mathcal{N}_k>k^2m\right) < k^2 \cdot e^{- \expo{e:intersections2rw} m^{\frac{3}{5}} }.
	\]
\end{cor}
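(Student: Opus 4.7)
The plan is to reduce to the pairwise case (Theorem~\ref{thm:intersections2rw}) via a pigeonhole argument on pairs of walks, then apply a union bound over the $\binom{k}{2}$ pairs.

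First I would set up notation: given $k$ independent simple random walks $X^{(1)},\dots,X^{(k)}$ with arbitrary starting points, for each pair $1\le i<j\le k$ let $\mathcal{N}_{i,j}$ be the number of vertices of $\mathbb{Z}^d$ visited by both $X^{(i)}$ and $X^{(j)}$. Any point counted by $\mathcal{N}_k$ is visited by at least two walks, so it contributes to at least one $\mathcal{N}_{i,j}$. This gives the deterministic inequality
\[
\mathcal{N}_k \;\le\; \sum_{1\le i<j\le k}\mathcal{N}_{i,j}.
\]

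Next, a pigeonhole step: if $\mathcal{N}_k>k^2 m$, then the sum on the right exceeds $k^2 m$, and since there are $\binom{k}{2}\le k^2/2$ pairs, at least one pair satisfies $\mathcal{N}_{i,j}>2m$. A union bound together with Theorem~\ref{thm:intersections2rw} applied to each pair (the distribution of $\mathcal{N}_{i,j}$ is that of $\mathcal{N}_2$ for two independent walks, regardless of starting points, since Theorem~\ref{thm:intersections2rw} is stated for arbitrary starting points) then yields
\[
\prw(\mathcal{N}_k>k^2 m)\;\le\;\binom{k}{2}\,\prw(\mathcal{N}_2>2m)\;\le\;\tfrac{k^2}{2}\cdot C\exp\!\bigl(-\expo{e:intersections2rw}(2m)^{1-2/d}\bigr).
\]

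Finally I would simplify the exponent. Since $d\ge 5$ we have $1-2/d\ge 3/5$, so for $m\ge 1$ the inequality $(2m)^{1-2/d}\ge m^{3/5}$ holds (up to adjusting the constant $\expo{e:intersections2rw}$, which we may harmlessly redefine since the same symbol is already the one appearing in the statement). Absorbing the prefactor $C/2$ into the exponential (again using $m\ge 1$, possibly at the cost of a smaller $\expo{e:intersections2rw}$) produces the bound $k^2\exp(-\expo{e:intersections2rw}\, m^{3/5})$ claimed in the corollary. There is no real obstacle here; the only mildly delicate point is keeping track of constants so that the final exponent is genuinely $m^{3/5}$ with the same symbol $\expo{e:intersections2rw}$ as in Theorem~\ref{thm:intersections2rw}, which is a matter of convention rather than substance.
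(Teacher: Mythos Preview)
Your argument is correct and follows exactly the approach the paper itself indicates: a union bound over the $\binom{k}{2}$ pairs combined with Theorem~\ref{thm:intersections2rw}. Your remark that the constant $C$ from Theorem~\ref{thm:intersections2rw} must be absorbed (possibly by slightly shrinking $\expo{e:intersections2rw}$ or using the extra factor $2^{3/5}$ in the exponent) is in fact more careful than the paper's one-line sketch, which simply asserts the bound without tracking this constant.
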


\subsection{Chemical distance in the range of a random walk}\label{subsec"chem_random_walk}

In dimensions $d\geq5$, the high-dimensional setting, our understanding of the chemical distance in the range of a simple random walk is obtained by studying its cut-points. The arguments in this section are an adaptation to dimensions $d\geq 5$ of results obtained in \cite[Section 7.7]{Lawb} for dimension $d = 4$.

Let $X  =  (X (t))_{t \in \mathbb{Z}} $ be a doubly-infinite random walk. We call $t_0\in\mathbb{Z}$ a \emph{global cut-time} if
\begin{equation} \label{def:cut_point}
	X (-\infty, t_0] \cap X [t_0 + 1, \infty)  = \emptyset.
\end{equation}
In this case, we call $X(t_0)$  a \emph{global cut-point}.
The existence of global cut-points corresponds to the non-intersection event of two independent random walks.

The next lemma is a localization result, showing that it suffices to verify that the non-intersection property within a large, yet finite, number of steps.
For $-\infty\leq n<m\leq \infty$, we call $t_0\in [n,m-1]_\mathbb{Z}$  a \emph{local cut-time of $X$ of the interval $[n, m]_\mathbb{Z}$} if
\begin{equation}  \label{eq:def_local_cut}
	X [n, t_0] \cap X [t_0 + 1, m ] = \emptyset ,
\end{equation}
in which case we say that $X(t_0)$ is a \emph{local cut-point}.

\begin{lemma}[{c.f.~\cite[Lemma 7.7.3]{Lawb}}] \label{lemma:locality}
	Let  $d \geq 5$ and let $X$ and $X'$ denote independent simple random walks on $\mathbb{Z}^d$ starting at the origin. Then
	\begin{equation}\label{localization}
	\prw \left( X[0,k] \cap X'[1,k] = \emptyset \right)=
	\prw \left( X[0,+\infty) \cap X'[1,+\infty) = \emptyset \right)
	\cdot \left(1+O( k^\frac{4-d}{2}) \right),
	\end{equation}
	as $k\to\infty$.
\end{lemma}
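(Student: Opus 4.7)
Set $A_k := \{X[0,k]\cap X'[1,k] = \emptyset\}$ and $A_\infty := \{X[0,\infty) \cap X'[1,\infty) = \emptyset\}$. Since $A_\infty \subseteq A_k$,
\[
\prw(A_k) - \prw(A_\infty) \;=\; \prw(A_k \setminus A_\infty).
\]
The plan is to show $\prw(A_k \setminus A_\infty) = O(k^{(4-d)/2})$; dividing by the positive constant $\prw(A_\infty)$ (which is strictly positive in $d \geq 5$ by transience, cf.\ \cite[Corollary 4.3.3]{Lawb}) then yields the claimed multiplicative error $1 + O(k^{(4-d)/2})$.

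On $A_k \setminus A_\infty$ some intersection $X(s) = X'(t)$ occurs with $(s,t) \notin \{0,\dots,k\} \times \{1,\dots,k\}$, so a union bound yields
\[
\prw(A_k \setminus A_\infty) \;\leq\; \prw(\exists\, s > k,\, t \geq 1 : X(s) = X'(t)) \,+\, \prw(\exists\, s \geq 0,\, t > k : X(s) = X'(t)).
\]
Applying the Markov property at time $k$ to $X$ (and symmetrically to $X'$ for the second term), each of these is bounded by an expression of the form $\E\bigl[\prw_{X(k)}(Y[0,\infty) \cap X'[1,\infty) \neq \emptyset)\bigr]$, where, conditionally on $\mathcal{F}_k$, $Y$ is an independent simple random walk started at $X(k)$.

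The inner hitting probability is controlled by the first moment of intersections: using $\prw_x(z \in Y[0,\infty)) = G(x,z)/G(0,0) \leq C\,(1+|x-z|)^{2-d}$ together with the analogous bound for $X'$,
\[
\prw_x\!\bigl(Y[0,\infty) \cap X'[1,\infty) \neq \emptyset\bigr) \;\leq\; C \sum_{z\in\mathbb{Z}^d} (1+|x-z|)^{2-d}(1+|z|)^{2-d} \;\leq\; C'(1+|x|)^{4-d},
\]
the last inequality being the standard convolution-of-Green's-kernels estimate, which is finite precisely because $d \geq 5$. Finally, the local central limit theorem gives $\E[(1+|X(k)|)^{4-d}] \asymp k^{(4-d)/2}$ in $d \geq 5$: the typical contribution from $|X(k)| \asymp \sqrt{k}$ is of this order, while the atypical piece from $|X(k)| = O(1)$ contributes only $O(k^{-d/2})$, which is absorbed. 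Combining these bounds gives $\prw(A_k \setminus A_\infty) \leq C k^{(4-d)/2}$, as required.

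The main technical step is the Green's-function convolution estimate, and confirming that this is exactly where the hypothesis $d \geq 5$ enters the argument: the corresponding sum diverges when $d \leq 4$, which is why the $d = 4$ proof in \cite[Lemma 7.7.3]{Lawb} proceeds via a substantially more delicate cut-point analysis. The remaining ingredients, namely the moment estimate from the local CLT and the positivity $\prw(A_\infty) > 0$, are classical.
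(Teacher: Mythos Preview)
Your proof is correct and follows the same overall architecture as the paper: reduce to showing $\prw(A_k\setminus A_\infty)=O(k^{(4-d)/2})$, then bound the probability that an intersection occurs at a time index beyond $k$. The difference lies in how that tail probability is estimated. You condition on $X(k)$, invoke the Green's function convolution bound $\sum_z G(x,z)G(0,z)\preceq (1+|x|)^{4-d}$ (valid precisely for $d\geq 5$), and then average using $\E[(1+|X(k)|)^{4-d}]\asymp k^{(4-d)/2}$. The paper instead works purely in the time variable: using the identity $\prw(X(i)=X'(t))=\prw(X(i+t)=0)$, it gets $\prw(X(i)\in X'[0,\infty))\preceq \sum_{j\geq i}\prw(X(j)=0)\asymp i^{(2-d)/2}$ directly from the return probability estimate, and then sums over $i\geq k$. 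The paper's route is somewhat more elementary, needing only the local CLT bound $\prw(X(n)=0)\asymp n^{-d/2}$ rather than the spatial convolution estimate and the moment computation; your route is a bit heavier in prerequisites but arguably more transparent about \emph{where} the $d\geq 5$ hypothesis is used.
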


\begin{proof}
	First note that, since $d \geq 5$, by \cite[Theorem 3.5.1]{Lawb} 
	\begin{equation}\label{dim_five_cut_point}
	\prw \left( X[0,+\infty) \cap X'[1,+\infty) = \emptyset \right)>0,
	\end{equation}
	and thus it is enough to show that the difference of the probabilities is $O( k^\frac{4-d}{2}) $ as $k\to\infty$.

	Since
	\begin{equation}\label{local_triv_ineq}
		\prw \left( X[0,k] \cap X'[1,k] = \emptyset \right)\geq
		\\
		\prw \left( X[0,+\infty) \cap X'[1,+\infty) = \emptyset \right),
	\end{equation}
	it suffices to focus on the reverse inequality. To this end note that 
	\begin{equation}\label{loc_union_bound}
		\begin{split}
			&\prw\left( X[0,+\infty) \cap X'[1,+\infty) \neq \emptyset \right) 
			\\
			 \leq &\prw\left( X[0,k] \cap X'[1,k] \neq \emptyset \right)+
			\prw\left( X[0,k] \cap X'[k,+\infty) \neq \emptyset \right)\\
			 + &\prw\left( X[k,+\infty) \cap X'[1,k] \neq \emptyset \right)+
			\prw\left( X[k,+\infty) \cap X'[k,+\infty) \neq \emptyset \right) \\
		  \leq  &\prw\left( X[0,k] \cap X'[1,k] \neq \emptyset \right)+
			3 \prw\left( X[k, \infty) \cap X'[0,+\infty) \neq \emptyset \right).
		\end{split}
	\end{equation}
	Thus, in order to deduce \eqref{localization} from \eqref{local_triv_ineq} and \eqref{loc_union_bound},  we only need to show
	\begin{equation}
		\prw \left( X[k,+\infty) \cap X'[0,+\infty) \neq \emptyset \right) = 
		O( k^\frac{4-d}{2})
	\end{equation}
	as $k\to\infty$.

	Turning to prove the last estimation, note that reversibility, the Markov property of simple random walk and heat kernel estimations \cite[Section 4, Example 1]{Barlow2017} imply
	\begin{equation}\label{one_meets_whole}
		\begin{split}
		\prw \left( X (i) \in X'[0,+\infty) \right)
		&\leq \sum_{t=0}^\infty \prw \left( X (i)  = X'(t)   \right)
		= \sum_{t=0}^\infty \prw \left( X(i + t)  = 0 \right)
		  \\
		& =
		\sum_{j=i}^\infty \prw \left( X (j)=0 \right) \asymp i^{\frac{2-d}{2}},
		\end{split}
	\end{equation}
	and hence
	\begin{equation}
		\prw \left( X[k,+\infty) \cap X'[0,+\infty) \neq \emptyset \right) 
		\leq 
		\sum_{i=k}^\infty \prw \left( X (i) \in X'[0,+\infty) \right)
		\leq
		 C \sum_{i=k}^\infty i^{\frac{2-d}{2}} \asymp k^{\frac{4-d}{2}},  
	\end{equation}
	where the second inequality follows from~\eqref{one_meets_whole}.
\end{proof}


\begin{lemma}[{c.f.~\cite[Lemma 7.7.4]{Lawb}}]  \label{lemma:global_cut}
	Let $X^1$ and $X^2$ be two independent simple random walks on $\mathbb{Z}^d$ starting at the origin and let $X $ be the doubly-infinite simple random walk defined by $X^1$ and $X^2$, that is $X(t)=X_1(t)$ for $t\geq 0$ and $X(t)=X_2(-t)$ for $t\leq 0$. 
	There exist  $ \newexpo \label{cut} = \expo{cut} (d)  \in (0,\infty)$ and $C=C(d)\in (0,\infty)$ such that for every $N$
	\[
	\prw \left(   
				\begin{array}{c}
					\text{ There exists } j \in \{ 1, \ldots , N \} \text{ such that } \\
					X(-\infty, j] \cap X[j + 1, \infty) = \emptyset
				\end{array}
			 \right)
			 \geq 
			 1 - C(\log N)^{ \expo{cut} } N^{\frac{4-d}{2}}.
	\]
\end{lemma}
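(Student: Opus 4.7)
The plan is to locate $m=\Theta(\log N)$ well-separated candidate cut-times in $\{1,\ldots,N\}$, to show that each has uniformly positive probability of being a \emph{local} cut-point at scale $k$, and then to use Lemma~\ref{lemma:locality} to upgrade local cut-points to global ones at polynomially small cost. Balancing the geometric tail $(1-\theta)^m$ against the union-bound cost $m\cdot k^{(4-d)/2}$ of the conversions suggests the eventual choice $m\asymp \log N$ and $k\asymp N/\log N$.

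For integers $k,m$ with $(2k+1)m\leq N$, I would set $j_i:=(2k+1)i$ for $i=1,\ldots,m$ and introduce the local and global cut-point events
\[
A_i:=\{X[j_i-k,j_i]\cap X[j_i+1,j_i+k]=\emptyset\},\qquad B_i:=\{X(-\infty,j_i]\cap X[j_i+1,\infty)=\emptyset\}.
\]
Each $A_i$ is a translation-invariant function of the increments of $X$ on the interval $[j_i-k,j_i+k-1]$, and these increment intervals are disjoint across $i$ by the choice of spacing, so the events $\{A_i\}_{i=1}^m$ are mutually independent. Note also that $B_i\subseteq A_i$.

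The main reduction is as follows. Conditioning on $X(j_i)=x$, the reversibility of SRW together with the independence of the two half-walks $X^1,X^2$ make the reversed past $Y^1(t):=X(j_i-t)$ and the future $Y^2(t):=X(j_i+t)$, $t\geq 0$, two independent simple random walks starting at $x$; this is verified by a direct check that all the relevant increments are i.i.d.\ (the reversal of $X^1[0,j_i]$ is an SRW from $x$ reaching $0$ at time $j_i$, and concatenating with $X^2$ at $0$ extends it to an infinite SRW). After translating by $-x$,
\[
\prw(A_i)=\prw\bigl(Y^1[0,k]\cap Y^2[1,k]=\emptyset\bigr),\qquad \prw(B_i)=\prw\bigl(Y^1[0,\infty)\cap Y^2[1,\infty)=\emptyset\bigr),
\]
with $Y^1,Y^2$ now independent SRWs from $0$. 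By~\eqref{dim_five_cut_point}, $\prw(B_i)\geq\theta$ for some $\theta=\theta(d)>0$; monotonicity in $k$ gives $\prw(A_i)\geq\theta$ uniformly, and Lemma~\ref{lemma:locality} applied to $(Y^1,Y^2)$ yields $\prw(A_i)-\prw(B_i)=\prw(B_i)\cdot O(k^{(4-d)/2})=O(k^{(4-d)/2})$.

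Combining independence with $B_i\subseteq A_i$ and a union bound,
\[
\prw\Bigl(\bigcap_{i=1}^m B_i^{c}\Bigr)\leq \prw\Bigl(\bigcap_{i=1}^m A_i^{c}\Bigr)+\sum_{i=1}^m\prw(A_i\setminus B_i)\leq (1-\theta)^m+C\,m\,k^{(4-d)/2}.
\]
Taking $m=\lceil C_1\log N\rceil$ with $C_1$ large enough that $(1-\theta)^m\leq N^{(4-d)/2}$, and $k=\lfloor N/(2m+1)\rfloor\asymp N/\log N$, bounds the second term by $C\log N\cdot(N/\log N)^{(4-d)/2}=C(\log N)^{(d-2)/2}N^{(4-d)/2}$, yielding the claimed inequality with $\xi=(d-2)/2$. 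I expect the main technical hurdle to be the reduction in the previous paragraph: the identification of the bi-directional cut-point events at $j_i$ with the non-intersection events of two independent SRWs requires care with the reversibility of the first half-walk and its concatenation with $X^2$ at the origin, but once this is in place the rest is a routine union bound and optimization.
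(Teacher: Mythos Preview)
Your proof is correct and follows essentially the same approach as the paper: place $m=\Theta(\log N)$ candidate times at spacing $\asymp k$, use independence of the local cut-point events $A_i$, and upgrade to global cut-points via Lemma~\ref{lemma:locality}. The only difference is bookkeeping: the paper packages the local and global counts as random variables $Y^*\geq Y$ and bounds $\prw(Y=0)$ via binomial concentration of $Y^*$ together with Markov's inequality on $\mathbf{E}(Y^*-Y)/\mathbf{E}(Y^*)$, whereas you bound $\prw(\bigcap B_i^c)$ directly by $(1-\theta)^m+\sum_i\prw(A_i\setminus B_i)$; this costs you an extra $\log N$ factor (yielding $\xi=(d-2)/2$ instead of the paper's $(d-4)/2$), but the statement only asserts existence of some $\expo{cut}$, so both are fine.
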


\begin{proof}
	Let us choose some $1\leq k \leq N$ (to be specified later) and define 
	\[M=\frac{N}{2k}.\]

	Let us  define the random variables $Y$ and $Y^*$ by
	\begin{align}
	Y &= \sum_{j=1}^M \ind { \{ 2jk \text{ is a global cut-time}  \} },\\
	Y^* &= \sum_{j=1}^M \ind {\{  2jk \text{ is a local cut-time in } X[2jk-k,2jk+k] \}}.
	\end{align}
	Thus $Y$ counts the number of global cut-points among the points $ 2k, 4k, 6k, \dots, 2Mk$ and $Y^*$
	is a local version of $Y$. Now note that $Y^*$ has binomial distribution and that $Y^* \geq Y$. 
	Hence, we bound the probability of the event $Y=0$ in terms of $Y^*$:
	\begin{equation} \label{Y_star_break}
		\prw(Y=0)\le	
		\prw \left( Y < \frac {\mathbf{E} (Y^*)}{4} \right) \leq 
		\prw  \left( Y^* \leq \frac{\mathbf{E}(Y^*)}{2} \right)
		+
		\prw \left( Y^*-Y  \geq \frac{\mathbf{E}(Y^*)}{4} \right).
	\end{equation}

	We now turn to estimate each of the terms on the right hand side of \eqref{Y_star_break}. Starting with the former, since $Y^*$ is a binomial random variable with expectation $CM$ for some $C>0$ (c.f.\ \eqref{dim_five_cut_point}), it follows that 
	\begin{equation} \label{eq:Y_estimate_1}
		\prw \left( Y^* \leq \frac{\mathbf{E}(Y^*)}{2} \right) \leq e^{-cM}.
	\end{equation}

	As for the second term, by Markov's inequality and Lemma~\ref{lemma:locality}:
	\begin{equation} \label{eq:Y_estimate_2}
		\prw \left( Y^*-Y  \geq \frac{\mathbf{E}(Y^*)}{4} \right)
		\leq 
		\frac{ \mathbf{E}\left( Y^*-Y  \right) }{ \mathbf{E}(Y^*)/4 } 
		\leq 
		 C \, \frac{ M \cdot k^\frac{4-d}{2} }{ M  }= C k^\frac{4-d}{2}.
	\end{equation}
	The bounds~\eqref{eq:Y_estimate_1} and~\eqref{eq:Y_estimate_2}, over the terms of the right-hand side of \eqref{Y_star_break}, give
	\[
		\prw \left( Y < \frac {\mathbf{E} (Y^*)}{4} \right)
		\leq 
		e^{ -c M }+  C \left( \frac{N}{2M} \right)^\frac{4-d}{2}.
	\]
	Choosing $M= \frac{ d-4 }{ 2 c'} \log(N)$  gives
	\begin{equation}
	\begin{aligned}
		&\prw \left( 
		\; \not \exists \, j \in \{1,\dots,N\} \; : \;  X(-\infty,j] \cap X[j+1,\infty) =\emptyset 
		\right) \\
		\leq	&
	 	\prw \left( Y=0 \right) 
	 	\leq 
	  C (\log(N))^{\expo{cut} } \cdot N^{\frac{4-d}{2}}
	 \end{aligned}
	\end{equation}
	for some $\expo{cut}\in (0,\infty)$ and $C\in (0, \infty)$.
\end{proof}

\begin{prop}\label{prop:cutdensity}
	Let $X$ be a simple random walk on $\mathbb{Z}^d$. For $n\geq 3$, let $Z_n$ denote the number of global cut-points in $X[0,n]$.  There exist constants $\newcnt \label{c:cutdensity} = \cnt{c:cutdensity} (d) > 0$ and $\newexpo \label{dense} = \expo{dense}  \in (0,\infty)$  such that
	\begin{equation} \label{eq: linear_number_of_cut_points}
		\prw \left( Z_n \leq \cnt{c:cutdensity} \cdot n \right) \leq (\log (n) )^{\expo{dense}} \cdot n^{\frac{4 - d}{2} },\qquad \text{for every }n\in\mathbb{N}.
	\end{equation}
\end{prop}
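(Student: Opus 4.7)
The plan is to follow the structure of the proof of Lemma~\ref{lemma:global_cut}, but with the spacing $k$ chosen as a large dimension-dependent \emph{constant} (rather than growing with $n$), so that the number of candidate positions is linear in~$n$. This will produce a linear number of global cut-points, but it forces me to replace the elementary Markov step at the end of that proof with a higher-moment estimate, which is the real heart of the argument.

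Concretely, fix $k$ large enough that $C\,k^{(4-d)/2}\le p_0/8$, where $p_0:=\lim_{k\to\infty}\prw(X[0,k]\cap X[1,k]=\emptyset)$ is positive by~\eqref{dim_five_cut_point} and $C$ is the constant from Lemma~\ref{lemma:locality}. Set $M:=\lfloor n/(2k)\rfloor$, write $j_\ast:=2jk$ for $j=1,\ldots,M$, and, following the proof of Lemma~\ref{lemma:global_cut}, define $A_j:=\{j_\ast\text{ is a local cut-time of }X[j_\ast-k,j_\ast+k]\}$, $B_j:=\{j_\ast\text{ is a global cut-time}\}$, $U_j:=\1_{A_j\setminus B_j}$, $Y^*:=\sum_j\1_{A_j}$ and $Y:=\sum_j\1_{B_j}\le Z_n$. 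Because the windows $[j_\ast-k,j_\ast+k]$ are disjoint, the indicators $\1_{A_j}$ are i.i.d.\ with common mean at least $p_0/2$, so Hoeffding's inequality yields $\prw(Y^*\le Mp_0/2)\le e^{-cM}=e^{-cn/(2k)}$, which is much smaller than the target $n^{(4-d)/2}$. Since $Z_n\ge Y=Y^*-\sum_j U_j$, it then suffices to show that $\sum_j U_j\le Mp_0/4$ holds with probability at least $1-(\log n)^{\expo{dense}}n^{(4-d)/2}$; this will give the proposition with $\cnt{c:cutdensity}=p_0/(8k)$.

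The main step, and the main obstacle, is the upper tail estimate for $\sum_j U_j$. The first-moment Markov estimate used in Lemma~\ref{lemma:global_cut} only gives the non-decaying bound $4C k^{(4-d)/2}/p_0$. Setting $r:=\lceil(d-4)/2\rceil$, the plan instead is to establish a Rosenthal-type centered moment inequality
\[
\mathbf{E}\Bigl[\Bigl(\sum_j U_j-\mathbf{E}\sum_j U_j\Bigr)^{2r}\Bigr]\le C_r(\log n)^{\xi_r}(Mp_U)^r,\qquad p_U:=\mathbf{E}[U_j]\le Ck^{(4-d)/2},
\]
matching the behavior one would obtain in the independent Bernoulli case. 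Combined with Markov's inequality applied at order $2r$, together with the fact that $\mathbf{E}\sum_j U_j\le Mp_U\le Mp_0/8$, this yields $\prw(\sum_j U_j\ge Mp_0/4)\lesssim (\log n)^{\xi_r}M^{-r}\lesssim (\log n)^{\xi_r}n^{(4-d)/2}$ (using $r\ge (d-4)/2$), as required.

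Justifying the moment inequality is the technical crux. The event $\{U_j=1\}$ encodes a self-intersection of the bi-infinite walk at time-scale at least $k$ crossing the point $j_\ast$, a ``long-range'' event; for $j,j'$ far apart the required intersections involve disjoint portions of the walk, and the strong Markov property then makes $U_j$ and $U_{j'}$ approximately independent. Concretely, I plan to expand $\bigl(\sum_j U_j-\mathbf{E}\sum_j U_j\bigr)^{2r}$ as a sum over $2r$-tuples of indices, group the terms by the cluster structure of their witnessing intersections, and control each cluster's contribution by a product of single-index probabilities of order $p_U$, summing over the possible time-scales of the witnessing self-intersections (which is what will produce the polylogarithmic factor in $n$, parallel to the $\log N$ factor already appearing in Lemma~\ref{lemma:global_cut}).
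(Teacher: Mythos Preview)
Your approach is genuinely different from the paper's, and the difference is instructive.

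The paper avoids your Rosenthal-type moment computation entirely by a two-level ``buffer-zone'' trick. It partitions $[0,n]$ into $M\asymp\log n$ blocks of length $3N$ (so $N\asymp n/\log n$), each block split into left, middle, and right thirds of length $N$. Applying Lemma~\ref{lemma:global_cut} with a union bound over the $2M$ left/right thirds, the event $\mathfs{A}=\{\text{every left and right third contains a global cut-time}\}$ fails with probability $\le CM(\log N)^{\xi}N^{(4-d)/2}\asymp(\log n)^{\xi+1}n^{(4-d)/2}$. On $\mathfs{A}$, any \emph{local} cut-time of the middle third (relative to its own block) is automatically a \emph{global} cut-time, because the surrounding global cut-points in the left and right thirds sever the walk. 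Since local cut-point counts in disjoint blocks are genuinely i.i.d., a trivial large-deviation bound handles the remaining estimate. No control of correlations among the $U_j$ is ever needed.

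Your route is more direct but shifts all the difficulty into the moment inequality $\mathbf{E}\bigl[(\sum_j U_j-\mathbf{E}\sum_j U_j)^{2r}\bigr]\le C_r(\log n)^{\xi_r}(Mp_U)^r$, which you only sketch. The concern is that the $U_j$ are strongly positively correlated: a single long-range self-intersection $X(s)=X(t)$ with $t-s=L$ can witness $U_j$ for \emph{all} $\sim L/(2k)$ indices $j$ with $s\le j_\ast\le t$, so the cluster-expansion you describe must track not only how many distinct intersection events occur but also their time-spans. This is doable in principle (the scenario $X(0)=X(n)$ contributes $\sim n^{2r}\cdot n^{-d/2}\le n^r$ with your choice $r=\lceil(d-4)/2\rceil$, so the arithmetic is consistent), but it is a nontrivial random-walk intersection moment calculation that you have not carried out. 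The paper's buffer-zone argument is both shorter and self-contained given Lemma~\ref{lemma:global_cut}; its cost is the slightly worse polylogarithmic exponent coming from the union bound over $M\asymp\log n$ blocks.
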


\begin{proof}
	The idea of the proof is to divide the time-interval $[0, n]$ into $O(\log(n))$ segments. 
	On each segment, we guarantee the existence of two global cut-points, and between them we count local cut-points. This structure will guarantee that the local cut-points are in fact global cut-points.

	For each $n \geq 3$, let $ N \leq n/3 $ be the length in which we partition $[0,n]_{\mathbb{Z}}$ following the next construction. First we partition into segments of length $3N$. 
	For each of the $i$-th segments of length $3N$, we further sub-divide them into three sub-segments of length $N$: $ l_i$, $m_i$ and $r_i$, which we call left, middle and right segments, respectively (see Figure~\ref{fig:cut_point}).
	With 
	\begin{equation} \label{eq:M_choice}
		M = \left\lfloor \frac{n}{3N}  \right\rfloor, 
	\end{equation} 
	we obtain the partition of $[0,n]_{\mathbb{Z}}$ into the segments $ l_1, m_1, r_1, \ldots, l_M, m_M$, and $ r_M  $.

	\begin{figure}[h]
		\begin{center}
		
		\begin{tikzpicture}[scale=0.8]
		\draw [,thick](0,0) -- (12,0);
		
		\draw [,thick](0.0 , .40) -- (0.0 , -.40);
		\draw [,thick](1.5 , .25) -- (1.5 , -.25);
		\draw [,thick](3.0 , .25) -- (3.0 , -.25);
		\draw [,thick](4.5 , .40) -- (4.5 , -.40);

		\draw (.75,0) node[above] {$l_1$};
		\draw (2.25,0) node[above] {$m_1$};
		\draw (3.75,0) node[above] {$r_1$};

		\draw (6,0) node[above] {\Large$\ldots$};

		\draw ( 8.25,0) node[above] {$l_M$};
		\draw ( 9.75,0) node[above] {$m_M$};
		\draw (11.25,0) node[above] {$r_M$};

		\draw [,thick]( 7.5,.40) -- ( 7.5,-.40);
		\draw [,thick]( 9.0,.25) -- ( 9.0,-.25);
		\draw [,thick](10.5,.25) -- (10.5,-.25);
		\draw [,thick](12.0,.40) -- (12.0,-.40);

		\draw [decorate, thick, decoration = {calligraphic brace, mirror,raise=5pt, amplitude=10pt}] (0.0,-.40) --  (4.5,-.40) node[pos=.5,below=15pt,black]{$N$};
		
		\end{tikzpicture}

		\caption{Partition of the time interval $[0, n]$ in the proof of Proposition~\ref{prop:cutdensity}.}
		\label{fig:cut_point}
		\end{center}
	\end{figure}
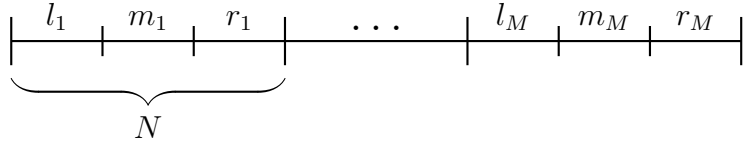

	Let 
	$	
		\mathfs{A} 
	$
	be the event that each of the $2M$ left and right sub-segments contain a global cut-point.
	By Lemma~\ref{lemma:global_cut} and a union bound 
	\begin{equation} \label{eq:prob_A}
		\prw \left(  \mathfs{A}  \right) \geq  1 - CM(\log N)^{ \expo{cut} } N^{\frac{4-d}{2}} .
	\end{equation}

	Let $\tilde{Z}_i$ be the number of local cut-points at $m_i$ of the interval $l_i \cup m_i \cup r_i$. 
	Recall that~\cite[Equation (3.2)]{Lawb} 
	\[
		a = \prw \left(  \text{ the origin is a global cut-point of a doubly-infinite random walk}  \right) > 0,
	\]
	and note that $ \mathbf{E} (\tilde{Z}_i) \geq aN $. Since we also have the bound $\prw \left( \tilde{Z}_i \leq N \right) = 1$, it follows that
	\[
		aN \leq \mathbf{E} \left(\tilde{Z}_i \right) \leq   \frac{a}{2} N \cdot \prw \left( \tilde{Z}_i < \frac{a}{2}N \right) 	
																	  + N \cdot \prw \left( \tilde{Z}_i > \frac{a}{2} N \right)
															 \leq \frac{a}{2} N + N \cdot \prw \left( \tilde{Z}_i > \frac{a}{2} N \right).
	\]
and hence the inequality above implies that
	\[
		\prw \left(B_i \right) \geq \frac{a}{2},
	\]
	where $B_i \coloneqq \left\{ \tilde{Z}_i \geq aN / 2 \right\}$.
	
	By the definition of local cut-point in~\eqref{eq:def_local_cut}, the random variables $(\tilde{Z}_i)_{i=1}^M$ are i.i.d. Thus, there exists some $b \in (0,1)$ such that 
\begin{equation} \label{eq:local_cut_est}
		\prw \left( \sum_{i = 1}^M \tilde{Z}_i  \geq  \frac{a^2}{32}\,  n \right)\geq \prw \left( \sum_{i = 1}^M \ind{B_i }  \geq M \, \frac{a}{4} \right)
		\geq  1 -  b^M.
\end{equation}
	
	On the event $\mathfs{A}$, for each $1  \leq i \leq M $ the left segment $l_i$ as well as the right segment $r_i$ contain a global cut-point. Then, if there is a point  in the middle segment $j \in m_i$ which a local cut-point of the interval $ l_i \cup m_i \cup r_i $ then  $j$ is actually a global cut-point. In particular, on the event $\mathfs{A}$ we have 
	\[
		Z_n \geq \sum_{i = 1}^M \tilde{Z}_i.
	\]
	Therefore, by the estimates in~\eqref{eq:prob_A} and~\eqref{eq:local_cut_est} we get:
	\[
		\prw \left( Z_n \le  \frac{a^2}{32}\,  n \right) \leq 2M(\log N)^{ \expo{cut} } N^{\frac{4-d}{2}} + b^M.
	\]
	Now we choose $N$ by setting $ M = C \log (n) $, for some constant $ C\in (0, \infty)$ large enough (depending only on the dimension). Then the conclusion of the proposition follows.
\end{proof}

\section{Upper bound on the chemical distance in random interlacements} \label{sec:upper_bound}

In this section we prove Theorem~\ref{thm:main_upper}. The key ingredient in the proof is an upper-bound when the direction of the time constant is $x = e_1 \coloneqq (1, 0 , \ldots , 0) \in \mathbb{Z}^d$. 

\begin{prop} \label{prop: upper bound}
	Consider random interlacements on $\mathbb{Z}^d$ with $d \geq 5$.
	There exist $ \newuBound \label{u:propUpper} = \uBound{u:propUpper}(d) \in (0,1) $ and a constant  $ C  = C(d, \uBound{u:propUpper}) \in (0,\infty) $ such that for all $u \in (0 ,  \uBound{u:propUpper}]$
	\begin{equation} \label{eq: upper bound}
		\rho_u (e_1) \leq  \frac{C}{\sqrt{u}}.
	\end{equation}
\end{prop}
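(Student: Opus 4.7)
The plan is to implement the renormalization sketched in the introduction. Set $L := \lfloor c_0\, u^{-1/2}\rfloor$ for a constant $c_0$ to be calibrated, and consider the sequence of boxes $B_k := B_{k L e_1}(L)$ for $k=0,1,\ldots,N$ with $N:=\lfloor n/L\rfloor$. The objective is to construct, with probability $\to 1$ as $n\to\infty$, a chain of points $x_0,x_1,\ldots,x_N\in\cI^u$ with $x_k\in B_k$ and $d_u(x_k,x_{k+1})\le M L^2$ for a constant $M=M(d)$. This yields $d_u([0]_u,[n e_1]_u)\le M N L^2+O(L)=O(n L)=O(n/\sqrt u)$, and dividing by $n$ and appealing to the a.s.\ and $L^1$ convergence of the time constant gives $\rho_u(e_1)\le C/\sqrt u$ as required.

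The essential building block is a local connection lemma: for any pair of adjacent boxes $B=B_z(L)$ and $B'=B_{z+Le_1}(L)$, with probability $\to 1$ as $u\to 0$ there exist $x\in B\cap\cI^u$ and $y\in B'\cap\cI^u$ with $d_u(x,y)\le M L^2$. The tool is Proposition \ref{prop: K-intersection} applied to $K:=B\cup B'$: the trajectories of $\RI^u$ hitting $K$ are, in distribution, $N_K$ i.i.d.\ simple random walks started from the normalized equilibrium measure $\tilde e_K$, with $N_K\sim\Poi(u\cdot\capa(K))$. By Proposition \ref{prop:capa_ball}, $\capa(K)\asymp L^{d-2}$, so $\E[N_K]\asymp u\, L^{d-2}\asymp u^{(4-d)/2}\to\infty$ as $u\to 0$ when $d\ge 5$. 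Moreover, Proposition \ref{prop: RW escape} shows that each walker remains within a constant dilation of $K$ for at most $O(L^2)$ steps with overwhelming probability, so once two ranges meet, any fixed entry points in $B$ and $B'$ lie at chemical distance at most $O(L^2)$ from the intersection point.

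The main obstacle is the intersection event itself: in dimensions $d\ge 5$ two independent simple random walks of length $\asymp L^2$ starting at Euclidean distance $\asymp L$ intersect with low probability (a heat-kernel computation gives an expected number of intersections of order $L^{4-d}$). The compensating factor is the large number of trajectories: the number of ordered pairs is of order $N_K^2\asymp L^{2(d-4)}$, and hence the expected total number of intersecting pairs with one walker entering $B$ and the other entering $B'$ is of order $L^{d-4}$, which diverges as $u\to 0$. To upgrade this first-moment estimate to a high-probability statement, I would run a second-moment (Chebyshev) argument, controlling the second moment through hitting estimates on the range of one walker viewed as a target set for another (Proposition \ref{prop: RW and set}) together with capacity bounds on the range (Theorem \ref{thm: capacity_lln}). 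This is the most delicate point of the proof.

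Finally, to assemble the chain from $0$ to $n e_1$ I would take a union bound over the $N\asymp n/L$ adjacent box pairs. Correlations between distant pairs are controlled by Theorem \ref{thm:RI_decorrelation} applied to slightly dilated unions, or alternatively by splitting the indices into a finite number of interleaved classes of well-separated boxes so that within each class the events are dominated by a product measure. Choosing $c_0$ large, the per-pair failure probability becomes polynomially small in $L$, which comfortably beats the $N=O(n/L)$ union bound for each fixed $u\in(0,\uBound{u:propUpper}]$, so the chain exists with probability $\to 1$ as $n\to\infty$. Combined with the reduction of the first paragraph, this concludes the proof.
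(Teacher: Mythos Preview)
Your local building block is on the right track and closely matches the paper's Lemma~\ref{lem:blackboxbound}: with probability at least $1-e^{-\eta\alpha^2}$ (where $\alpha$ plays the role of your $c_0$), a trajectory based in one box meets a designated good trajectory in the adjacent box within $O(L^2)$ steps. The problem is the assembly step. For fixed $u$ the scale $L$ is fixed, so ``polynomially small in $L$'' is just a fixed constant $p>0$, while the number of links is $N\asymp n/L\to\infty$. A union bound then gives failure probability $\leq N p\to\infty$, and even under exact independence the success probability is $(1-p)^N\to 0$. Neither decorrelation (Theorem~\ref{thm:RI_decorrelation}) nor splitting into interleaved classes helps here: the obstruction is not correlation but the law of large numbers --- a linear-in-$n$ number of independent Bernoulli trials with fixed failure probability will a.s.\ contain failures. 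In a one-dimensional chain a single broken link kills the construction, so your argument cannot produce the chain with any probability bounded away from $0$ as $n\to\infty$.

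The paper resolves this by working in a two-dimensional slab $\mathbb{F}^{R_u}$ rather than along a line. Two ideas are essential. First, good random walks at distinct renormalized sites are made \emph{genuinely independent} by conditioning each trajectory to escape the slab in the transverse $(d-2)$ directions and never return (Lemma~\ref{lemma: prob F-escape}; this is where $d\ge 5$ enters). Second, instead of demanding that every link succeed, an exploration process (Algorithm~\ref{alg:exploration}) traces the outer boundary of the ``black'' (failed) region in the $*$-connected 2D grid, routing around failures. A Peierls argument (Lemma~\ref{lem:blackregionbound}) shows that with probability $>1/2$ the black region has at most $8n$ boxes, so the resulting arrow path has length $O(n)$ boxes and hence chemical length $O(nR_u^2)$. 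Since $\rho_u(e_1)$ is an almost-sure constant, a positive-probability bound suffices. Your proposal is missing precisely this two-dimensional detour mechanism; without it the global step does not go through.
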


The proof of Proposition~\ref{prop: upper bound} is based on a renormalization argument, and is partially inspired by an argument in \cite[Section 4]{ErhardPoisat}. 
The main idea is to construct (for a given $n\in\mathbb{N}$) a path between $0$ and $n e_1$, whose length is bounded by $Cnu^{-1/2}$, provided $n$ is large enough.
Before proceeding with the arguments for constructing such a path, see Subsections~\ref{subsec:upper-renormalization} and~\ref{subsec: proof of upper bound - case 1} below, let us show how Proposition~\ref{prop: upper bound} implies Theorem~\ref{thm:main_upper}. 

\begin{proof}[{Proof of Theorem~\ref{thm:main_upper} given Proposition~\ref{prop: upper bound}}]
	Let $\{   e_1, \ldots, e_{d} \} $ be the canonical basis of $\mathbb{R}^d$. 
	Since the distribution of the simple random walk is invariant under rotations of $\mathbb{Z}^d$, the distribution of random interlacements is invariant under these rotations as well. Consequently, Proposition~\ref{prop: upper bound} implies that for any $u \in (0, \uBound{u:propUpper}]$
	\[
			\rho_u \left( \pm e_j \right) \leq  \frac{C}{\sqrt{u}}, \qquad \text{ for } j = 1, \ldots, d.
	\]
Furthermore, if $u \in (\uBound{u:propUpper}, 1]$ then by monotonicity of random interlacements in $u$, for each $j = 1, \ldots, d$ 
	\[  
		\rho_u ( \pm e_j) \leq \rho_{\uBound{u:propUpper}} (\pm e_j) \leq \frac{C}{\sqrt{\uBound{u:propUpper}}}
		=
		\frac{C}{\sqrt{\uBound{u:propUpper}}} \cdot \frac{\sqrt{u}}{\sqrt{u}}
		\leq \frac{C}{\sqrt{\uBound{u:propUpper}}} \cdot \frac{1}{\sqrt{u}}=\frac{\hat{C}}{\sqrt{u}},
	\]
	where $\hat{C}=\hat{C}(d,\uBound{u:propUpper})\in (0,\infty)$ is a constant.
	
	Finally, since $\rho_u$ is a norm, by the triangle inequality, there exists a constant $C'=C'(d,\uBound{u:propUpper})\in (0,\infty)$ such that for every $x \in \mathbb{R}^d$
	\[
		\rho_u (x) \leq \sum_{i = 1}^d \vert x_i \vert \cdot \rho_u (e_i)
		\leq  \frac{\hat{C}}{\sqrt{u}} \,  \vert x \vert_1 \leq   \frac{C'}{\sqrt{u}}  \, \vert x \vert_{2} , 
 	\]
 	where the last inequality follows from the equivalence of norms on $\mathbb{R}^d$. The theorem follows with $ \Cnt{c:up} =  C'$.
\end{proof}

\subsection{Renormalization} \label{subsec:upper-renormalization}

We begin by introducing the setting for the renormalization. 
For a density parameter $u\in (0,1)$ and a constant $\alpha > 1$, we set 
\begin{equation} \label{eq: R upper bound}
		R_u \equiv R_u^{(\alpha)}\coloneqq \left\lceil \frac{\alpha}{\sqrt{u}} \right\rceil .
\end{equation}
The constant $\alpha$ is a large number that is chosen in the proof of Proposition~\ref{prop: upper bound}.

Let 
\[
	\mathbb{F}^{R_u}   \coloneqq \{  (z_1, \ldots, z_d) \in \mathbb{Z}^d  \st z_1, z_2 \in (2R_u ) \cdot \mathbb{Z}, \text{ and } z_j = 0 \text{ for } j = 3, \ldots ,d \}
\]
be a renormalization of a two-dimensional subgraph of $\mathbb{Z}^d$:
\[
	\mathbb{F}  \coloneqq \{  (z_1, \ldots, z_d) \in \mathbb{Z}^d  \st z_1, z_2 \in  \mathbb{Z}, \text{ and } z_j = 0 \text{ for } j = 3, \ldots ,d \}.
\] 
We make $\mathbb{F}^{R_u}$ into a graph by defining a set of un-oriented edges (including diagonal directions):
\[
 \mathbb{E}^{R_u} \coloneqq \{  (z, y) \in  \mathbb{F}^{R_u} \st \vert z - y \vert_\infty = 2R_u \}.
\]
The pair $(\mathbb{F}^{R_u}, \mathbb{E}^{R_u})$ defines a $2$-dimensional lattice (sometimes called  the $*$-connected square grid).

Consider the union of boxes, as defined in~\eqref{eq: box}, centred at vertices in $\mathbb{F}^{R_u}$: 
\[
	{S}^{R_u} \coloneqq \bigcup_{z \in \mathbb{F}^{R_u}} B_z(R_u).
\]
The set ${S}^{R_u}$ is contained in a slab of two-dimensional subgraphs of $\mathbb{Z}^d$. 
This slab consists of $\mathbb{F}$ and its integer translations $ t + \mathbb{F}  \subset \mathbb{Z}^d $ where $ t = (0,0,t_3, \ldots , t_d)  $ with $t_i \in \{  -(R_u - 1), \ldots  , R_u - 1 \}$ for all $3\leq i\leq d$.

We first need an estimate on the escape probability of a random walk from $S^{R_u}$.  Because $d \geq 5$, a random walk on $\mathbb{Z}^{d-2}$ is transient and thus it suffices to verify escape from $0$ in the last $(d-2)$-coordinates by comparing with a continuous time random walk (see~\cite{Barlow2017} for further background). Hence we have the following bound.

\begin{lemma} \label{lemma: prob F-escape}
	Let $\alpha > 1$ and let $R_u$ as in~\eqref{eq: R upper bound}.
	Let $z \in \mathbb{F}^{R_u}$ and consider a simple random walk $X$ starting at  $x \in  \partial_i B_z  (R_u) \cap \partial_i S^{R_u}$. Then there exists a constant $\newet\label{et:1}=\et{et:1}(d)>0$ such that
	\begin{equation}\label{eq:escapeprob}
		\inf_{ 0 < u < 1 } \prw_x \left( H_{ S^{R_u} \setminus B_z (R_u)}  = \infty  \right) \geq \inf_{ 0 < u < 1 } \prw_x \left( H_{ S^{R_u}}  = \infty  \right) \geq\et{et:1} 
		.
	\end{equation}
\end{lemma}

We will also make use of a conditional form of Lemma \ref{lemma: prob F-escape}, which follows immediately from \eqref{eq:escapeprob}.
\begin{cor}\label{cor:escapecoditioned}
Let $\alpha > 1$ and let $R_u$ as in~\eqref{eq: R upper bound}.
	Let $z \in \mathbb{F}^{R_u}$ and consider a simple random walk $X$ starting at  $x \in  \partial_i B_z  (R_u) \cap \partial_i S^{R_u}$. Then
		\begin{equation}\label{eq:escapeprobcinditioned}
		\inf_{ 0 < u < 1 } \prw_x \left( H_{ S^{R_u} \setminus B_z (R_u)}  = \infty  \Big|H_{ S^{R_u}}  = \infty \right) \geq\et{et:1} 
		.
	\end{equation}
\end{cor}

For each vertex $z \in \mathbb{F}^{R_u}$, consider the collection of trajectories 
\begin{equation} \label{eq: trajectories z}
	\left\{ w^* \in \supp (\RI^{u}) \st  H_{ B_z(R_u) } (w^*) < \infty \right\}.
\end{equation}
Each trajectory in the set above defines a path
$\tilde{X} = w^* \vert_{\ge H_{ B_z(R_u) } }$, where the initial point of $\tilde{X}$ is the first hitting time of $w^*$ in $ B_z(R_u) $. The collection of paths defined in~\eqref{eq: trajectories z} corresponds to $\supp \vec{\RI}^u_{ B_z(R_u)}$, defined in~\eqref{eq: measure paths at K}.
The path $\tilde{X} \in \supp \vec{\RI}^u_{ B_z(R_u)}$  has law $ \prw_{\tilde{e}_{B_z(R_u)}} $, which corresponds to a simple random walk with initial distribution $\tilde{e}_{B_z(R_u)}$ (see~\eqref{eq: normalized equilibrium measure} for the definition of normalized equilibrium measure).

\begin{definition} \label{def: good vertices}
	For $d \geq 5$, fix $\gamma,M>0$ such that 
	\begin{enumerate}
		\item $\gamma<\gamma_d$, where $ \gamma_d$ is as in Theorem~\ref{thm: capacity_lln}.
		\item $\prw (  \range (\tilde{ X} [0, \frac{1}{2}R_u^2] )  \subset B_z(M R_u)) > 1 - \et{et:1}/2$, where $\eta_2$ is as in Lemma \ref{lemma: prob F-escape}. 
	\end{enumerate}
	Note that such a choice for $M$ exists due to Proposition \ref{prop: RW escape}.
	
	We say that a {finite path $\tilde{X}[0,R_u^2] $} is a \emph{good random walk} based at $z \in \mathbb{F}^{R_u}$ (or more precisely $(\gamma,M)$-good), if it satisfies the following conditions (see Figure~\ref{fig: good boxes} for an illustration): 
	\begin{enumerate}[label=(\roman*)]
		\item \label{item: good rw 0}  $\tilde{X}=\vec{s}^{\, *}_{B_z(R_u)}(w^{*})\in \supp \vec{\RI}^u_{ B_z(R_u)}$ for some $w^*\in{\RI}^u_{ B_z(R_u)}$.
		\item \label{item: good rw hit} $H_{  S^{R_u}  \setminus { B_z(R_u) }  } (w^*)>H_{ { B_z(R_u) }  } (w^*)$,
		\item \label{item: good rw 1} $\range (\tilde{ X} [0, \frac{1}{2}R_u^2] )  \subset B_z(M R_u)$,  
		\item \label{item: good rw 2} $\capa (\range ( \tilde{X} [0, \frac{1}{2}R_u^2] ) \geq \frac{\gamma}{2} R_u^2$.
	\end{enumerate}
	We denote the collection of good random walks based at $z$ by $\mathcal{G}^{u}_z$.
	We say that $z \in \mathbb{F}^{R_u}$ is an \emph{$R_u$-good vertex} if the collection of good random walks $\mathcal{G}^{u}_{z}$ is non-empty. In this case we call $B_z(R_u)$ an $R_u$-good box. Otherwise, $z$ is called an \emph{$R_u$-bad vertex} and $B_z(R_u)$ is an $R_u$-bad box.
\end{definition}

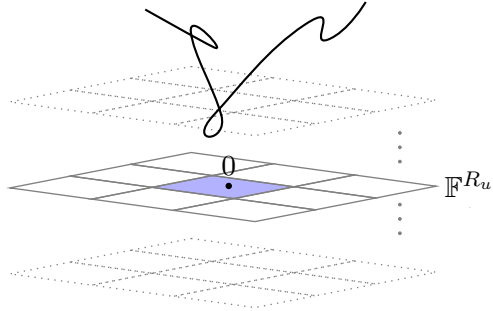
\begin{figure}[h]
	\centering
		\begin{tikzpicture} 
	  	\begin{axis}[view = {40}{10},
			  zmax = 12, 
			  zmin = -12,
			  ymax = 21, 
			  ymin = -21,
			  xmax = 18.5, 
			  xmin = -18.5,
			  xtick = {0},
			  ytick = {0},
			  ztick = {0}, 
			  hide axis, 
			  draw = gray, 
			  fill = gray
			  ]


				\addplot3[mark=*,black,point meta=explicit symbolic,nodes near coords, mark size=1pt] 
					coordinates {(0,1,0)[\small{$0$}]};

				\addplot3[mark=*,point meta=explicit symbolic,nodes near coords, mark size=0pt] 
					coordinates {(18.5,18.5,-1.5)[\small{$\mathbb{F}^{R_u}$}]};
			
				\addplot3 [
			    domain=-5:5,
			    domain y = -5:5,
			    samples = 2,
			    surf,
			    fill = blue!30, faceted color = blue!30] {0};

				\addplot3 [
			    domain=-16:16,
			    domain y = -16:16,
			    samples = 4,
			    mesh,
			    draw = gray,
			    line width = .5pt] {0};

				\addplot3 [
			    domain=-16:16,
			    domain y = -16:16,
			    samples = 4,
			    mesh,
			    draw = black!50!white,
			    line width = 0.5pt,dotted] {6};

				\addplot3 [
			    domain=-16:16,
			    domain y = -16:16,
			    samples = 4,
			    mesh,
			    draw = black!50!white,
			    line width = 0.5pt,dotted] {-6};

				\addplot3 [only marks, mark size=.5pt]table 
				{
					x y     z
					12 15   3.5
					12 15   2.5
					12 15   1.5
					12 15   -3.5
					12 15   -2.5
					12 15   -1.5
				};
			\end{axis} 

			\draw [black,thick] plot [smooth, tension=1.2] 
			coordinates {
				(5.3,5.3) (4.8,4.8) (4.5,5)   
				(3.3,3.8) (3.4,3.8) 
				(3,4.9)  (3.3,4.7)  (2.4,5.2) 
			};

		\end{tikzpicture}

	\caption{ 
	The vertex $0 \in \mathbb{F}^{R_u}$ is $R_u$-good if a trajectory in the interlacements hits $B_z(R_u)$, avoids $ S^{R_u} \setminus B_z(R_u) $ before hitting $B_z(R_u)$, and has a range of typical length and capacity 
	(see conditions~\ref{item: good rw 0}, ~\ref{item: good rw hit},~\ref{item: good rw 1} and~\ref{item: good rw 2} in Definition~\ref{def: good vertices}, respectively). 
	This figure represents a good vertex with a coloured region around it.  
	}
	\label{fig: good boxes}
\end{figure}

Our next step is a lower bound on the probability that a box is good.

\begin{prop} \label{prop: good high prob}
	There exists $R^* > 0$ and $\newet\label{et:2}=\et{et:2}(d)>0$ such that the following holds whenever $R_u > R^*$ and $z \in \mathbb{F}^{R_u}$. Then 
	$ \vert  \mathcal{G}^{u}_{z} \vert \geq_{\ST} \mathrm{Pois}(\et{et:2} \cdot u \cdot {R^{d-2}_u})$.
	In particular, 
	\[
		\prob^u(\text{The box }B_z(2R_u)\text{ is good})\geq 1-	e^{-\et{et:2}u R_u^{d-2}} .
	\]
\end{prop}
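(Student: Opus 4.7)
The strategy is Poisson thinning applied to the local construction of the trajectories intersecting $K := B_z(R_u)$ given by Proposition~\ref{prop: K-intersection}.

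First, by Proposition~\ref{prop: K-intersection}, the measure $\vec{\RI}^u_K$ is equal in distribution to $\sum_{j=1}^{N_K}\delta_{w^j}$, where $N_K\sim \mathrm{Poi}(u\capa(K))$ and the underlying walks $\tilde{X}^j$ are i.i.d.\ simple random walks with initial distribution $\tilde{e}_K$, independent of $N_K$. Since the event that $\tilde{X}^j$ is good (conditions (i)--(iv) of Definition~\ref{def: good vertices}) depends only on the individual trajectory, Poisson thinning yields
\[
|\mathcal{G}^{R_u}_z| \sim \mathrm{Poi}\bigl(u\capa(K)\cdot p_{R_u}\bigr), \qquad p_{R_u} := \prw_{\tilde{e}_K}\bigl(\tilde{X}\in \mathcal{G}^{R_u}_z\bigr).
\]
The problem therefore reduces to producing a positive lower bound on $p_{R_u}$, uniform in $u$ for $R_u$ large enough; the statement then follows by invoking the capacity bound $\capa(K)\geq \cnt{c:capL}R_u^{d-2}$ of Proposition~\ref{prop:capa_ball} and the formula $\prob(\mathrm{Poi}(\lambda)\geq 1)=1-e^{-\lambda}$.

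Second, I would lower bound $p_{R_u}$ by the union bound
\[
p_{R_u} \;\geq\; \prw_{\tilde{e}_K}(\mathrm{(ii)}) - \prw_{\tilde{e}_K}(\neg\mathrm{(iii)}) - \prw_{\tilde{e}_K}(\neg\mathrm{(iv)}),
\]
noting that (i) is automatic from the representation above. The first term is an escape probability: combining Lemma~\ref{lemma: prob F-escape} with a quantitative lower bound on the portion of $\tilde{e}_K$ carried by $\partial_i K\cap\partial_i S^{R_u}$ (obtained from the symmetries of the box and standard uniform estimates on the equilibrium measure of a box), one gets $\prw_{\tilde{e}_K}(\mathrm{(ii)})\geq c\,\et{et:1}$ for some $c>0$. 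The second term is controlled by Proposition~\ref{prop: RW escape} and translation invariance of random walk: the choice of $M$ in Definition~\ref{def: good vertices} (up to enlarging $M$ by a constant to absorb the diameter of $K$) gives $\prw_{\tilde{e}_K}(\neg\mathrm{(iii)})\leq \et{et:1}/2$. The third term vanishes as $R_u\to\infty$ by Theorem~\ref{thm: capacity_lln}, hence is at most $c\et{et:1}/4$ provided $R_u\geq R^*$ with $R^*$ large enough.

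Third, these three estimates combine to $p_{R_u}\geq c\et{et:1}/4=:\eta_0>0$ for all $R_u\geq R^*$. The capacity lower bound then yields $|\mathcal{G}^{R_u}_z|\geq_{\ST}\mathrm{Poi}(\et{et:2}\,u\,R_u^{d-2})$ with $\et{et:2}:=\eta_0\cnt{c:capL}$, and the bound on $\prob(B_z(2R_u)\text{ is good})$ is immediate from $\prob(|\mathcal{G}^{R_u}_z|\geq 1)=1-e^{-\et{et:2}uR_u^{d-2}}$.

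The main obstacle is the uniform lower bound on $\prw_{\tilde{e}_K}(\mathrm{(ii)})$. Lemma~\ref{lemma: prob F-escape} is stated only for walks starting on the ``thin'' faces $\partial_i K\cap\partial_i S^{R_u}$, whereas $\tilde{e}_K$ is spread across the full inner boundary $\partial_i K$, a substantial portion of which lies deep inside $S^{R_u}$. One therefore needs a quantitative bound, uniform in $u$, on the share of the equilibrium mass sitting on the thin faces, together with the observation that from the thick faces the walk has nontrivial probability of making a $\Theta(R_u)$ excursion in the transverse directions before revisiting neighbouring boxes, again via transience in dimension $d-2\geq 3$. Once this geometric issue is settled, the rest of the argument is a routine combination of a union bound with the Poisson thinning identity.
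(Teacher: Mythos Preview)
Your proposal is correct and takes the same route as the paper: Poisson thinning of the local representation from Proposition~\ref{prop: K-intersection}, a uniform lower bound on the single-walk success probability built from Lemma~\ref{lemma: prob F-escape}, Proposition~\ref{prop: RW escape} and Theorem~\ref{thm: capacity_lln}, and the capacity bound of Proposition~\ref{prop:capa_ball}. The paper's proof is simply a terser version of what you wrote.

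On the obstacle you raise: your second paragraph already resolves it, and the additional thick-face analysis you contemplate is unnecessary. By the invariance of simple random walk under permutations of coordinates, the equilibrium measure of $K=B_z(R_u)$ assigns the same total mass to each of its $2d$ faces, so the thin faces $\partial_i K\cap\partial_i S^{R_u}$ (those orthogonal to $e_3,\ldots,e_d$) carry mass exactly $(d-2)/d$. Hence $\prw_{\tilde{e}_K}(\mathrm{(ii)})\geq \tfrac{d-2}{d}\,\et{et:1}$, and since $\tfrac{d-2}{d}-\tfrac12=\tfrac{d-4}{2d}>0$ for $d\geq 5$, your union bound already gives $p_{R_u}\geq \tfrac{d-4}{2d}\,\et{et:1}-o(1)>0$ for $R_u$ large. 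In fact the escape probability from a thick face is of order $R_u^{-1}$ (a one-dimensional coordinate started at the edge of an interval of width $2R_u$ must stay inside for the $\Theta(R_u^2)$ steps needed to exit the slab), so that contribution is negligible anyway. The paper glosses over this starting-distribution point entirely; your treatment is more careful than the original.
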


\begin{proof}
	We consider an event based on conditions~\ref{item: good rw 0},~\ref{item: good rw hit},~\ref{item: good rw 1} and~\ref{item: good rw 2} in Definition~\ref{def: good vertices}. 
	For a doubly-infinite simple random walk $\tilde{X}$ starting at  $x \in  \partial_i B_z  (R_u) \cap \partial_i S^{R_u}$ (with the future being a simple random walk and the past being an independent simple random walk conditioned to avoid $B_z(R_u)$), let 
 	\[
 			A_z (\tilde{X}) = 
 			\left\{  
 			\begin{array}{c}
 			\range (\tilde{ X} [0, \frac{1}{2}R_u^2] )  \subset B_z(MR_u), \\ \capa (\range ( \tilde{X} [0, \frac{1}{2}R_u^2] ) \geq \frac{\gamma}{2} R_u^2, \\ \tilde{X}(-\infty,0)\cap S^{R_u} \setminus  B_z(R_u) =\emptyset
 			\end{array}
 			\right\}.
 	\]
 	By Theorem~\ref{thm: capacity_lln}, Proposition \ref{prop: RW escape} and Corollary \ref{cor:escapecoditioned}, there exists $R^* > 0$ and an $\newet\label{et:3}=\et{et:3}(d,M,\gamma)>0$ such that whenever $R_u > R^*$ 
 	\[
 		\prw_x\otimes\prw_x \left(A_z (\tilde{X})\Big|\tilde{X}(-\infty,0)\cap B_z(R_u)=\emptyset\right) > \et{et:3}.
 	\]
 	Recall that $\vert \mathcal{G}^{u}_{z} \vert $ is  the number of good random walks based at $z$, and that $$ \vert \supp\vec{\RI}^u_{B_z(R_u)}\vert  = N_{B_z(R_u)}  $$ has Poisson distribution with parameter $ u \cdot \capa (B_z(R_u))$.
  The Poisson thinning property implies that $\vert \mathcal{G}^{u}_{z} \vert $ dominates a Poisson distribution with parameter $ \et{et:3} \cdot u \cdot \capa (B_z(R_u))$.
	Proposition~\ref{prop:capa_ball} implies that
	\[
		\et{et:3} \cdot u \cdot \capa (B_z (R_u)) \geq  \et{et:3} \cdot u  \cdot \left( \cnt{c:capL}  \cdot R_u^{d-2} \right) . 
	\]
	Let  $\et{et:2}=\et{et:3}\cdot \cnt{c:capL} > 0$, and note that it only depends on $d$. 
	Therefore,  Proposition~\ref{prop: Poisson ST} implies that $\vert \mathcal{G}^{u}_{z} \vert$ stochastically dominates a Poisson random variable with parameter $\et{et:2} \cdot u \cdot R_u^{d-2} $.
\end{proof}

The basic estimate, that will allow us to construct the path between 0 and $ne_1$, is the following.  
\begin{lemma}\label{lem:blackboxbound}
	Let $R^*>0$ be as in Proposition~\ref{prop: good high prob} and assume that $R_u>R^* $.
	There exists a constant $ \newet \label{et:4} = \et{et:4} (d,M,\gamma) > 0$ satisfying the following. 
	If $z \in \mathbb{F}^{R_u}$ is an {$R_u$-good vertex} with an arbitrarily chosen good random walk $\hat{X}_z$, and $(z,y) \in \mathbb{E}^{R_u}$, then
	\[
		\prob^u \big(\text{There exists a good random walk based at } y \text{ intersecting } \hat{X}_z \big) \geq 1 -  e^{ - \et{et:4} \cdot \alpha^2}.
	\]
\end{lemma}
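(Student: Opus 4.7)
The plan is to condition on the good walk $\hat{X}_z$ and use the local Poissonian description of random interlacements entering $B_y(R_u)$ from Proposition~\ref{prop: K-intersection}: these trajectories (from their first entry to $B_y(R_u)$) are i.i.d.\ walks with initial distribution $\tilde{e}_{B_y(R_u)}$, indexed by $N_y \sim \Poi(u\,\capa(B_y(R_u)))$, which by Proposition~\ref{prop:capa_ball} has mean of order $u R_u^{d-2}$. Set $T \coloneqq \range(\hat{X}_z[0,R_u^2])$, which by items \ref{item: good rw 1} and~\ref{item: good rw 2} of Definition~\ref{def: good vertices} satisfies $T \subset B_z(MR_u)$ and $\capa(T) \geq \gamma R_u^2$.

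For a single such trajectory $\tilde{X}$, I would lower bound the joint probability
\begin{equation*}
q \;\coloneqq\; \prw_{\tilde{e}_{B_y(R_u)}}\bigl(\tilde{X} \text{ is good at } y \text{ and } \range(\tilde X) \cap T \neq \emptyset\bigr)
\end{equation*}
by $c R_u^{4-d}$ for some $c>0$. The hitting part comes from Proposition~\ref{prop: RW and set}: since $|y-z|_\infty = 2R_u$, one has $T \subset B_x(CR_u)$ with $d(x,T)\le CR_u$ for every $x \in B_y(R_u)$ and a constant $C=C(M)$, so
\begin{equation*}
\prw_x\bigl(H_T \leq (CR_u)^2\bigr) \;\geq\; \frac{\et{et:rws}\,\capa(T)}{(CR_u)^{d-2}} \;\geq\; c R_u^{4-d}.
\end{equation*}
Combining with the $\Theta(1)$ probability that the start point lies on the ``free'' portion $\partial_i B_y(R_u) \cap \partial_i S^{R_u}$ and that items~\ref{item: good rw hit}--\ref{item: good rw 2} of Definition~\ref{def: good vertices} hold (exactly as in the proof of Proposition~\ref{prop: good high prob}), and handling the joint event via the strong Markov property at $H_T$ (using Lemma~\ref{lemma: prob F-escape} to control the escape of the post-hitting continuation), one obtains $q \gtrsim R_u^{4-d}$.

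Granted this single-walk estimate, Poisson thinning then implies that the number of good walks at $y$ whose range meets $\range(\hat X_z)$ stochastically dominates
\begin{equation*}
\Poi\bigl(u\,\capa(B_y(R_u))\cdot q\bigr) \;\geq\; \Poi\bigl(c'\,u\,R_u^{2}\bigr) \;\geq\; \Poi\bigl(c'\alpha^{2}\bigr),
\end{equation*}
where the last inequality uses $R_u \geq \alpha/\sqrt{u}$ from~\eqref{eq: R upper bound}. Hence the probability that no such walk exists is at most $e^{-c'\alpha^{2}}$, which is the conclusion with $\et{et:4}=c'$. Independence of these Poisson trajectories from the fixed $\hat{X}_z$ follows from Proposition~\ref{prop: K-intersection} applied to $B_y(R_u)$ (with all of $\hat{X}_z$'s data encoded in the trajectories through $B_z(R_u)$, which form a disjoint set of Poisson points by the standard coupling).

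The main obstacle is making the joint bound $q \gtrsim R_u^{4-d}$ rigorous: condition~\ref{item: good rw hit} for $\tilde X$ forbids entering $S^{R_u}\setminus B_y(R_u)$, which includes $B_z(R_u)$, while $T$ can intersect $B_z(R_u)$. Condition~\ref{item: good rw hit} for $\hat{X}_z$ however forces $T\setminus B_z(R_u) \subset \mathbb{Z}^d\setminus S^{R_u}$, so the real target must be $T' \coloneqq T\setminus B_z(R_u)$, and one must show $\capa(T')\gtrsim R_u^{2}$. This should follow from the fact that by Proposition~\ref{prop: RW escape} a typical excursion of $\hat{X}_z$ out of $B_z(R_u)$ occupies a positive fraction of its first $R_u^{2}$ steps outside the strip, carrying a linear-in-$R_u^2$ share of the total capacity of $T$ via Theorem~\ref{thm: capacity_lln}. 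Once Proposition~\ref{prop: RW and set} is applied to $T'$ in place of $T$, the remaining goodness of $\tilde X$ is verified on a time-segment disjoint from $H_{T'}$ via the strong Markov property, and everything combines to give the desired constant fraction of $R_u^{4-d}$.
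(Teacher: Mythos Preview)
Your overall strategy coincides with the paper's: thin the Poisson process of trajectories entering $B_y(R_u)$ by the event ``good at $y$ and meets $T=\range(\hat X_z[0,R_u^2])$'', bound the single-trajectory probability by $cR_u^{4-d}$ via Proposition~\ref{prop: RW and set} together with $\capa(T)\ge\gamma R_u^2$, and conclude by Poisson thinning that the count dominates $\Poi(c\,uR_u^2)\ge\Poi(c\alpha^2)$. The paper is simply terser: it writes the hitting bound \eqref{eq: intersection bond perc} directly under the conditioned law $\tilde{\prw}_y$ and then multiplies by the Poisson parameter from Proposition~\ref{prop: good high prob}, without further discussion of the joint event.

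You are right to flag the interaction between condition~\ref{item: good rw hit} at $y$ (which forbids entering $B_z(R_u)$) and the location of $T$: a walk that is good at $y$ can only meet $T'=T\setminus B_z(R_u)$, and the paper does not confront this. However, your proposed remedy is only a sketch and is not automatic from Definition~\ref{def: good vertices}: the goodness of $\hat X_z$ guarantees $\capa(T)\ge\gamma R_u^2$, not $\capa(T')\gtrsim R_u^2$, and nothing in conditions~\ref{item: good rw 0}--\ref{item: good rw 2} prevents $\hat X_z[0,R_u^2]$ from spending most of its time (and most of the capacity of its range) inside $B_z(R_u)$. Appealing to Proposition~\ref{prop: RW escape} and Theorem~\ref{thm: capacity_lln} does not immediately yield a capacity lower bound on a subset of the range. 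A clean way to close this gap is to strengthen Definition~\ref{def: good vertices} by an extra clause of the form $\capa\big(\range(\tilde X[0,R_u^2])\setminus B_z(R_u)\big)\ge \gamma' R_u^2$, which holds with uniformly positive probability by the same argument as in Proposition~\ref{prop: good high prob} (run the walk out of $B_z(R_u)$ in $o(R_u^2)$ steps, then apply Theorem~\ref{thm: capacity_lln} to the remaining $\Theta(R_u^2)$ steps and Lemma~\ref{lemma: prob F-escape} for the escape); Proposition~\ref{prop: RW and set} then applies to $T'$ and the rest of your argument goes through.
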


\begin{proof}
	By definition, the collections of good random walks $\mathcal{G}^{u}_z$ and $\mathcal{G}^{u}_y$ are independent when $y \neq z$. 
 	Therefore, we get that the number of good random walks based at $z$, namely $\vert \mathcal{G}_z^{u} \vert$, and the number of good random walks based at $y$, namely $\vert \mathcal{G}_y^{u} \vert$, are independent random variables.

 	Let $\tilde{\prw}_{y}$ be the law of a good random walk $\tilde{X}_y$ based at $y$. Note that for any good random walk $\tilde{X}_y$, the law of $\tilde{X}_y[\frac{1}{2}R_u^2,\infty)$ is that of a simple random walk $\prw_{\tilde{X}(\frac{1}{2}R_u^2)}$. Furthermore since $\tilde{X}$ is good random walk based at $y$, we know that $\tilde{X}(\frac{1}{2}R_u^2)\in B_y(MR_u)$.
 	Proposition~\ref{prop: RW and set} and Condition~\ref{item: good rw 2} in Definition~\ref{def: good vertices} implies that for all the good random walks based at $y$ in $\mathcal{G}_y^{u }$: 
 	\begin{align} \label{eq: intersection bond perc} 
 		\tilde{\prw}_{y}\left(  \range \left( \hat{X}_z \left[0, \frac{1}{2}R_u^2\right] \right)\cap \range \left( \tilde{X}_y \left[\frac{1}{2}R_u^2, R_u^2\right] \right)\neq \emptyset  \right) 
 		&\geq \frac{\et{et:rws} \capa (\range ( \hat{X}_z [0, \frac{1}{2}R_u^2] ))}{R_u^{d-2}} \\
 		 &\geq \frac{\et{et:rws} \gamma}{2R_u^{d-4}}.
 	\end{align}
 	By the Poisson thinning property, Proposition~\ref{prop: good high prob} and Proposition~\ref{prop: Poisson ST}, the number of random walks in  $\mathcal{G}_y^{u}$ intersecting $\hat{X}_z [0, R_u^2]$ within the first $R_u^2$ steps, stochastically dominates a Poisson random variable with parameter $ \et{et:2} \cdot u \cdot \et{et:rws} \cdot \frac{\gamma}{2} \cdot R_u^2  = \et{et:4} \cdot \alpha^2 $, where $\et{et:4}=\et{et:2} \cdot \et{et:rws} \cdot \frac{\gamma}{2}$.
 	In particular, the probability that there exists a random walk in  $\mathcal{G}_y^{u}$ intersecting $\hat{X}_z [0, R_u^2]$ within the first $R_u^2$ steps is at least  $1 -  \exp \left( - \et{et:4} \cdot \alpha^2 \right) $.
\end{proof}

Next, we define a path in the renormalized $*$-connected square grid $(\mathbb{F}^{R_u}, \mathbb{E}^{R_u})$ using an exploration process. For a fixed $n\in\mathbb{N}$, we wish to construct a path between $0$ and $z=(z_1, 0,0,\ldots, 0)\in\mathbb{F}^{R_u}$ satisfying $z_1\geq 2nR_u$. In each step of the exploration process we attempt to connect the current box with a new box by verifying that a good random walk in the new box is connected to a previously specified good random walk in the current box. Each successful exploration step on the renormalised lattice will be denoted by an arrow 
\begin{tikzpicture}
	\draw [-latex,thick,white](0,0) -- (0.75,0);\draw [-latex,thick,blue](0,0.1) -- (0.75,0.1);
\end{tikzpicture}
between the centres of the boxes. These arrows define a path on the renormalized graph $\mathbb{F}^{R_u}$, to which we always refer as an  \emph{arrow path}. A direction in which the exploration failed to create a connection are marked with a {\large $\textcolor{red}{\times}$}. In this case, we also colour the box we failed to connect to by painting it in black 
\begin{tikzpicture} 
	\draw [fill=black!20] (0,0)  rectangle (.4,.4); 
\end{tikzpicture}. 
The following algorithm explores the boundary of the \emph{black box region} (see Figures~\ref{fig:pathalgo} and~\ref{fig:restart} for examples of realizations of the process).  

Denote by $\mathcal{E}=\{\pm e_1,~\pm e_2,~\pm (e_1\pm e_2)\}$, the set of $*$-directions in the graph $(\mathbb{F}^{R_u},\mathbb{E}^{R_u})$ and by $\ell_0=\{(x, 0,0,\ldots, 0):x\in2R_u\mathbb{Z}\} \subset \mathbb{F}^{R_u}$. Besides black boxes, along the algorithm we declare boxes along the exploration process live or dead. \emph{Live boxes} are eventually used to construct the final arrow path whereas \emph{dead boxes} are explored but are not used eventually.

\begin{figure}[H]
\begin{tikzpicture}[scale=1.5]
\usetikzlibrary{patterns}
	
	\draw [fill=black, opacity=.2] (0,-0.5) rectangle (8.5,0);

	\draw[step=.5cm,black,thin] (0,-0.5) grid (8.5,3);

	\node at (0.25,.25)[circle,fill=black,inner sep=1pt]{};
	\node at (0.25,.10) (c) {\small $0$};
	\draw [-latex,thick,blue](0.25,0.25) -- (0.75,0.25);
	\draw [-latex,thick,blue](0.75,0.25) -- (1.25,0.25);
	\draw [-latex,thick,blue, dashed](1.25,0.25) -- (1.75,0.25); 
	
	\draw [fill=black, opacity=.2] (2,0) rectangle (2.5,0.5); 
	\draw [fill=black, opacity=.2] (2,0.5) rectangle (2.5,1); 
	\draw [fill=black, opacity=.2] (1.5,0.5) rectangle (2,1); 
	\draw [fill=black, opacity=.2] (1,0.5) rectangle (1.5,1); 
	
	\draw [-latex,thick,blue](1.25,0.25) -- (0.75,0.75);
	\draw [-latex,thick,blue](0.75,0.75) -- (1.25,1.25);

	\draw [fill=black, opacity=.2] (1.5,1) rectangle (2,1.5);	
	\draw [fill=black, opacity=.2] (1.5,1.5) rectangle (2,2);

	\draw [-latex,thick,blue](1.25,1.25) -- (1.25,1.75);
	\draw [-latex,thick,blue](1.25,1.75) -- (1.75,2.25);

	\draw [fill=black, opacity=.2] (2,1.5) rectangle (2.5,2);

	\draw [-latex,thick,blue](1.75,2.25) -- (2.25,2.25);
	\draw [-latex,thick,blue](2.25,2.25) -- (2.75,1.75);
	\draw [-latex,thick,blue,dashed](2.75,1.75) -- (2.25,1.25); 

	\draw [fill=black, opacity=.2] (2.5,0.5) rectangle (3,1);
	\draw [fill=black, opacity=.2] (2.5,1) rectangle (3,1.5);

	\draw [-latex,thick,blue](2.75,1.75) -- (3.25,1.25); 
	
	\draw [fill=black, opacity=.2] (3,1) rectangle (3.5,0.5);

	\draw [-latex,thick,blue](3.25,1.25) -- (3.75,0.75); 
	
	\draw [fill=black, opacity=.2] (3,0.5) rectangle (3.5,0);

	\node[red] at (2,0.25)  (c)     {\large $\times$};
	\node[red] at (2,0.5)  (c)     {\large $\times$};
	\node[red] at (1.75,0.5)  (c)     {\large $\times$};
	\node[red] at (1.5,0.5)  (c)     {\large $\times$};
	\node[red] at (1.5,1.25)  (c)     {\large $\times$};
	\node[red] at (1.5,1.5)  (c)     {\large $\times$};
	\node[red] at (2,2)  (c)     {\large $\times$};
	\node[red] at (2.5,1)  (c)     {\large $\times$};
	\node[red] at (2.5,1.25)  (c)     {\large $\times$};
	\node[red] at (3.25,1)  (c)     {\large $\times$};
	\node[red] at (3.25,0.5)  (c)     {\large $\times$};

	\draw [-latex,thick,blue,dashed](3.75,0.75) -- (3.75,0.25); 


	\draw [-latex,thick,blue,dashed](3.75,0.25) -- (4.25,0.25); 
	\draw [-latex,thick,blue,dashed](4.25,0.25) -- (4.75,0.25); 
	
	\draw [fill=black, opacity=.2] (5,0) rectangle (5.5,0.5);
	\draw [fill=black, opacity=.2] (5,0.5) rectangle (5.5,1);
	\draw [fill=black, opacity=.2] (4.5,0.5) rectangle (5,1);
	\draw [fill=black, opacity=.2] (4,0.5) rectangle (4.5,1);
	
	\node[red] at (5,0.25)  (c)     {\large $\times$};
	\node[red] at (5,0.5)  (c)     {\large $\times$};
	\node[red] at (4.75,0.5)  (c)     {\large $\times$};
	\node[red] at (4.5,0.5)  (c)     {\large $\times$};

	\draw [-latex,thick,blue](3.75,0.75) -- (4.25,1.25);
	\draw [-latex,thick,blue](4.25,1.25) -- (4.75,1.25);
	\draw [-latex,thick,blue](4.75,1.25) -- (5.25,1.25);
	\draw [-latex,thick,blue](5.25,1.25) -- (5.75,0.75);
	\draw [-latex,thick,blue](5.75,0.75) -- (5.75,0.25);
	\draw [-latex,thick,blue](5.75,0.25) -- (6.25,0.25);
	\draw [-latex,thick,blue](6.25,0.25) -- (6.75,0.25);

	\draw [fill=black, opacity=.2] (7,0) rectangle (7.5,0.5);
	\node[red] at (7,0.25)  (c)     {\large $\times$};

	\draw [-latex,thick,blue](6.75,0.25) -- (7.25,0.75);
	\draw [-latex,thick,blue](7.25,0.75) -- (7.75,0.25);
	\draw [-latex,thick,blue](7.75,0.25) -- (8.25,0.25);
	\node at (8.25,0.25)[circle,fill=black,inner sep=1pt]{};
	\node at (8.25,.10) (c) {\small $z$};
\end{tikzpicture}
\caption{Path finding algorithm. The lattice $\mathbb{F}^{R_u}$ is the lattice of centre of boxes in the figure. The path of solid arrows represent the final output of the algorithm. Boxes at the head of dashed arrows are dead.}
\label{fig:pathalgo}
\end{figure}
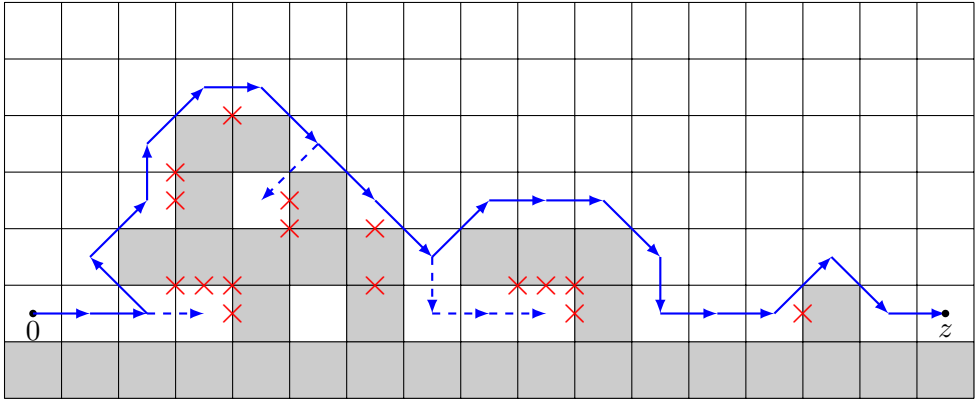

\begin{algorithm}\label{alg:exploration}
	\begin{itemize}$~$
	\item \emph{Initialization:} 
	\begin{enumerate}[label=(\arabic*)]
	\item \label{step:1}  Paint in black all boxes ${\bf{x}}=(x_1, x_2,0,\ldots, 0)\in\mathbb{F}^{R_u}$ satisfying $x_2=-2R_u$.
	\item Denote by ${\bf{w_0}}=(w_0,0,0,\ldots,0)\in\ell_0$ the box with the largest $w_0\leq 0$ such that ${\bf{w_0}}$ is a good box.
	\item Choose an arbitrary good (random walk) path $X_{{\bf{w}_0}}$ in the box ${\bf{w_0}}$, declare ${\bf{w}_0}$ to be the current box as well as a live box, and colour all boxes in $\ell_0$ between $w_0$ (not including) and $0$ (including) black. 
	\end{enumerate}
	\item \emph{Iteration:}
	\begin{enumerate}[label=(\arabic*)]
		\setcounter{enumi}{2}
		\item If the current box is of the form $(z,0,0,\ldots,0)\in\ell_0$ for some $z\geq 2 R_u n$, jump to Output.
		\item Observe the current box ${\bf{v}}$. If ${\bf{v}}\in\ell_0$ declare the direction $e$ to be the first direction in the counter clockwise direction, starting from $e_1$, such that ${\bf{v}}+e$ is not a black box or a dead box. If ${\bf{v}}\notin \ell_0$, observe the last drawn arrow $e'$ and denote by $e\in\mathcal{E}$ the first direction in the counter clockwise direction, starting from $-e'$, such that ${\bf{v}}+e$ is not a black box or a dead box (this choice allows us to follow the boundary of the currently existing black-box cluster). If no such direction exist, define the box ${\bf{v}}$ to be dead and jump to Restart.
		\item If ${\bf{v}}+e$ is a live box, declare the box at ${\bf{v}}$ to be dead and redefine the current box to be ${\bf{v}}+e$ (this corresponds to backtracking along the arrow path).
		\item If ${\bf{v}}+e$ is not a live box (which means it was not explored until this point), check whether the box ${\bf{v}}+e$ contains a good random walk that intersects the chosen good random walk in the box ${\bf{v}}$. 
		\begin{enumerate}
			\item If such a good random walk exists, choose one of them (arbitrarily), draw an arrow from the centre of the box at ${\bf{v}}$ to the centre of the box at ${\bf{v}}+e$, declare the box at ${\bf{v}}+e$ to be a live box and redefine the current box to be ${\bf{v}}+e$. 
			\item If such a good random walk does not exist, denote {\large $\textcolor{red}{\times}$} on the edge shared by the boxes at ${\bf{v}}$ and ${\bf{v}}+e$ and define the box ${\bf{v}}+e$ to be black.
		\end{enumerate}	
					\item Return to (3).
	\end{enumerate}
	\item \emph{Restart:}
		\begin{enumerate}[label=(\arabic*)]
		\setcounter{enumi}{5}
		\item Define ${\bf{w}_0}=(w_0,0,0,\ldots,0)\in\ell_0$ with $w_0\in \mathbb{Z}$ the largest value such that all dead or live boxes along the line $\ell_0$ are strictly to the right of the box ${\bf{w}_0}$ and ${\bf{w}_0}$ is a good box. Define all boxes to the right of ${\bf{w}_0}$ and to the left of the left most explored box (dead or alive) to be black. Choose an arbitrary good random walk in the box ${\bf{w}_0}$ and declare the box ${\bf{w}_0}$ to be the current box as well as a live box. Return to (3). 
		\end{enumerate}
	\item \emph{Output:}
		Denote the (last) current box by ${\bf{w}_1}$. 
		Return the arrow path from ${\bf{w}_0}$ to ${\bf{w}_1}$  consisting of all live boxes and the arrows connecting them. 
	\end{itemize}
\end{algorithm}

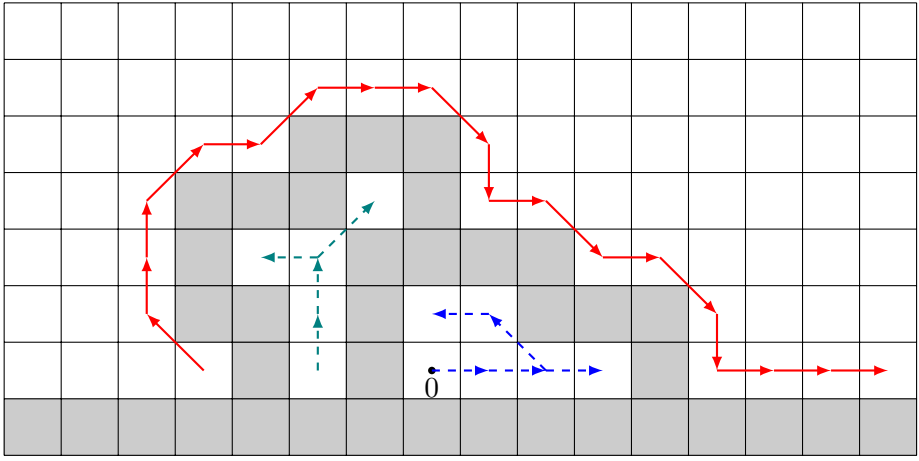
\begin{figure}[H] 
\begin{tikzpicture}[scale=1.5]
\usetikzlibrary{patterns}
\usetikzlibrary{math} 

	\tikzmath{ \t = 3.5;} 
	
	\draw [fill=black, opacity=.2] (0,-0.5) rectangle (8,0);

	\draw[step=.5cm,black,thin] (0,-0.5) grid (8,3.5);

	\node at (0.25 + \t ,.25)[circle,fill=black,inner sep=1pt]{};
	\node at (0.25+ \t ,.10) (c) {\small $0$};
	

	\draw [-latex,thick,blue,dashed]( 0.25 + \t , 0.25) -- ( 0.75 + \t , 0.25);
	\draw [-latex,thick,blue,dashed]( 0.75 + \t , 0.25) -- ( 1.25 + \t , 0.25);
	\draw [-latex,thick,blue,dashed]( 1.25 + \t , 0.25) -- ( 1.75 + \t , 0.25);
	\draw [-latex,thick,blue,dashed]( 1.25 + \t , 0.25) -- ( 0.75 + \t , 0.75);
	\draw [-latex,thick,blue,dashed]( 0.75 + \t , 0.75) -- ( 0.25 + \t , 0.75);
	
	\draw [fill=black, opacity=.2] ( 2.0 + \t , 0.0) rectangle ( 2.5 + \t , 0.5); 
	\draw [fill=black, opacity=.2] ( 2.0 + \t , 0.5) rectangle ( 2.5 + \t , 1.0); 
	\draw [fill=black, opacity=.2] ( 1.5 + \t , 0.5) rectangle ( 2.0 + \t , 1.0); 
	\draw [fill=black, opacity=.2] ( 1.0 + \t , 0.5) rectangle ( 1.5 + \t , 1.0); 
	
	\draw [fill=black, opacity=.2] ( 1.0 + \t , 1.0) rectangle ( 1.5 + \t , 1.5);
	\draw [fill=black, opacity=.2] ( 0.5 + \t , 1.0) rectangle ( 1.0 + \t , 1.5);
	\draw [fill=black, opacity=.2] ( 0.0 + \t , 1.0) rectangle ( 0.5 + \t , 1.5); 
	\draw [fill=black, opacity=.2] (-0.5 + \t , 1.0) rectangle ( 0.0 + \t , 1.5); 
	\draw [fill=black, opacity=.2] (-0.5 + \t , 0.5) rectangle ( 0.0 + \t , 1.0); 
	\draw [fill=black, opacity=.2] (-0.5 + \t , 0.0) rectangle ( 0.0 + \t , 0.5); 


	\draw [fill=black, opacity=.2] ( 0.0 + \t , 1.5) rectangle ( 0.5 + \t , 2.0);
	\draw [fill=black, opacity=.2] ( 0.0 + \t , 2.0) rectangle ( 0.5 + \t , 2.5);
	\draw [fill=black, opacity=.2] (-0.5 + \t , 2.0) rectangle ( 0.0 + \t , 2.5); 
	\draw [fill=black, opacity=.2] (-1.0 + \t , 2.0) rectangle (-0.5 + \t , 2.5); 

	\draw [-latex,thick,teal,dashed](-0.75 + \t , 0.25) -- (-0.75 + \t , 0.75);
	\draw [-latex,thick,teal,dashed](-0.75 + \t , 0.75) -- (-0.75 + \t , 1.25);
	\draw [-latex,thick,teal,dashed](-0.75 + \t , 1.25) -- (-0.25 + \t , 1.75);
	\draw [-latex,thick,teal,dashed](-0.75 + \t , 1.25) -- (-1.25 + \t , 1.25);


	\draw [fill=black, opacity=.2] (-1.0 + \t , 1.5) rectangle (-0.5 + \t , 2.0); 
	\draw [fill=black, opacity=.2] (-1.5 + \t , 1.5) rectangle (-1.0 + \t , 2.0); 
	\draw [fill=black, opacity=.2] (-2.0 + \t , 1.5) rectangle (-1.5 + \t , 2.0); 
	\draw [fill=black, opacity=.2] (-2.0 + \t , 1.0) rectangle (-1.5 + \t , 1.5); 
	\draw [fill=black, opacity=.2] (-2.0 + \t , 0.5) rectangle (-1.5 + \t , 1.0);
	\draw [fill=black, opacity=.2] (-1.5 + \t , 0.5) rectangle (-1.0 + \t , 1.0); 
	\draw [fill=black, opacity=.2] (-1.5 + \t , 0.0) rectangle (-1.0 + \t , 0.5); 

	\draw [-latex,thick,red](-1.75 + \t , 0.25) -- (-2.25 + \t , 0.75);
	\draw [-latex,thick,red](-2.25 + \t , 0.75) -- (-2.25 + \t , 1.25);
	\draw [-latex,thick,red](-2.25 + \t , 1.25) -- (-2.25 + \t , 1.75);
	\draw [-latex,thick,red](-2.25 + \t , 1.75) -- (-1.75 + \t , 2.25);
	\draw [-latex,thick,red](-1.75 + \t , 2.25) -- (-1.25 + \t , 2.25);
	\draw [-latex,thick,red](-1.25 + \t , 2.25) -- (-0.75 + \t , 2.75);
	\draw [-latex,thick,red](-0.75 + \t , 2.75) -- (-0.25 + \t , 2.75);
	\draw [-latex,thick,red](-0.25 + \t , 2.75) -- ( 0.25 + \t , 2.75);
	\draw [-latex,thick,red]( 0.25 + \t , 2.75) -- ( 0.75 + \t , 2.25);
	\draw [-latex,thick,red]( 0.75 + \t , 2.25) -- ( 0.75 + \t , 1.75);
	\draw [-latex,thick,red]( 0.75 + \t , 1.75) -- ( 1.25 + \t , 1.75);
	\draw [-latex,thick,red]( 1.25 + \t , 1.75) -- ( 1.75 + \t , 1.25);
	\draw [-latex,thick,red]( 1.75 + \t , 1.25) -- ( 2.25 + \t , 1.25);
	\draw [-latex,thick,red]( 2.25 + \t , 1.25) -- ( 2.75 + \t , 0.75);
	\draw [-latex,thick,red]( 2.75 + \t , 0.75) -- ( 2.75 + \t , 0.25);

	\draw [-latex,thick,red]( 2.75 + \t , 0.25) -- ( 3.25 + \t , 0.25);
	\draw [-latex,thick,red]( 3.25 + \t , 0.25) -- ( 3.75 + \t , 0.25);
	\draw [-latex,thick,red]( 3.75 + \t , 0.25) -- ( 4.25 + \t , 0.25);

\end{tikzpicture}
\caption{Path finding algorithm. The lattice $\mathbb{F}^{R_u}$ is the lattice of centre of boxes in the figure. Different colours represent different iterations of the algorithm.}
\label{fig:restart}
\end{figure}

Next we wish to bound the cardinality of the black region. 
We start by introducing some notation. 
Denote by $\mathcal{BR}^n=\mathcal{BR}^n(\alpha,u)$ the set of black boxes discovered in Algorithm~\ref{alg:exploration} which are above or on $\ell_0$ (for input $n$). 

\begin{lemma}\label{lem:blackregionbound}
	There is an $\alpha>0$ such that for any $n\in\mathbb{N}$ and every $u\in (0,1)$ 
	$$\prob^u\left(|\mathcal{BR}^n(\alpha,u)|<8n\right)>1/2.$$
\end{lemma}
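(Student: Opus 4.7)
The plan is to bound $\mathbb{E}[|\mathcal{BR}^n(\alpha,u)|]$ linearly in $n$ with sufficiently small coefficients (uniformly in $u\in (0,1)$) and then conclude via Markov's inequality. Set
\[
p_{\mathrm{fail}}(\alpha)\coloneqq e^{-\et{et:4}\alpha^2},\qquad p_{\mathrm{bad}}(\alpha)\coloneqq e^{-\et{et:2}\alpha^{d-2}};
\]
Lemma \ref{lem:blackboxbound} bounds each exploration-attempt failure probability by $p_{\mathrm{fail}}$, while Proposition \ref{prop: good high prob} combined with the inequality $uR_u^{d-2}\geq \alpha^{d-2}$ (valid for $u\in(0,1)$ and $d\geq 5$) bounds by $p_{\mathrm{bad}}$ the probability that a given box is bad. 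Both vanish as $\alpha\to\infty$.

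Decompose $|\mathcal{BR}^n|=I+F+P_{\mathrm{rst}}$, where $I$ counts the boxes painted black at Initialization, $F$ counts the boxes marked by a red cross after a failed attempt in step 6(b), and $P_{\mathrm{rst}}$ counts the boxes painted black during Restart events. By the selection rule for $\mathbf{w}_0$, both $I$ and the per-Restart contribution to $P_{\mathrm{rst}}$ consist of bad $\ell_0$-boxes filling the gap between a previously explored $\ell_0$-box and the next good one to its left; since the events ``box is bad'' are independent across distinct boxes on $\ell_0$, each such gap has expected size at most $p_{\mathrm{bad}}/(1-p_{\mathrm{bad}})$, yielding $\mathbb{E}[I]\leq p_{\mathrm{bad}}/(1-p_{\mathrm{bad}})$ and $\mathbb{E}[P_{\mathrm{rst}}]\leq (p_{\mathrm{bad}}/(1-p_{\mathrm{bad}}))\mathbb{E}[R]$, with $R$ the total number of Restarts. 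For $\mathbb{E}[F]$, let $T$ be the total number of exploration attempts and $L$ the number of live-at-some-point boxes; each live box is tried in at most $|\mathcal{E}|=8$ directions and each direction generates at most one attempt, so $T\leq 8L$. Applying Lemma \ref{lem:blackboxbound} conditionally to each attempt (the relevant source-box good path and target box are fixed by the revealed history, and the decoupling of $\vec{\RI}^{\,u}_{B_z(R_u)}$ on disjoint target boxes via Proposition \ref{prop: K-intersection} allows the Lemma to apply) and summing via linearity of expectation yields $\mathbb{E}[F]\leq p_{\mathrm{fail}}\mathbb{E}[T]\leq 8p_{\mathrm{fail}}\mathbb{E}[L]$.

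The principal remaining step—and the main anticipated obstacle—is the pair of geometric inequalities $L\leq n+1+C_0|\mathcal{BR}^n|$ and $R\leq |\mathcal{BR}^n|$, for some universal constant $C_0$. Both follow from the planar structure of Algorithm \ref{alg:exploration}, which is essentially a right-hand-rule depth-first search on the $*$-connected grid $\mathbb{F}^{R_u}$: the live-at-some-point boxes cluster along the final arrow path together with the boundaries of the encountered black clusters, and since the perimeter of a subset of $\mathbb{Z}^2$ is at most a constant multiple of its area and the final arrow path advances a net distance $n$ along $\ell_0$, this yields the first inequality; meanwhile each Restart event happens exactly when the exploration has completely enclosed a connected component of the currently explored region by black boxes, so each Restart can be charged to at least one distinct black box in its enclosing boundary. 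Verifying these two planar estimates rigorously via a case analysis of the directions tried in steps (4)--(6) will be the main work.

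Combining everything,
\[
\mathbb{E}[|\mathcal{BR}^n|]\leq \frac{p_{\mathrm{bad}}}{1-p_{\mathrm{bad}}}+8p_{\mathrm{fail}}(n+1)+\Big(8C_0 p_{\mathrm{fail}}+\frac{p_{\mathrm{bad}}}{1-p_{\mathrm{bad}}}\Big)\mathbb{E}[|\mathcal{BR}^n|].
\]
Choosing $\alpha$ large enough that the coefficient of $\mathbb{E}[|\mathcal{BR}^n|]$ on the right is at most $1/2$ and that additionally $32p_{\mathrm{fail}}+2p_{\mathrm{bad}}/(1-p_{\mathrm{bad}})<4$ gives $\mathbb{E}[|\mathcal{BR}^n|]<4n$ for every $n\geq 1$, and Markov's inequality then yields $\prob(|\mathcal{BR}^n|\geq 8n)<1/2$.
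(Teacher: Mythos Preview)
Your approach is plausible but takes a genuinely different route from the paper. The paper argues via a Peierls-type first-moment bound on the $*$-connected components of the black region: writing $\mathcal{BR}^n=\bigcup_{i=1}^l \mathcal{BR}_i$ as a disjoint union of $*$-clusters, it observes that each cluster (except possibly the last) must contain a box on $\ell_0$ with first coordinate in $[0,2nR_u]$, so $l\leq n+1$ deterministically; and it bounds $\mathbb{E}[|\mathcal{BR}_i|]\leq 2$ by summing over lattice animals, using that there are at most $49^k$ animals of size $k$ and that each box in the animal is black with probability at most $e^{-\eta_5\alpha^2}+e^{-\eta_3\alpha^{d-2}}$ (from Lemma~\ref{lem:blackboxbound} and Proposition~\ref{prop: good high prob}). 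Markov's inequality then gives the result in one line: $\prob(|\mathcal{BR}^n|>8n)\leq \mathbb{E}[l]\cdot 2/(8n)\leq 1/2$.

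Your bookkeeping decomposition $I+F+P_{\mathrm{rst}}$ is a reasonable alternative, but it trades the short lattice-animal estimate for the two planar inequalities $L\leq n+1+C_0|\mathcal{BR}^n|$ and $R\leq |\mathcal{BR}^n|$ that you correctly flag as the main remaining work. These do hold (the ever-live boxes lie on the $*$-outer boundary of the accumulated black region together with the terminal arrow path), but establishing them rigorously from the algorithm's counterclockwise rule is fiddlier than the paper's combinatorial count, and the paper's bound $l\leq n+1$ already does the job of your $L$-inequality without any perimeter--area reasoning. One further small gap: your self-referential inequality for $\mathbb{E}[|\mathcal{BR}^n|]$ only yields a bound if you know a priori that this expectation is finite; you should either argue this directly or run the inequality for a truncated version and pass to the limit.
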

\begin{proof}
The argument follows by a Peierls argument and a first moment estimate.

Let $l$ be the random number of black box $*$-clusters in $\mathcal{BR}^n$ and enumerate them as $\mathcal{BR}_1,\ldots\mathcal{BR}_l$ according to the order of their appearance along the exploration process.

First we claim  that we can choose $\alpha > 0$ large enough, such that for all $1\le i\le l$, 
\begin{equation}\label{eq:expecbound}
\mathbb{E}[|\mathcal{BR}_i|]\le 2.
\end{equation} 
Since a black-box cluster is a lattice animal in $\mathbb{F}^{2R_u}$, it follows from \cite[(4.24)]{grimmett1999percolation}, that the number of possible animals of size $k$ is smaller than $49^k$.
A union bound and Proposition \ref{prop: good high prob} and Lemma~\ref{lem:blackboxbound} gives
\[ 
	\prob^u(|\mathcal{BR}_i|>k)\le  49^{k}\cdot (e^{-\et{et:4} \cdot k \cdot \alpha^2}+e^{-\eta_3uR_u^{d-2}k})\leq (49[e^{-\eta_5\alpha^2}+e^{-\eta_3 \alpha^{d-2}}])^k
.\]
For an $\alpha>0$ large enough and $u\in (0,1)$ we can thus guarantee that $\mathbb{E}[|\mathcal{BR}_i|]\le 2$. Since each connected component (perhaps except for the last one) must contain a box in $\ell_0$ whose first coordinate is between $0$ and $2nR_u$, it follows that $l\le n+1$.
Now by Markov's inequality and \eqref{eq:expecbound}
\begin{equation}
\begin{aligned}
\prob^u\left(|\mathcal{BR}^n|>8n\right)&\le \frac{\mathbb{E}[|\mathcal{BR}^n|]}{8n}\le\frac{\mathbb{E}\Big[\mathbb{E}\Big[|\mathcal{BR}^n|\Big|l\Big]\Big]}{8n}\\
&=\frac{\mathbb{E}\Big[\sum_{i=1}^l\mathbb{E}\Big[|\mathcal{BR}_i|\Big]\Big]}{8n}=\frac{2 \mathbb{E}[l]}{8n}\le \frac{1}{2}
\end{aligned}
.\end{equation}
\end{proof}

\subsection{Proof of Proposition~\ref{prop: upper bound}} \label{subsec: proof of upper bound - case 1}


We apply Algorithm \ref{alg:exploration} with input $n \geq 1$. The boxes along the path of arrows between ${\bf{w}_0}$ and ${\bf{w}_1}$  in the output are on the outer $*$-boundary of (see Figure \ref{fig:pathalgo})
\[ 
	\mathcal{BR}^n\cup\{(x_1, x_2,0,\ldots, 0)\in\mathbb{F}^{R_u}, 0\le x_1\le 2nR_u, x_2=-2R_u\}.
\] 
Since every black box has at most $8$ $*$-neighbouring boxes, it follows from Lemma \ref{lem:blackregionbound} that the output path consists of at most $(64+1)n$ many $R_u$-good boxes with probability greater than $1/2$. Each such arrow represents a connection of length $R_u^2$ (by Condition~\ref{item: good rw 1} in Definition~\ref{def: good vertices}), thus we obtain that
\begin{equation}
\prob^u 
\left( 
\begin{array}{c} 
	\text{There exists } x \in B_{{\bf{w}_0}} (2R_u), \, y \in B_{{\bf{w}_1}} (2R_u)   \\
	\text{such that } d_u (x,y) \leq 65 n R_u^2
\end{array} 
\right) \geq \frac 1 2 .
\end{equation}
Next we bound the distance between $[0]_u$, $[ne_1]_u$ and $x$, $y$ satisfying the event above, respectively. 
To this end, we first bound the distance between  ${\bf{w}_0}$ and $0$, and between ${\bf{w}_1}$ and $n\cdot  (2R_ue_1)$.
Now if $x\neq B_{\bf{w}_0}(R_u)$, then by \eqref{eq:expecbound} and the fact that $x$ is on the boundary of a black-box component connected, Markov's inequality implies that
\begin{equation}
	\prob^u(|{\bf{w}_0}| \leq 16 \cdot 2R_u) \geq 1 - \prob(\vert \mathcal{BR}_1 \vert>32 R_u) >1- \frac{1}{8}.
\end{equation}
Similarly, for the endpoint of the output path we have
\begin{equation}
	\prob^u(|{\bf{w}_1} - n \cdot (2R_u e_1) | \leq 16 \cdot 2R_u) \geq 1 - \prob(\vert \mathcal{BR}_l \vert>32 R_u) >1- \frac{1}{8}.
\end{equation}
Additionally, for any $K > 0$
\[
	\prob^u \left( 
		\Big\vert [0]_u \Big\vert > K, \text{ or } 
		\Big\vert [n\cdot(2R_ue_1)]_u -   n\cdot(2R_ue_1) \Big\vert  > K 
		\right)
	\leq 2 e^{-u K^{d-2}}.
\]
Fix $K$ such that bound above is smaller than $1/100$.
Then, assuming the last three events hold 
\[
	\begin{split}
	[ 0 ]_u \cup B_{\bf{w}_0}(R_u) &\subset B_0 \left( \max \{ 34 R_u, K \} \right), \\
	[2R_u e_1]_u \cup B_{\bf{w}_1}(R_u) &\subset B_{2R_u e_1} \left( \max \{ 34 R_u, K \} \right).
	\end{split}
\]
Then, by the bound on the chemical distance in \cite[Theorem 1.3]{vcerny2012internal} there exists $C = C(u,d) > 0$ such that we have that
\[
	\prob^u \left( 
	\begin{array}{c}
		\text{There exists } x_1, x_2 \in B_0 \left( \max \{ 34 R_u, K \} \right) \cap \mathcal{I}^u \\
		d_u \left( x_1, x_2 \right) > C \max \{34 R_u, K  \}
	\end{array}
	 \right) <  \frac{1}{16}, 
\]
and
\[
	\prob^u \left( 
	\begin{array}{c}
		\text{There exists } y_1, y_2 \in B_{n(2R_ue_1)} \left( \max \{ 34 R_u, K \} \right) \cap \mathcal{I}^u \\
		d_u \left( y_1, y_2 \right) > C \max \{34 R_u, K  \}
	\end{array} 
	 \right)
	 < \frac{1}{16}.
\]
Then, all the bounds above imply that the intersection of these events has positive probability:
\[
	\prob^u \left( 
	\begin{array}{c}
	\text{There exists } x \in B_{\bf{w}_0} (2R_u)  \cap \mathcal{I}^u, \, y \in  B_{\bf{w}_1} (2R_u)  \cap \mathcal{I}^u \text { such that }  \\[3pt]
	 d_u \left( [0]_u, x \right) \leq C \max \{34 R_u, K  \}, 
	 \\[3pt]
	 d_u \left( y, [n(2R_ue_1)] \right) \leq C \max \{34 R_u, K \},   \\[3pt] 
	\text{ and } d_u \left( x , y \right) \leq 65 n R_u^2
	\end{array}
	\right) > c > 0. 
\]
Finally,  since
 \[ \rho_u(e_1) = \lim_{n \to \infty} \frac{1}{2nR_u} d_u \left( [0]_u, [n(2R_ue_1)]_u \right) \]
is an almost sure constant, the event above implies that  $\rho_u (e_1) \leq  33 R_u $ almost surely.
\qed


\section{A local lower bound on the chemical distance} \label{sec:local_lower_bound}

In preparation for the proof of Theorem~\ref{thm:main_lower}, we present a local bound on the chemical distance in random interlacements, in a carefully chosen scale.
We thus work with a box centred around $0$ and radius
\begin{equation} \label{eq: defLu}
	L_u := \frac{u^{4\varepsilon}}{\sqrt{u}}.
\end{equation}
We bound its $d_u$-distance to $B(2L_u)$.
For any two subsets $A,B\subset \mathbb{Z}^d$ 
\[
\hat{d}_{u} (A,B):=\inf\{d_{u} (x,y):x\in A\cap \mathcal{I}^{u}, y\in B\cap \mathcal{I}^{u}\}
.\]

\begin{prop}\label{prop:annulus}
	There exists a constant $ C_{\const{prop:annulus}{\ell} } = C_{\const{prop:annulus}{\ell}}(d) \in (0,\infty)$ satisfying the following. 
	For every $\varepsilon \in (0,1/24)$ there exists $ \newuBound \label{u:propAnnulus} = \uBound{u:propAnnulus} (\varepsilon) > 0 $ such that for every $u \in (0, \uBound{u:propAnnulus})$
	\[ \lim_{u \to 0} \prob^u
		\left( \hat{d}_u \left( \partial B (L_u), \partial B (2L_u) \right) 
		\leq C_{\const{prop:annulus}{\ell} }  \cdot u^{3\varepsilon}L_u^2\right) = 0.
	\]
\end{prop}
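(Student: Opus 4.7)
The plan is to exploit the local Poisson description of random interlacements hitting $B(2L_u)$ (Proposition \ref{prop: K-intersection}) and to reduce the lower bound on $\hat d_u$ to two ingredients: (i) control of cluster sizes in the intersection graph of the trajectories hitting $K$, via stochastic domination by a subcritical branching process, and (ii) a lower bound on the chemical cost of crossing the annulus within a single such cluster.

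For step (i), I apply Proposition \ref{prop: K-intersection} with $K = B(2L_u)$: the trajectories intersecting $K$ form a collection of $N_K\sim\Poi(u\capa(K))$ independent simple random walks $X^{(1)},\ldots,X^{(N_K)}$, each started from the normalised equilibrium measure $\tilde e_K$. Proposition \ref{prop:capa_ball} gives $\capa(K)\asymp L_u^{d-2}$, so $N_K$ is typically of polynomial order in $u^{-1}$. Truncate each walk at its first exit from $B(3L_u)$ and denote the resulting range by $R_j$; by Theorems \ref{thm: capacity_lln} and \ref{thm:cap_ldp}, $\capa(R_j)\asymp L_u^2$ with overwhelming probability uniformly in $j$. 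Define the intersection graph $H$ on $\{1,\ldots,N_K\}$ by $i\sim_H j$ iff $R_i\cap R_j\ne\emptyset$. By Proposition \ref{prop: RW and set}, conditional on $R_j$ any independent walk (under $\tilde e_K$) hits $R_j$ with probability at most $C\capa(R_j)/\capa(K)\asymp L_u^{4-d}$, so a breadth-first exploration of $H$ has mean offspring
\[
\mu_u \;\asymp\; u L_u^{d-2}\cdot L_u^{4-d} \;=\; u L_u^2 \;=\; u^{4\varepsilon},
\]
which is subcritical for small $u$. Standard Galton--Watson tail estimates (or iterated applications of Corollary \ref{cor:numofintersections}) yield $\prob(|\mathcal C|>k)\le (C\mu_u)^k$ for the cluster of any fixed trajectory; choosing a threshold $k_u=k_u(\varepsilon,d)$ large enough that $N_K(C\mu_u)^{k_u}=o(1)$ (a constant depending only on $\varepsilon$ and $d$ suffices under $\varepsilon<1/12$), a union bound shows that with probability $1-o(1)$ every cluster of $H$ contains at most $k_u$ trajectories.

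For step (ii), observe that any chemical path $\gamma$ in $\mathcal I^u$ connecting $\partial B(L_u)$ to $\partial B(2L_u)$ is supported, edge by edge, on trajectories lying in a single cluster $\mathcal C$ of $H$, since successive trajectories along $\gamma$ must share a vertex to switch. Writing $t_j$ for the number of edges of $\gamma$ using trajectory $T_j\in\mathcal C$, one has $|\gamma|=\sum_j t_j$ and $m:=|\mathcal C|\le k_u$. The Euclidean displacement of the sub-arc of $\gamma$ on $T_j$ is at most $C\sqrt{t_j\log(1/u)}$ simultaneously over all $j$ and all sub-intervals, with probability $1-o(1)$ (via a union bound over $N_K$ trajectories and dyadic sub-windows, combined with subgaussian tail bounds for simple random walk displacements). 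Since $\gamma$ has Euclidean displacement at least $L_u$, Cauchy--Schwarz gives
\[
L_u \;\le\; C\sqrt{\log(1/u)}\sum_{j=1}^m \sqrt{t_j} \;\le\; C\sqrt{k_u\,\log(1/u)\,|\gamma|},
\]
so $|\gamma|\ge c L_u^2/(k_u\log(1/u))\ge C_\ell u^{2\varepsilon}L_u^2$ for $u$ small, since $k_u=O_{\varepsilon,d}(1)$ and $u^{-2\varepsilon}$ eventually dominates $\log(1/u)$.

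The main obstacle is the delicate balancing in these two steps: $k_u$ must be chosen large enough that the Galton--Watson tail together with the $N_K$-fold union bound is negligible, yet small enough that the Cauchy--Schwarz bound still exceeds $C_\ell u^{2\varepsilon}L_u^2$ after absorbing the $\log(1/u)$ factor from uniform displacement control. The restriction $\varepsilon<1/12$ is what ensures that the competing polynomial exponents ($N_K$ grows like a power of $u^{-1}$, each trajectory's relevant trace has polynomial length, etc.) align so that such a $k_u$ exists simultaneously with the subcriticality $\mu_u=u^{4\varepsilon}\ll 1$.
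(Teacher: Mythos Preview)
Your step~(i) is essentially the paper's own argument: local Poisson description, capacity $\asymp L_u^2$ per trajectory, and stochastic domination of cluster sizes by a subcritical Galton--Watson tree with mean $\asymp u\,L_u^2$. Modulo the fact that a trajectory may return to $B(2L_u)$ after first exiting $B(3L_u)$ (so your truncation misses part of its range; the paper absorbs this with a $u^{-\varepsilon}$ slack, cf.\ Lemma~\ref{lem:rangeboundann} and the event $\mathfs{C}_u$), this part is fine.

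Step~(ii), however, has two genuine gaps. First, your decomposition $|\gamma|=\sum_{j=1}^m t_j$ with $m=|\mathcal C|$ implicitly assumes $\gamma$ visits each trajectory in a single contiguous block. This is false: $\gamma$ may switch from $T_i$ to $T_j$ and back, and the number of maximal single-trajectory sub-arcs is bounded not by $m$ but by the number $\mathcal K$ of intersection points among trajectories of $\mathcal C$. You therefore need a separate bound on $\mathcal K$ (the paper does this in Lemma~\ref{lem:maxintbound} via Corollary~\ref{cor:numofintersections}), and the Cauchy--Schwarz loses a factor $\sqrt{\mathcal K}$, not $\sqrt{m}$.

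Second, and more seriously, the claim ``the Euclidean displacement of the sub-arc of $\gamma$ on $T_j$ is at most $C\sqrt{t_j\log(1/u)}$'' does \emph{not} follow from subgaussian tails for simple random walk increments. A sub-arc of $\gamma$ lying in $\trace(T_j)$ is a path of length $t_j$ in the \emph{graph} $\trace(T_j)$, not a time-interval of the walk $T_j$. Self-intersections of $T_j$ create shortcuts in its trace, so a priori a path of graph-length $t_j$ in $\trace(T_j)$ could connect points at Euclidean distance much larger than $\sqrt{t_j}$ (equivalently, $t_j$ could be much smaller than the corresponding time-lag in $T_j$). Ruling this out is exactly the content of the cut-point machinery: a positive density of global cut-times (Proposition~\ref{prop:cutdensity}) forces any path in $\trace(T_j)$ to traverse all cut-points between its endpoints, which gives chemical distance $\gtrsim$ time-lag $\gtrsim$ (Euclidean displacement)$^2$. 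Your ``dyadic sub-windows plus subgaussian tails'' argument controls displacements of time-intervals of $T_j$, but never links $t_j$ to a time-interval; without cut-points this link simply does not exist.

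The paper circumvents both issues by first bounding $\mathcal K$, then using pigeonhole to find a thin concentric sub-annulus $A^{j(i)}_u$ in which the trajectories of $\mathcal C_i$ are pairwise disjoint; inside that sub-annulus $\gamma$ is forced onto a single trajectory, and the cut-point density then yields the quadratic lower bound on its length. Your Cauchy--Schwarz route could in principle be made to work, but only after inserting a bound on $\mathcal K$ and, crucially, the cut-point argument to justify the $\sqrt{t_j}$ displacement control.
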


The next subsection establishes some properties necessary for the proof of Proposition~\ref{prop:annulus}. We complete the proof in Subsection~\ref{subsec: proof of prop annulus}. 

\subsection{Chemical distance in the ring} \label{subsec: chemDist ring}

Throughout this section, we refer to the set $ B (2L_u) \setminus B (L_u)  $ as the $L_u$-\emph{ring}.
Our goal here is to give a lower bound on the chemical distance in random interlacements between the two disjoint boundary components, $\partial B(2L_u)$ and $\partial B (L_u)$ of the $L_u$-ring.

In the low-intensity regime, the expected number of trajectories intersecting the $L_u$-ring is small, i.e., of order $uL_u^{d-2}$. These trajectories create clusters crossing from $\partial B(2L_u)$ to $\partial B (L_u)$. The argument in the proof of Proposition~\ref{prop:annulus} depends on a uniform lower bound on the chemical distance inside all of these clusters.
For random walks in dimension $d \geq 5$, we can control the number of intersections between trajectories and the number of self-intersections of each trajectory. Our argument depends on these high-dimensional properties and it is divided into two main steps.
\begin{enumerate}
	\item We partition the $L_u$-ring into smaller concentric regions, which we call annuli. We prove that for each cluster, there exists an annulus $A_u^*$ such that none of the trajectories associated to the cluster intersect within $A_u^*$.
	\item Inside $A_u^*$, we can find a sub-annulus, such that the chemical distance of all the paths is proportional to the number of steps required to cross the sub-annulus. We obtain this  by showing that, with positive probability, there is a positive density of global cut-points in all the paths. 
\end{enumerate}

We start by verifying certain properties for trajectories and clusters in the ring, these are established below in Sections \ref{subsubsec: control trajectories} and \ref{subsubsec: control clusters}, respectively.


\subsubsection{Trajectories in the $L_u$-ring} \label{subsubsec: control trajectories}
First we give an estimate on the last exit time of a simple random walk from a box. For a simple random walk $X = (X(n))_{n \geq 0}$ and a finite set $A\subset\mathbb{Z}^d$,  let
\begin{equation} \label{eq:lastexit}
	\tau_A \coloneqq \sup \left\{n>0:X(n)\in A \right\}.
\end{equation}

\begin{lemma}\label{lem:rangeboundann}
	Let $X$ be a simple random walk starting from $x \in \partial B(L_u)$.
	There exist $c',\newexpo \label{e:rangeboundann} >0$ such that for any $\beta>0$ and any $ u \in  (0,1)$
	\begin{equation}
	\prw_x \left( \tau_{B(2L_u)}>u^{-\beta }L_u^2\right)\le c'e^{- \expo{e:rangeboundann} u^{-\frac{\beta}{2} }}.
	\end{equation}
	\end{lemma}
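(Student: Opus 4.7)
The plan is to establish this last-exit bound by an iterative ``escape-forever'' argument, leveraging the strong transience of the walk in $d \geq 5$. Fix a large constant $M > 0$ (to be chosen in terms of $d$) and set $T_0 := C(M) L_u^2$ for a sufficiently large constant $C(M)$. The key single-scale estimate I would establish first is a \emph{uniform escape lemma}: for every starting point $y \in \mathbb{Z}^d$, with probability at least some $p = p(M) > 0$, the walk $X$ starting at $y$ reaches a vertex $z$ with $|z|_\infty \geq M L_u$ within time $T_0$ and subsequently never enters $B(2 L_u)$. For $y \in B(M L_u)$, I apply Proposition~\ref{prop: RW escape} to the shifted walk $X(\cdot) - y$ with $n = 2M L_u$ and $\lambda = 2$, ensuring the walk exits $B_y(2M L_u)$ within $T_0$ with probability bounded below. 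From the exit point $z$, standard hitting probability estimates give $\prw_z(H_{B(2L_u)} < \infty) \lesssim \mathrm{cap}(B(2L_u))/|z|_\infty^{d-2} \lesssim 1/M^{d-2}$, which is strictly less than $1/2$ for $M$ large, using $d - 2 \geq 3$. For $y \notin B(M L_u)$, the walk is already far away, and the same hitting estimate directly gives escape probability $\geq 1/2$.

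Next, I iterate. Define stopping times $\sigma_0 := 0$ and
\[
\sigma_{k+1} := \inf\{n \geq \sigma_k + T_0 : X(n) \in B(2 L_u)\},
\]
so that $\sigma_k \geq k T_0$ on $\{\sigma_k < \infty\}$. By the strong Markov property at $\sigma_k$ and the uniform escape lemma,
\[
\prw(\sigma_{k+1} < \infty \mid \sigma_k < \infty, \mathcal{F}_{\sigma_k}) \leq 1 - p,
\]
since at $X(\sigma_k) \in B(2L_u)$, the walk has probability $\geq p$ of escaping $B(2L_u)$ forever within the next $T_0$ steps, in which case $\sigma_{k+1} = \infty$. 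Iterating yields $\prw(\sigma_N < \infty) \leq (1 - p)^N$, and for $N := \lceil u^{-\beta}/C(M) \rceil$, this is bounded by $c' e^{-\eta u^{-\beta}}$ with $\eta := |\log(1-p)|/C(M) > 0$.

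The last step is to relate $\tau_{B(2L_u)}$ to the event $\{\sigma_N < \infty\}$. Writing $M := \max\{k : \sigma_k < \infty\}$, one has $\tau \in [\sigma_M, \sigma_M + T_0)$ since no visit occurs after $\sigma_M + T_0$, so $\{\tau > N T_0\} \subset \{\sigma_M > (N-1)T_0\}$. Splitting by $M \geq N$ (bounded directly by $(1-p)^N$) versus $M < N$ (where $\sigma_M > (N-1)T_0$ forces at least one unusually long gap $\sigma_k - \sigma_{k-1}$, controlled using Proposition~\ref{prop: RW escape} for the sojourn inside $B(2L_u)$ combined with the uniform escape probability absorbing the large-gap correction), one assembles the final exponential bound with possibly adjusted constants $c', \eta$.

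The main obstacle is precisely this last translation step: the iteration controls the number of well-separated entries into $B(2L_u)$ exponentially, but $\tau$ involves also the times \emph{between} entries, which a priori could be long. The resolution uses that in each block of length $T_0$, the walker not only exits $B(2L_u)$ with positive probability, but also displaces to distance $\geq M L_u$, from where return is highly unlikely, so a long excursion away followed by a late return is quantitatively rare; this is where the choice $d \geq 5$ (giving $d - 2 \geq 3$) is essential, as it makes the hitting probability $M^{-(d-2)}$ small enough for the chosen $M$.
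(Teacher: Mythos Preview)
Your iteration is sound up to the point where you bound $\prw(\sigma_N<\infty)\le (1-p)^N$, but the ``last step'' is a genuine gap and your proposed patch does not close it. The obstruction you yourself flag is real: the inclusion $\{\tau_{B(2L_u)}>NT_0\}\subset\{\sigma_N<\infty\}$ fails. For instance, the walk can leave $B(2L_u)$ before time $T_0$, spend an arbitrarily long time in $\mathbb{Z}^d\setminus B(2L_u)$, and then make a single return at some time $\gg NT_0$; this gives $M=1$ with $\sigma_1$ (and hence $\tau$) huge. Your suggested fix invokes Proposition~\ref{prop: RW escape} ``for the sojourn inside $B(2L_u)$'', but the uncontrolled quantity in a long gap $\sigma_{k}-\sigma_{k-1}$ is the time spent \emph{outside} $B(2L_u)$ before returning, and Proposition~\ref{prop: RW escape} says nothing about that. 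No purely combinatorial rearrangement of your $\sigma_k$'s repairs this, because the return time from outside $B(2L_u)$, conditioned on eventually returning, has no tail bound uniform in the excursion's maximal displacement.

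The paper sidesteps the issue by choosing stopping times differently: it alternates hitting times of the two spheres $\partial B(4L_u)$ and $\partial B(2L_u)$, namely $\sigma_s^i$ (hit $\partial B(4L_u)$) and $\sigma_e^i$ (next hit of $\partial B(2L_u)$), and sets $Z'=\inf\{i:\sigma_e^i=\infty\}$. Then $\tau_{B(2L_u)}\le \sigma_s^{Z'}$ deterministically, $Z'$ is dominated by a geometric variable (uniform escape probability from $\partial B(4L_u)$), and crucially each increment $\sigma_s^i-\sigma_e^{i-1}$ and $\sigma_e^i-\sigma_s^i$ is a crossing time between two fixed concentric spheres of radius $\asymp L_u$, so Proposition~\ref{prop: RW escape} gives stretched-exponential tails for every piece. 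A union bound over at most $2Z'$ pieces then yields the lemma. If you redefine your $\sigma_k$'s as these alternating hitting times, your escape-forever idea becomes exactly the geometric domination of $Z'$ and the argument goes through. (Incidentally, transience, i.e.\ $d\ge 3$, is all that is used here; your emphasis on $d\ge 5$ is not needed for this particular lemma.)
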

\begin{proof}

Let $\sigma_{s}^1 \coloneqq H_{\partial B(4L_u)}$, $\sigma_{e}^1 \coloneqq \inf \left\{ t \geq \sigma_{s}^1 \st X (t) \in \partial B(2L_u) \right\}$, and for $i\geq 2$,
\[		
\sigma_{s}^i \coloneqq \inf \left\{ t \geq \sigma_{e}^{i-1} \st X (t) \in \partial B(4L_u) \right\}, 
		 \; \sigma_{e}^i \coloneqq \inf \left\{ t \geq \sigma_{s}^i \st X (t) \in \partial B(2L_u) \right\}.  
\]

Furthermore, define $Z'=\inf\{i\ge1: \sigma_{e}^i=\infty\}$. By Doob's invariance principle and the strong Markov property, there is a constant $\kappa\in (0,1)$ such that $Z'$ is dominated by a random variable $Z\sim \Geo (\kappa)$. 

By Proposition \ref{prop: RW escape} and random walk reversibility, for $u \in (0,1)$ and any $\beta > 0 $ we have, for every $i\geq 1$,
\[
\prw_x \left( \sigma_{e}^i-\sigma_{s}^{i}>u^{-\frac{\beta}{2} }L_u^2,  \; \sigma_{e}^i <\infty \right)
	\le 
	ce^{-\frac{1}{2}  \expo{e:RWescape} u^{-\frac{\beta}{2}}}    
\]
and
\[
	\prw_x \left(\sigma_{s}^i-\sigma_{e}^{i-1} > u^{-\frac{\beta}{2} }L_u^2 \Big| \sigma_{e}^{i-1}<\infty   \right)
	\le 
	ce^{-\frac{ 1 }{2} \expo{e:RWescape} u^{-\frac{\beta}{2}}}. 
\]

Thus by a union bound there exist $c',\expo{e:rangeboundann} = \expo{e:rangeboundann} (d) >0$ such that
\begin{align}
\prw_x (\tau_{B(2L_u)}&>u^{-\beta}L_u^2)\leq 
\prw_x \left(\sigma_{s}^{Z'}>u^{-\beta}L_u^2\right)
\leq \prw_x \left(\sigma_{s}^{Z}>u^{-\beta}L_u^2\right) \\
&\le \prw_x \left(\sum_{i=1}^{\lceil u^{-\frac{\beta}{2}}\rceil}(\sigma_s^i-\sigma_s^{i-1})>u^{-\beta}L_u^2\right)+\prw_x(Z>u^{-\frac{\beta}{2} })\\
& \leq \prw_x \Big(\exists i\in \{1,\ldots,\lceil u^{-\frac{\beta}{2}}\rceil\} \text{ s.t. }\sigma_s^i-\sigma_s^{i-1}>u^{-\frac{\beta}{2}} {L_u^2}\Big)+\prw_x (Z>u^{-\frac{\beta}{2}})\\
&\le c'e^{- \expo{e:rangeboundann} u^{-\frac{\beta}{2}}},
\end{align}
as required.
\end{proof}

Next, we wish to obtain control in high probability on the number of trajectories in $\RI^u$ intersecting the box $B(2L_u)$, and on the capacity of each of them. 
Recall that Equation~\eqref{eq: measure paths at K} defines this collection of paths as the point measure $\vec{\RI}^u_{B(2L_u)}$. In particular the number of paths in $\supp \vec{\RI}^u_{B(2L_u)}$ is the Poisson random variable $N_{B(2 L_u)}$ and each path starts at $\partial B(2L_u)$ according to the normalized equilibrium measure.
 
For $\varepsilon > 0 $ and $u > 0$, consider the events

\begin{equation} \label{eq:event_bound_paths}
	\mathfs{N}_u = \mathfs{N}_u (\varepsilon)  \coloneqq \left\{ N_{B(2 L_u)} < u^{1-\varepsilon}  L_u^{d-2} \right\} ,
\end{equation}

and
\begin{equation} \label{eq: event bound capacity}
 	\mathfs{C}_u = \mathfs{C}_u (\varepsilon) \coloneqq  	
	\left\{
	 \begin{array}{c} 
	 \text{ For every } w \in\supp\vec{\RI}^u_{B(2L_u)} \\[3pt]  \capa(\range(w)\cap B(2L_u))<u^{-4\varepsilon}L_u^2
 	 \end{array}
	\right\}.
\end{equation}

\begin{lemma} \label{lemma:numberbound}
	For $\varepsilon\in (0,{1}/{22})$ and $ u \in (0,1)$, consider the event $\mathfs{N}_u$ above and set
	$ \newbet \label{bet:eps} = \bet{bet:eps} (\varepsilon) =   \frac{1}{2} (d-4) -  \varepsilon (4d - 9)  > 0$. 
	There exists a constant $c = c(d) > 0$ such that
	\begin{equation}\label{eq:numberbound}
		\prob^u(\mathfs{N}_u)  = \prob^u \left(N_{B(2L_u)} < u^{-\bet{bet:eps}} \right) 
		\geq 1 -  cu^{\varepsilon u^{- \bet{bet:eps} }/2}.
	\end{equation}
\end{lemma}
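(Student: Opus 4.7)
The plan is a direct Chernoff bound for the Poisson variable $N_{B(2L_u)}$.

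First, I would reduce the statement to a tail bound on a Poisson law. Recall that $N_{B(2L_u)} \sim \Poi(u\cdot\capa(B(2L_u)))$, and by Proposition~\ref{prop:capa_ball} there exists $\cnt{c:capU}$ (depending only on $d$) so that $\capa(B(2L_u)) \leq \cnt{c:capU} (2L_u)^{d-2}$. Thus the intensity parameter satisfies
\[
\lambda_u \;:=\; u\cdot\capa(B(2L_u)) \;\leq\; C\, u\, L_u^{d-2},
\]
for a dimension-dependent constant $C$. Next I would verify the algebraic identity claimed in the statement, namely $u^{1-\varepsilon}L_u^{d-2}=u^{-\bet{bet:eps}}$. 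Using $L_u=u^{2\varepsilon-1/2}$,
\[
u^{1-\varepsilon}L_u^{d-2}=u^{1-\varepsilon+(2\varepsilon-1/2)(d-2)}=u^{-(d-4)/2+\varepsilon(2d-5)\cdot(-1)}=u^{-\bet{bet:eps}},
\]
so the two formulations of the event $\mathfs{N}_u$ coincide. In particular, with $a_u:=u^{-\bet{bet:eps}}$, the ratio $a_u/\lambda_u$ is of order $u^{-\varepsilon}$, hence tends to infinity as $u\downarrow 0$.

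Next I would apply the standard Chernoff estimate for a Poisson random variable $N\sim\Poi(\lambda)$, which gives, for any $a>\lambda$,
\[
\prw(N\geq a)\;\leq\; \exp\!\Bigl(-a\log\tfrac{a}{\lambda}+a-\lambda\Bigr).
\]
Inserting $a=a_u=u^{-\bet{bet:eps}}$ and $\lambda=\lambda_u\leq Cu L_u^{d-2}$, we have $\log(a_u/\lambda_u)\geq \varepsilon|\log u|-\log C$. For $u$ sufficiently small (i.e., $|\log u|$ large enough), this is at least $\tfrac{\varepsilon}{2}|\log u|$, and the linear terms $a_u-\lambda_u$ are absorbed into a multiplicative constant. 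This yields
\[
\prw\bigl(N_{B(2L_u)}\geq u^{-\bet{bet:eps}}\bigr)\;\leq\; c\,\exp\!\Bigl(-\tfrac{\varepsilon}{2}\,u^{-\bet{bet:eps}}\,|\log u|\Bigr)\;=\; c\, u^{\,\varepsilon\, u^{-\bet{bet:eps}}/2},
\]
which is the bound stated in the lemma (up to what appears to be a sign typo in the exponent; the meaningful bound tends to $0$ as $u\downarrow 0$, since $u^{-\bet{bet:eps}}\to\infty$).

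There is no substantial obstacle here: once one identifies $\lambda_u$ and $a_u$ and notes that $a_u/\lambda_u\to\infty$ polynomially in $u^{-\varepsilon}$, the Poisson Chernoff bound does all the work. The only point requiring care is tracking the power of $u$ in the final exponent, which is why the algebraic identity $u^{1-\varepsilon}L_u^{d-2}=u^{-\bet{bet:eps}}$ is verified explicitly, and ensuring that the condition $\bet{bet:eps}>0$ (which holds thanks to $\varepsilon<1/(2(2d-5))$, ensured by $\varepsilon<1/10$ and $d\geq 5$) is actually used so that $a_u\to\infty$.
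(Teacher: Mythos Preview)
Your proposal is correct and follows essentially the same approach as the paper: identify the Poisson parameter via the capacity bound, verify the algebraic identity $u^{1-\varepsilon}L_u^{d-2}=u^{-\bet{bet:eps}}$, and apply the Chernoff bound for a Poisson tail. You also correctly noticed the sign typo in the stated bound (the exponent should read $u^{-\bet{bet:eps}}$, as the paper's own proof confirms).
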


\begin{proof}
  We first note that the choice of the exponent $\bet{bet:eps}$ satisfies
  $ u^{1-\varepsilon}  L_u^{d-2} =  u^{-\bet{bet:eps}}$.
  Moreover, the assumption $ \varepsilon < {1}/{22} $ implies $\bet{bet:eps} > 0 $, provided $d\ge 5$.

	Recall that $N_{B(2L_u)}\sim\text{Poisson}\left(u \cdot \capa(B(2L_u)) \right)$, and by Proposition~\ref{prop:capa_ball}, we have that $ \capa (B(2L_u)) \leq c \cdot L_u^{d-2} $ with $c = 2^{d-2} \cnt{c:capU} > 0$. 
	Therefore, Proposition~\ref{prop: Poisson ST} implies that a Poisson random variable $N \sim \Poi (c \cdot u  L_u^{d-2})$ stochastically dominates $N_{B(2L_u)}$, and thus 
		\[
			\prob^u \left(\mathfs{N}_u^c\right) = \prob^u \left(N_{B(2 L_u)} \geq u^{1-\varepsilon}  L_u^{d-2}  \right) 
			\leq 	\prob^u \left(N \geq u^{1-\varepsilon}  L_u^{d-2} \right).
		\]
	
	  An application of the Poisson approximation to the sum of Bernoulli and the Chernoff inequality (see e.g.~\cite[Section 2.3]{Vershynin2018}) gives
	 	 \[
	 	 	\prob^u \left( N \geq u^{1-\varepsilon}  L_u^{d-2}  \right) 
	 	 	\leq 
	 	 		e^{- (c \cdot u  L_u^{d-2})} 
	 	 		\left( \frac{c  e \cdot  u  L_u^{d-2}}{u^{1-\varepsilon}  L_u^{d-2}} \right)^{u^{1-\varepsilon}  L_u^{d-2}}
	 	 	\preceq
	 	 		u^{\varepsilon u^{-\bet{bet:eps}}/2}  ,
	 	 \]
	 	 proving the lemma.
\end{proof}

We now show that, with high probability, all the paths in $\vec{\RI}^u_{B(2L_u)}$ have bounded capacity.

\begin{lemma}\label{lemma:unioncapa}
	For every $\varepsilon\in (0,1/24)$ there exists $ \newuBound \label{u:proof1} = \uBound{u:proof1} (\varepsilon) > 0$ such that the following holds for all $ u \in (0, \uBound{u:proof1} ) $. 
	There exist constants $\newexpo \label{e:unioncapa} >0$ and $C > 0$  such that, for the event $\mathfs{C}_u$ above,
	$$
	\prob^u\left(\mathfs{C}_u\right)\ge 1-Ce^{- \expo{e:unioncapa}  u^{-\varepsilon}}.
	$$
\end{lemma}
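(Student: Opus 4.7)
The idea is to combine three ingredients already available: Lemma~\ref{lemma:numberbound} to cap the number of trajectories hitting $B(2L_u)$, a suitable exit-time estimate (in the spirit of Lemma~\ref{lem:rangeboundann}) to bound the number of steps each such trajectory spends inside $B(2L_u)$, and the large-deviation bound Theorem~\ref{thm:cap_ldp} to control the capacity of the range from the step count. Monotonicity of capacity and a union bound over the (at most $u^{-\bet{bet:eps}}$ many) trajectories then yield $\mathfs{C}_u$.

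More precisely, first restrict to $\mathfs{N}_u$, which by Lemma~\ref{lemma:numberbound} costs probability at most $cu^{\varepsilon u^{-\bet{bet:eps}}/2}$; on this event there are at most $u^{-\bet{bet:eps}}$ trajectories in $\supp\vec{\RI}^u_{B(2L_u)}$. By Proposition~\ref{prop: K-intersection}, conditionally on their number these trajectories are i.i.d.\ random walks with law $\prw_{\tilde{e}_{B(2L_u)}}$, hence each starts at some $x\in\partial_i B(2L_u)$. For a single such path $w$, set $n_u:=\lceil u^{-\varepsilon}L_u^2\rceil$. Since the walk has already entered $B(2L_u)$, the proof of Lemma~\ref{lem:rangeboundann} adapts verbatim (the return-to-$\partial B(2L_u)$ argument after hitting $\partial B(4L_u)$ does not rely on the starting point lying on $\partial B(L_u)$) to give, with $\beta=\varepsilon$,
\[
\prw_x\!\left(\tau_{B(2L_u)}>n_u\right)\le c'e^{-\expo{e:rangeboundann} u^{-\varepsilon}}.
\]
On the complementary event $\range(w)\cap B(2L_u)\subseteq \range(w[0,n_u])$, so monotonicity of capacity and Theorem~\ref{thm:cap_ldp} give
\[
\prw_x\!\left(\capa(\range(w)\cap B(2L_u))>\gamma^+_d\, n_u\right)\le c'e^{-\expo{e:rangeboundann} u^{-\varepsilon}}+ce^{-\expo{e:cap_ldp} n_u}.
\]
Choosing $\uBound{u:proof1}$ so that $\gamma^+_d u^{\varepsilon}<1$ for $u\in(0,\uBound{u:proof1})$ ensures $\gamma^+_d n_u<u^{-2\varepsilon}L_u^2$, so the previous display bounds the probability of the ``bad'' event for one trajectory.

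A union bound over the $\le u^{-\bet{bet:eps}}$ trajectories yields
\[
\prob(\mathfs{C}_u^c)\le \prob(\mathfs{N}_u^c)+u^{-\bet{bet:eps}}\!\left(c'e^{-\expo{e:rangeboundann} u^{-\varepsilon}}+ce^{-\expo{e:cap_ldp} u^{-\varepsilon}L_u^2}\right).
\]
The polynomial prefactor $u^{-\bet{bet:eps}}=e^{\bet{bet:eps}\log(1/u)}$ is absorbed into a slight decrease of the exponential rate, since $\log(1/u)=o(u^{-\varepsilon})$; the term involving $\expo{e:cap_ldp} u^{-\varepsilon}L_u^2$ decays much faster as $L_u^2\to\infty$. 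Finally, the hypothesis $\varepsilon<1/12$ guarantees $\bet{bet:eps}>\varepsilon$ (since $\bet{bet:eps}\ge 1/2-5\varepsilon>\varepsilon$ when $d\ge 5$), so $\prob(\mathfs{N}_u^c)$ from Lemma~\ref{lemma:numberbound} decays faster than $e^{-\expo{e:unioncapa} u^{-\varepsilon}}$ for a suitable $\expo{e:unioncapa}>0$. Collecting terms produces the stated bound $\prob(\mathfs{C}_u)\ge 1-Ce^{-\expo{e:unioncapa} u^{-\varepsilon}}$. The only mildly delicate point is the bookkeeping of exponents (choosing $\beta=\varepsilon$ so that the capacity bound $\gamma^+_d u^{-\varepsilon}L_u^2$ fits below $u^{-2\varepsilon}L_u^2$, while keeping $u^{-\varepsilon}$ as the decay rate in the conclusion); everything else is a clean union bound.
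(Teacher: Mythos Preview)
Your proof is correct and follows essentially the same approach as the paper: bound the exit time via Lemma~\ref{lem:rangeboundann}, convert to a capacity bound via Theorem~\ref{thm:cap_ldp}, and union-bound over the at most $u^{-\bet{bet:eps}}$ trajectories using Lemma~\ref{lemma:numberbound}. You are in fact slightly more careful than the paper about the starting point of the walks (noting they start on $\partial_i B(2L_u)$ rather than $\partial B(L_u)$ and that the proof of Lemma~\ref{lem:rangeboundann} adapts), and your explicit check that $\bet{bet:eps}>\varepsilon$ to absorb $\prob(\mathfs{N}_u^c)$ is a nice clarification the paper leaves implicit.
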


\begin{proof}
	Let $X[0,\tau_{B(2L_u)}]$ be a path of a simple random walk starting at $x \in \partial B(L_u)$, where $\tau_{B(2L_u)}$  is the last exit time from $B(2L_u)$, as defined in~\eqref{eq:lastexit}. 
 	
	Let $\varepsilon \in (0,1/24)$ and assume that $u\in (0,(\gamma_d^+)^{-\frac{1}{2\varepsilon}})$, where $\gamma_d^+$ is the constant of Theorem~\ref{thm:cap_ldp}. With this choice $ u^{-2\varepsilon} > \gamma_d^+$ and thus from Lemma~\ref{lem:rangeboundann} and Theorem~\ref{thm:cap_ldp} we deduce 
	\begin{equation} \label{eq:unioncapa1}
		\begin{split}
			\prw_x &\left(
							\capa(X[0,\tau_{B(2L_u)}])>u^{-4 \varepsilon }L_u^2
						\right) \\
			&\le 
			\prw_x \left(
					\tau_{B(2L_u)}>u^{- 2\varepsilon }L_u^2
			\right)
			+\prw_x \left(
						\capa( X[0,u^{- 2\varepsilon }L_u^2]) > u^{-4\varepsilon } L_u^2
			\right) \\
			&\leq \prw_x \left(
					\tau_{B(2L_u)}>u^{- 2\varepsilon }L_u^2
			\right)+\prw_x \left(
						\capa( X[0,u^{- 2\varepsilon }L_u^2]) > \gamma_d^+ u^{-2\varepsilon} L_u^2
			\right) \\
			&\preceq  e^{- \expo{e:rangeboundann}  u^{ - \varepsilon } }+ e^{-  \expo{e:cap_ldp} u^{-2\varepsilon}L_u^2}
			\preceq   e^{- \tilde{\xi} u^{-\varepsilon}},
		\end{split}
	\end{equation}
	for  $\tilde{\xi} = \min \{  \expo{e:rangeboundann},  \expo{e:cap_ldp} \} $.
	
	We  now consider  the event $\mathfs{N}_u$ above, and recall $\bet{bet:eps} =  \frac{1}{2} (d-4) -  \varepsilon (4d - 9)$ from Lemma~\ref{lemma:numberbound}. 
	An application of Lemma~\ref{lemma:numberbound}, a union bound on the number of paths in $\vec{\RI}^u_{B(2L_u)}$ and~\eqref{eq:unioncapa1} yield:
	\begin{equation}
		\prob^u\left(\mathfs{C}_u^c \right) 
		\le \prob^u\left( \mathfs{N}_u^c \right)  + \prob^u\left(\mathfs{C}_u^c , \, \mathfs{N}_u \right)
		\preceq  u^{\varepsilon u^{-\bet{bet:eps}}/2} + u^{- \bet{bet:eps}}  e^{- \tilde{\xi} u^{-  \varepsilon } }.
	\end{equation}
	Fixing $0<\expo{e:unioncapa}<\tilde{\xi}=\min \{  \expo{e:rangeboundann},  \expo{e:cap_ldp} \}$ and since $\varepsilon \in (0,1/24)$ implies $\bet{bet:eps} > \varepsilon  $, one can find $\tilde{u} = \tilde{u} (\varepsilon) >0$ such that 
	\[
		u^{\varepsilon u^{-\bet{bet:eps}}/2} + u^{- \bet{bet:eps}}  e^{- \tilde{\xi} u^{-  \varepsilon } }\leq e^{- \expo{e:unioncapa} u^{-\varepsilon}},
	\]
	for all $u\in (0,\tilde{u})$, and thus the result follows with $\uBound{u:proof1}=\min\{\tilde{u},(\gamma_d^+)^{-\frac{1}{2\varepsilon}}\}>0$.
\end{proof}


\subsubsection{Clusters in the $L_u$-ring}  \label{subsubsec: control clusters}

We define an equivalence relation within the paths in $\supp \vec{\RI}^u_{B(2L_u)}$. Each equivalence class contains the paths in $\vec{\RI}^u_{B(2L_u)}$ corresponding to a connected component inside $ B(2L_u) $.

\begin{definition}\label{def:classes}
	We say that two paths $w_1,w_2\in\supp\vec{\RI}^u_{B(2L_u)}$ are equivalent if there exist paths $\eta_1,\ldots,\eta_m\in\supp\vec{\RI}^u_{B(2L_u)}$ for some $m\geq 1$ such that
	\[
		\mathcal{R}(w_1)\cap\mathcal{R}(\eta_1)\cap B(2L_u)\neq\emptyset, \qquad \mathcal{R}(w_2)\cap\mathcal{R}(\eta_m)\cap B(2L_u)\neq\emptyset,
	\]
	and for all $1\le i<m$ we have
	\[
		\mathcal{R}(\eta_i)\cap\mathcal{R}(\eta_{i+1})\cap B(2L_u)\neq\emptyset.
	\]
	We refer to these equivalence classes as clusters. Let $l$ be the random number of clusters, and denote them by $\mathcal{C}_1,\ldots,\mathcal{C}_l$.
\end{definition}

Our next goal is to control the number of paths in each of the clusters. We do this by controlling the capacity of the paths.

We consider a variation of $\vec{\RI}^u_{B(2L_u)}$ where the random walks satisfy the event $\mathfs{C}_u$ defined in~\eqref{eq: event bound capacity} above:
$$
\vec{\RI}^u(\mathfs{C}_u) \coloneqq \sum_{w \in\supp\vec{\RI}^u_{B(2L_u)}}\delta_w\ind{\{\capa( \range (w) \cap B(2L_u))<u^{-4\varepsilon}L_u^2\}}
.$$
Similarly to Definition \ref{def:classes}, we denote the equivalence classes of paths under the restricted point measure $\vec{\RI}^u(\mathfs{C}_u)$ as $\mathcal{C}_1(\mathfs{C}_u), \ldots , \mathcal{C}_{m}(\mathfs{C}_u)$, where $m$ denotes the random number of such clusters. 

\begin{lemma}\label{lem:dominatebybranching}
	Let $\varepsilon >0$ and $u>0$. The size of each of the clusters $\mathcal{C}_i(\mathfs{C}_u)$ is stochastically dominated by the total population size of a Galton-Watson branching process with mean $u^{4\varepsilon}$.
\end{lemma}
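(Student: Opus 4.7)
The plan is to couple a breadth-first exploration of the cluster $\mathcal{C}_i(\mathfs{C}_u)$ to a Galton--Watson tree by exploiting the Poisson point process structure of $\RI^u$ together with the capacity restriction imposed by $\mathfs{C}_u$.

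\textbf{Step 1 (local Poissonian description).} I would first invoke Proposition~\ref{prop: K-intersection} to realize $\vec{\RI}^u_{B(2L_u)}$ as $N_{B(2L_u)}\sim\Poi(u\,\capa(B(2L_u)))$ independent simple random walks starting from the normalized equilibrium measure $\tilde e_{B(2L_u)}$. The restricted process $\vec{\RI}^u(\mathfs{C}_u)$ obtained by keeping only those walks $w$ with $\capa(\range(w)\cap B(2L_u))<u^{-2\varepsilon}L_u^2$ is a $p$-thinning of $\vec{\RI}^u_{B(2L_u)}$, hence itself Poissonian with intensity bounded by that of $\vec{\RI}^u_{B(2L_u)}$.

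\textbf{Step 2 (BFS exploration).} Fix any root path $w_0\in\mathcal{C}_i(\mathfs{C}_u)$ and explore the cluster generation by generation: generation $k+1$ consists of those walks in $\vec{\RI}^u(\mathfs{C}_u)$ not previously uncovered whose range meets the range of some generation-$k$ walk inside $B(2L_u)$. By definition of the equivalence relation in Definition~\ref{def:classes}, $|\mathcal{C}_i(\mathfs{C}_u)|$ equals the total progeny of this exploration tree, so it suffices to dominate the offspring distribution.

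\textbf{Step 3 (offspring bound).} The key analytic input is that for any deterministic finite set $A\subset\mathbb{Z}^d$ the number of trajectories in $\RI^u$ whose range meets $A$ is $\Poi(u\,\capa(A))$, and after revealing those trajectories the remaining ones still form a Poisson point process with the same intensity on the complementary set of trajectories (spatial Markov property of the soup). Applying this conditionally on a parent walk $w$ with $A=\range(w)\cap B(2L_u)$, the event $\mathfs{C}_u$ forces $\capa(A)<u^{-2\varepsilon}L_u^2$, and using the scaling $L_u^2=u^{4\varepsilon-1}$ one obtains that the number of fresh trajectories in $\vec{\RI}^u_{B(2L_u)}$ hitting $A$ is stochastically dominated by $\Poi\bigl(u\cdot u^{-2\varepsilon}L_u^{2}\bigr)=\Poi(u^{2\varepsilon})$. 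Thinning by $\mathfs{C}_u$ and discarding already-uncovered walks can only decrease the count.

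\textbf{Step 4 (Galton--Watson domination).} Iterating Step 3 along the exploration, each reveal produces, conditionally on the past, a fresh offspring count dominated by an independent $\Poi$ variable. A standard coupling then yields an i.i.d.\ sequence of $\Poi(4u^{2\varepsilon})$ offspring counts (the constant $4$ is a safety factor absorbing the universal constants from Proposition~\ref{prop:capa_ball} used when comparing the capacity of $\range(w)\cap B(2L_u)$ to its unconditional envelope and covering any double-counting of paths uncovered by several parents simultaneously) so that $|\mathcal{C}_i(\mathfs{C}_u)|$ is dominated almost surely by the total population of a Galton--Watson process with offspring law $\Poi(4u^{2\varepsilon})$, whose mean is $4u^{2\varepsilon}$ as claimed.

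The main obstacle will be Step 3: making precise the spatial Markov reveal so that, after exposing the history of the BFS up to generation $k$, the count of new children of a generation-$k$ parent is a genuine fresh Poisson variable and not merely dominated in a less useful, non-independent sense. I would handle this by exposing, at each step, only the trajectories hitting the current parent's range in $B(2L_u)$; the remaining atoms of the PPP are unaffected and provide the independent Poisson increments needed to couple to the Galton--Watson tree.
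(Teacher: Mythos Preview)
Your proposal is correct and follows essentially the same approach as the paper: a BFS exploration of the cluster rooted at an arbitrary path, with the offspring count at each step dominated by a Poisson variable of parameter $u\cdot\capa(\range(w)\cap B(2L_u))\leq u\cdot u^{-2\varepsilon}L_u^2$, using the spatial Markov/Poisson structure of the interlacement soup (the paper cites \cite[Lemma~7.2]{ProcacciaTykesson} for this step). One small correction: your invocation of Proposition~\ref{prop:capa_ball} in Step~4 is misplaced, as that proposition concerns the capacity of balls; the factor $4$ in the paper actually arises from writing the capacity bound as $u^{-2\varepsilon}(2L_u)^2$ rather than $u^{-2\varepsilon}L_u^2$, giving mean $u\cdot 4u^{-2\varepsilon}L_u^2=4u^{2\varepsilon}$ directly without any extra constants.
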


\begin{proof}
	Consider a path $w \in \supp\vec{\RI}(\mathfs{C}_u)$. 
	We proceed to construct sets of paths by induction. 
	Let $W_1$ be the paths in $\supp\vec{\RI}(\mathfs{C}_u)$ that intersect $w$ inside $B(2L_u)$. 
	Now assume that we have constructed $W_{i-1}$. Let $W_i$ be the paths in $\supp\vec{\RI}(\mathfs{C}_u)$ that intersect $W_{i-1}$ inside $B(2L_u)$ but avoid, inside $B(2L_u)$, the path $w$ and the paths in $W_1,\ldots,W_{i-2}$. By \cite[Lemma 7.2]{ProcacciaTykesson} and the fact that all paths in $\supp\vec{\RI}(\mathfs{C}_u)$ have capacity smaller than $u^{-4\varepsilon} L_u^2$, we obtain that for all $i$, $|W_i|$ is stochastically dominated by independent Poisson variables with mean $u^{1-4\varepsilon} L_u^2|W_{i-1}|= u^{4\varepsilon}|W_{i-1}|$.
\end{proof}

Next we state a well known result for subcritical branching processes.
\begin{lemma}[{\cite[Theorem 3.16]{VanDerHofstad2017}}] \label{lem:vanderhofshtad}
	Let $G_i$ be the $i$-th generation of a subcritical branching process with mean offspring distribution $\lambda\in (0,1)$. Then there exists  $\newexpo \label{e:vanderhofshtad}=\lambda-1-\log\lambda >0$ such that for any $k>1$
	\[ 
		P\left(\sum_{i=1}^\infty G_i>k\right)<e^{- \expo{e:vanderhofshtad} k}
	.\]
\end{lemma}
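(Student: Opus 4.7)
The plan is to use the random walk representation of the total progeny of a Galton--Watson branching process. Let $X_1,X_2,\dots$ be i.i.d.\ copies of the offspring distribution (mean $\lambda$), and consider the random walk $S_n=\sum_{i=1}^{n}(X_i-1)$ with $S_0=0$. A standard depth-first exploration of the genealogical tree shows that the total progeny
\[
T:=\sum_{i=0}^{\infty}G_i=\inf\{n\geq 1:S_n=-1\}.
\]
Since $\mathbb{E}[X_i-1]=\lambda-1<0$, the walk has negative drift and $T<\infty$ almost surely; moreover
\[
\{T>k\}\subseteq\{S_j\geq 0\text{ for all }j\leq k\}\subseteq\{S_k\geq 0\}=\Bigl\{\sum_{i=1}^{k}X_i\geq k\Bigr\}.
\]
Thus it remains to establish a Cramér-type exponential bound on the upward deviation of the i.i.d.\ sum at its mean.

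I would then apply the standard Chernoff bound: for any $\theta>0$,
\[
\mathbb{P}\Bigl(\sum_{i=1}^{k}X_i\geq k\Bigr)\leq e^{-\theta k}\,\mathbb{E}[e^{\theta X_1}]^{k}=\exp\bigl(-k(\theta-\log M(\theta))\bigr),
\]
where $M(\theta)=\mathbb{E}[e^{\theta X_1}]$. The relevant offspring distribution in the application (Lemma \ref{lem:dominatebybranching}) is Poisson of mean $\lambda$, so $M(\theta)=\exp(\lambda(e^{\theta}-1))$. Optimizing in $\theta$, i.e.\ setting $1-\lambda e^{\theta}=0$, yields $\theta^{\ast}=-\log\lambda>0$ (using $\lambda<1$), and substitution gives
\[
\theta^{\ast}-\log M(\theta^{\ast})=-\log\lambda-\lambda(\lambda^{-1}-1)=\lambda-1-\log\lambda,
\]
which is strictly positive by strict convexity of $-\log$. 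Combining with the inclusion above yields the claimed bound $\mathbb{P}(T>k)\leq e^{-\xi k}$ with $\xi=\lambda-1-\log\lambda$.

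The argument above is essentially a textbook computation, so there is no genuine obstacle; the only point that requires care is identifying the correct offspring law. The exponent $\lambda-1-\log\lambda$ is precisely the Cramér rate function at $1$ for a Poisson$(\lambda)$ variable, and it matches the dominating branching process supplied by Lemma \ref{lem:dominatebybranching} (whose offspring variables are Poisson with mean $4u^{2\varepsilon}$). For a general offspring distribution of mean $\lambda$ the same scheme applies, but the rate would have to be replaced by the corresponding Legendre transform $I(1)=\sup_{\theta>0}(\theta-\log M(\theta))$; since the application only needs the Poisson case, the stated form of the exponent suffices.
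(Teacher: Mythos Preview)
The paper does not supply its own proof of this lemma; it is simply quoted from \cite[Theorem 3.16]{VanDerHofstad2017}. Your argument via the random-walk (depth-first exploration) representation of the total progeny, followed by a Chernoff bound, is correct and is in fact the standard textbook route to this estimate. You rightly point out that the explicit exponent $\lambda-1-\log\lambda$ is the Cram\'er rate at $1$ for a Poisson$(\lambda)$ variable, which is exactly the offspring law produced by Lemma~\ref{lem:dominatebybranching}; the lemma as stated in the paper is therefore implicitly for Poisson offspring (a general subcritical law would give $\sup_{\theta>0}(\theta-\log M(\theta))$ instead). One cosmetic point: the statement indexes the sum from $i=1$ while you set $T=\sum_{i\ge 0}G_i$; this off-by-one does not affect the exponential rate and is harmless for the application.
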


\begin{lemma}\label{lem:maxclustsizebound}
	Let $\varepsilon\in (0,1/24)$. There exists a constant $ \newuBound \label{u:proof2} = \uBound{u:proof2}(\varepsilon) >0$ such that for any $u \in (0 ,\uBound{u:proof2})$ and any $k>1$
	\[  
		\prob^u\left(\max \left\{\vert \mathcal{C}_1(\mathfs{C}_u) \vert,\ldots,\vert \mathcal{C}_m(\mathfs{C}_u) \vert \right\}>k\right)
		\preceq 
		u^{\varepsilon u^{-\bet{bet:eps}}/2} +   u^{-\bet{bet:eps}} e^{- \expo{e:vanderhofshtad} k},
	\]	
	where $\bet{bet:eps} > 0$ is as in Lemma~\ref{lemma:numberbound}. 

\end{lemma}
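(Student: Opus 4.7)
The plan is to decompose the event $\{\max_i |\mathcal{C}_i(\mathfs{C}_u)| > k\}$ according to whether the high-probability ``good'' event $\mathfs{N}_u$ holds, and then apply the branching process domination of Lemma~\ref{lem:dominatebybranching} together with the tail bound of Lemma~\ref{lem:vanderhofshtad} on the remaining part. The two terms in the conclusion correspond exactly to these two contributions.

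First I would choose $\uBound{u:proof2} \in (0, \uBound{u:proof1})$ small enough so that $4u^{2\varepsilon} < 1$ for every $u \in (0, \uBound{u:proof2})$; this ensures that the Galton-Watson process appearing in Lemma~\ref{lem:dominatebybranching}, with offspring mean $\lambda_u := 4u^{2\varepsilon}$, is subcritical, so Lemma~\ref{lem:vanderhofshtad} applies with rate $\expo{e:vanderhofshtad} = \expo{e:vanderhofshtad}(u) = \lambda_u - 1 - \log \lambda_u > 0$. (Up to shrinking $\uBound{u:proof2}$ further, the rate $\expo{e:vanderhofshtad}$ is uniformly bounded below on $(0,\uBound{u:proof2})$, so it plays the role of a positive constant in the statement.)

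Next, by Lemma~\ref{lemma:numberbound},
\[
\prob(\mathfs{N}_u^c) = \prob\bigl(N_{B(2L_u)} \geq u^{-\bet{bet:eps}}\bigr) \preceq u^{\varepsilon u^{-\bet{bet:eps}}/2},
\]
which already accounts for the first term on the right-hand side of the claim. On the event $\mathfs{N}_u$, the number of clusters satisfies $m \leq N_{B(2L_u)} < u^{-\bet{bet:eps}}$, since each cluster contains at least one path. Applying Lemma~\ref{lem:dominatebybranching} to each cluster individually and then Lemma~\ref{lem:vanderhofshtad} to the dominating subcritical branching process, we obtain, for each $i$,
\[
\prob\bigl(|\mathcal{C}_i(\mathfs{C}_u)| > k\bigr) \leq e^{-\expo{e:vanderhofshtad} k}.
\]
A union bound over the at most $u^{-\bet{bet:eps}}$ clusters then gives
\[
\prob\Bigl(\max_{1 \leq i \leq m}|\mathcal{C}_i(\mathfs{C}_u)| > k, \; \mathfs{N}_u \Bigr) \leq u^{-\bet{bet:eps}} e^{-\expo{e:vanderhofshtad} k}.
\]

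Combining the two estimates yields the desired bound. The only mildly delicate point is making the union bound legitimate: the enumeration of clusters is itself random, but since on $\mathfs{N}_u$ the total number of paths is bounded by the deterministic quantity $u^{-\bet{bet:eps}}$, one may either union-bound over each path's cluster (introducing the factor $u^{-\bet{bet:eps}}$) or, equivalently, condition on $N_{B(2L_u)}$ and sum. Either formulation avoids the need to control $m$ directly, and this is the only step where the restriction to $\mathfs{N}_u$ is used.
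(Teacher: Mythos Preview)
Your proposal is correct and follows essentially the same approach as the paper: decompose according to $\mathfs{N}_u$, use Lemma~\ref{lemma:numberbound} on $\mathfs{N}_u^c$, and on $\mathfs{N}_u$ union-bound over the at most $u^{-\bet{bet:eps}}$ clusters, applying Lemmas~\ref{lem:dominatebybranching} and~\ref{lem:vanderhofshtad}. The only cosmetic difference is that the paper fixes an explicit $\uBound{u:proof2}$ so that $4u^{2\varepsilon}\le 1/2$, yielding the concrete value $\expo{e:vanderhofshtad}=\ln 2-1/2$, whereas you argue more abstractly that $\expo{e:vanderhofshtad}(u)$ is uniformly bounded below for small $u$; both are fine.
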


\begin{proof}
	First we can bound the number of equivalence classes $m$ with the total number of paths in  $\vec{\RI}^u(\mathfs{C}_u)$. 
	Recall that the later  is the random variable $N_{B(2L_u)}$, and from Lemma~\ref{lemma:numberbound} we have that
	$\prob^u(N_{B(2L_u)} \geq u^{1-\varepsilon}  L_u^{d-2} )\le cu^{\varepsilon u^{-\bet{bet:eps}}/2}$.
		The last estimate, together with a union bound, gives
		\begin{equation} \label{eq:maxclust1}
		\begin{split}
			&\prob^u \left( \max \left\{\vert \mathcal{C}_1(\mathfs{C}_u) \vert,\ldots,\vert \mathcal{C}_m(\mathfs{C}_u) \vert \right\}>k \right)\\
			&\le\prob^u\Big(m>u^{1-\varepsilon} L_u^{d-2} \Big) 
				+ \prob^u\Big( m<u^{1-\varepsilon} L_u^{d-2}, \, \max \left\{\vert \mathcal{C}_1(\mathfs{C}_u) \vert,\ldots,\vert \mathcal{C}_m(\mathfs{C}_u)  \vert \right\}>k \Big) \\
			&\preceq u^{\varepsilon u^{-\bet{bet:eps}}/2}+u^{1-\varepsilon} L_u^{d-2} \max_{1 \leq i \leq m}  \prob^u\left( \vert \mathcal{C}_i(\mathfs{C}_u) \vert>k \right) \\
			&= u^{\varepsilon u^{-\bet{bet:eps}}/2}+u^{- \bet{bet:eps}}  \max_{1 \leq i \leq m}  \prob^u\left( \vert \mathcal{C}_i(\mathfs{C}_u) \vert>k \right).
		\end{split}
		\end{equation}
		Lemmas \ref{lem:dominatebybranching} and \ref{lem:vanderhofshtad} imply tightness on the size of each cluster. Fixing $\uBound{u:proof2}(\varepsilon)=\left(\frac{1}{2}\right)^\frac{1}{4\varepsilon}$ gives for $\expo{e:vanderhofshtad} = \log(2)-1/2>0$:
		\[
			  \prob^u\left( \vert \mathcal{C}_i(\mathfs{C}_u) \vert>k \right) \leq  e^{- \expo{e:vanderhofshtad} k}, \qquad  \text{ for all }1 \leq i \leq m. 
		\]
		The bound above and~\eqref{eq:maxclust1} imply the the result.
\end{proof}

Lemma~\ref{lem:maxclustsizebound} bounds the number of clusters in the point measure $\vec{\RI}^u(\mathfs{C}_u)$. This variation of random interlacements is equal to the original process  $\vec{\RI}^u$ on the event $\mathfs{C}_u$. In particular, on  $\mathfs{C}_u$,  we have that $l = m $ and $(\mathcal{C}_i(\mathfs{C}_u))_{i=1}^m$ and $(\mathcal{C}_i)_{i=1}^l$ have the same distribution. 
Lemma~\ref{lemma:unioncapa} shows that the event  $ \mathfs{C}_u  $ has probability at least $ 1-ce^{- \expo{e:unioncapa}  u^{-\epsilon}} $ for  any $u \in (0 ,u_1)$,  and  we thus immediately get the next corollary.

\begin{cor}\label{cor:realclustbound}
For any $\varepsilon\in (0,1/24)$ there exists
$\newuBound \label{u:proof3} = \uBound{u:proof3}  (\varepsilon) = \min\{\uBound{u:proof1},\uBound{u:proof2}\}>0 $ 
such that for any $u \in (0 ,\uBound{u:proof3})$  and $ k > 1$
\[  
 	\prob^u\left(\max\{\vert \mathcal{C}_1 \vert,\ldots,\vert \mathcal{C}_l \vert \}>k\right) 
 	\preceq u^{\varepsilon u^{-\bet{bet:eps}}/2} + u^{-\bet{bet:eps}} e^{- \expo{e:vanderhofshtad} k} + e^{- \expo{e:unioncapa}  u^{-\varepsilon}}.
\]
\end{cor}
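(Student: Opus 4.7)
The plan is to deduce this corollary directly from Lemma~\ref{lem:maxclustsizebound} and Lemma~\ref{lemma:unioncapa} by a simple conditioning on the event $\mathfs{C}_u$ defined in~\eqref{eq: event bound capacity}. The key observation, already highlighted in the paragraph preceding the statement, is that $\vec{\RI}^u$ and $\vec{\RI}^u(\mathfs{C}_u)$ coincide on the event $\mathfs{C}_u$: the restriction that cut out trajectories with large capacity is vacuous there. Consequently, on $\mathfs{C}_u$ the random number of clusters $l$ equals $m$ and the families $(\mathcal{C}_i)_{i=1}^l$ and $(\mathcal{C}_i(\mathfs{C}_u))_{i=1}^m$ are identical as point sets of paths.

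Concretely, first I would choose $\uBound{u:proof3} \coloneqq \min\{\uBound{u:proof1},\uBound{u:proof2}\}$ so that both Lemma~\ref{lemma:unioncapa} and Lemma~\ref{lem:maxclustsizebound} apply throughout $u\in(0,\uBound{u:proof3})$. Then I would split
\[
\prob\left(\max\{\vert \mathcal{C}_1 \vert,\ldots,\vert \mathcal{C}_l \vert\}>k\right)
\leq
\prob\left(\max\{\vert \mathcal{C}_1 \vert,\ldots,\vert \mathcal{C}_l \vert\}>k,\;\mathfs{C}_u\right)
+ \prob\left(\mathfs{C}_u^{\,c}\right).
\]
On $\mathfs{C}_u$ the two collections of clusters agree, so the first term is bounded by
\[
\prob\left(\max\{\vert \mathcal{C}_1(\mathfs{C}_u) \vert,\ldots,\vert \mathcal{C}_m(\mathfs{C}_u) \vert\}>k\right)
\preceq u^{\varepsilon u^{-\bet{bet:eps}}/2} + u^{-\bet{bet:eps}} e^{- \expo{e:vanderhofshtad} k},
\]
by Lemma~\ref{lem:maxclustsizebound}. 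The second term is controlled by Lemma~\ref{lemma:unioncapa}, which yields $\prob(\mathfs{C}_u^{\,c}) \leq C e^{-\expo{e:unioncapa} u^{-\varepsilon}}$. Combining these two bounds gives exactly the three-term estimate claimed in the corollary.

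There is no real obstacle here: the work has already been done in Lemmas~\ref{lemma:unioncapa} and~\ref{lem:maxclustsizebound}, and the only remaining point is verifying that cluster structure is preserved on $\mathfs{C}_u$. This is immediate from the definition of $\vec{\RI}^u(\mathfs{C}_u)$ as the restriction of $\vec{\RI}^u_{B(2L_u)}$ to paths with small capacity, since on $\mathfs{C}_u$ no path is discarded and Definition~\ref{def:classes} produces the same equivalence classes whether applied to $\vec{\RI}^u_{B(2L_u)}$ or to $\vec{\RI}^u(\mathfs{C}_u)$.
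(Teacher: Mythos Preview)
Your proposal is correct and matches the paper's approach exactly: the paper does not give a separate proof but simply notes in the paragraph preceding the corollary that on $\mathfs{C}_u$ the cluster collections $(\mathcal{C}_i)_{i=1}^l$ and $(\mathcal{C}_i(\mathfs{C}_u))_{i=1}^m$ coincide, and then combines Lemma~\ref{lem:maxclustsizebound} with the bound $\prob(\mathfs{C}_u^c)\leq Ce^{-\expo{e:unioncapa} u^{-\varepsilon}}$ from Lemma~\ref{lemma:unioncapa}. Your splitting argument is precisely this.
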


For each $1\le i \le l$, let $\mathcal{K}_i$ be the number of intersections between pairs of random walk paths in the cluster $\mathcal{C}_i$ (not including self-intersections). 

For $u > 0$ and $\varepsilon > 0 $ define the event
\begin{equation} \label{eq:maxintbound}
	\mathfs{K}_u = \mathfs{K}_u (\varepsilon)  \coloneqq \left\{ \max\{\mathcal{K}_1,\ldots,\mathcal{K}_l\} < u^{-\varepsilon/2} \right\} .
\end{equation}

\begin{lemma}\label{lem:maxintbound} 
For every $\varepsilon\in (0,1/24)$ there exist $ \newuBound \label{u:proof4} = \uBound{u:proof4} (\varepsilon) > 0 $ and $ c = c(d) > 0$ such that for any $u \in (0, \uBound{u:proof4})$ 
	\[  
		\prob^u \left( \mathfs{K}_u \right) \geq 1 - c u^{  \bet{bet:eps} }.
	\]
\end{lemma}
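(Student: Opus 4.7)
The plan is to combine three ingredients already developed in this section: a tail bound on the total number of interlacement trajectories hitting $B(2L_u)$, a tail bound on the size of each cluster, and a pairwise bound on the number of common points of two independent walks. Setting $n_u := u^{-\bet{bet:eps}}$, Lemma~\ref{lemma:numberbound} ensures that $\prob(N_{B(2L_u)} > n_u)$ is super-polynomially small in $u$. Conditional on $\{N_{B(2L_u)} = n\}$, Proposition~\ref{prop: K-intersection} allows me to treat the trajectories in $\supp \vec{\RI}^u_{B(2L_u)}$ as $n$ i.i.d.\ simple random walks $(X^j)$ started from $\tilde{e}_{B(2L_u)}$.

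Next, I choose $k_u := C_1 \log(1/u)$ with $C_1 \geq 2\bet{bet:eps}/\expo{e:vanderhofshtad}$, so that Corollary~\ref{cor:realclustbound} yields $\prob(\max_i |\mathcal{C}_i| > k_u) \leq c\, u^{\bet{bet:eps}}$; under this choice the dominant term in the corollary simplifies as $u^{-\bet{bet:eps}} e^{-\expo{e:vanderhofshtad} k_u} = u^{C_1 \expo{e:vanderhofshtad} - \bet{bet:eps}} \leq u^{\bet{bet:eps}}$, while the other two error terms are super-polynomially small. Independently, I set $M_u := (C_3 \log(1/u))^{d/(d-2)}$ with $C_3$ large enough that Theorem~\ref{thm:intersections2rw}, applied to each fixed pair of independent walks, yields $\prw(|\range(X^j) \cap \range(X^{j'})| > M_u) \leq u^{3\bet{bet:eps}}$. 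A union bound over the at most $n_u^2 = u^{-2\bet{bet:eps}}$ pairs of walks then produces
\[
\prob\bigl(\max_{j \neq j'} |\range(X^j) \cap \range(X^{j'})| > M_u,\ N_{B(2L_u)} \leq n_u\bigr) \leq u^{\bet{bet:eps}}.
\]

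On the intersection of the three high-probability events described above, every cluster $\mathcal{C}_i$ consists of at most $k_u$ walks from the i.i.d.\ family $(X^j)$, and any two of them share at most $M_u$ points, so
\[
\mathcal{K}_i \leq \binom{|\mathcal{C}_i|}{2}\, \max_{j \neq j'} |\range(X^j) \cap \range(X^{j'})| \leq k_u^2\, M_u = O\bigl((\log(1/u))^{(3d-4)/(d-2)}\bigr),
\]
which is bounded by $u^{-\varepsilon/2}$ for all $u$ smaller than a suitable threshold $\uBound{u:proof4}(\varepsilon)$, since any polylogarithmic factor is dominated by $u^{-\varepsilon/2}$. Summing the three failure probabilities then gives $\prob(\mathfs{K}_u^c) \leq c\, u^{\bet{bet:eps}}$, as required. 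The main technical point is calibrating $n_u$, $k_u$, $M_u$ simultaneously so that every error term collapses to $O(u^{\bet{bet:eps}})$; a secondary observation is that Theorem~\ref{thm:intersections2rw}, although typically stated for walks started at fixed points, applies here uniformly over the (random) starting points because the two walks are independent and we may simply condition on the starting points.
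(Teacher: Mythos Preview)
Your argument is correct and follows essentially the same three-step strategy as the paper: control $N_{B(2L_u)}$ via Lemma~\ref{lemma:numberbound}, control $\max_i|\mathcal{C}_i|$ via Corollary~\ref{cor:realclustbound} with a logarithmic threshold $k_u\asymp\log(1/u)$ (with the same calibration $C_1\geq 2\bet{bet:eps}/\expo{e:vanderhofshtad}$ the paper uses for its $\bet{bet:2}$), and control the number of mutual points via the two-walk intersection bound of Theorem~\ref{thm:intersections2rw}.

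The one organisational difference is worth recording. The paper first restricts to a single cluster, conditions on $\max_i|\mathcal{C}_i|\leq k(u)$, and then invokes Corollary~\ref{cor:numofintersections} for $k(u)$ walks, applying a union bound over the at most $u^{-\bet{bet:eps}}$ clusters. You instead take a union bound over \emph{all} $\binom{N_{B(2L_u)}}{2}\leq u^{-2\bet{bet:eps}}$ ordered pairs of trajectories and bound the maximal pairwise intersection by $M_u$ directly from Theorem~\ref{thm:intersections2rw}; only afterwards do you use the cluster-size bound to estimate $\mathcal{K}_i\leq \binom{|\mathcal{C}_i|}{2}M_u$. Your ordering has a small advantage: since the union bound is taken over pairs of the full i.i.d.\ family $(X^j)_{j\leq N}$ furnished by Proposition~\ref{prop: K-intersection}, no conditioning on the cluster structure is needed when applying Theorem~\ref{thm:intersections2rw}, which keeps the independence assumption transparent. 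The paper's route is slightly more economical in the union bound (at most $u^{-\bet{bet:eps}}$ clusters rather than $u^{-2\bet{bet:eps}}$ pairs), but both yield the same $O(u^{\bet{bet:eps}})$ error and the same polylogarithmic bound on $\mathcal{K}_i$, well below $u^{-\varepsilon/2}$.
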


\begin{proof} 
	Let $\lambda > 0$ and $\newbet \label{bet:2} > 0$, to be chosen along the proof.
  Set \[ k(u)\coloneqq \log(u^{-\bet{bet:2}}),\] which we use below as a  bound on the size of all clusters.
  The number of clusters $l$ is always bounded by $N_{B(2L_u)}$, which by Lemma~\ref{lemma:numberbound} can be bounded by $N_{B(2L_u)}< u^{1-\varepsilon}L_u^{d-2} = u^{-\bet{bet:eps}}$, with probability greater than
  $1-cu^{\varepsilon u^{-\bet{bet:eps}}/2}$. Then
  \begin{equation}\label{eq:clustermaxbounding1}
  \begin{split}
  	\prob^u &\left(\max\{\mathcal{K}_1,\ldots,\mathcal{K}_l\}>\lambda k(u)^2 \right)  \\
  	&\preceq
  	u^{\varepsilon u^{-\bet{bet:eps}}/2} +  \prob^u \left(\max\{\mathcal{K}_1,\ldots,\mathcal{K}_l\}>\lambda k(u)^2, \, l <  u^{-\bet{bet:eps}}\right) .
  \end{split}
  \end{equation}
	Now, by the law of total probability followed by a union bound,
	\begin{equation}\label{eq:clustermaxbounding2}
	\begin{split}
		\prob^u &\left(\max\{\mathcal{K}_1,\ldots,\mathcal{K}_l\}>\lambda k(u)^2 , l <  u^{-\bet{bet:eps}}\right) \\
		&\le \prob^u\Big(\max\{|\mathcal{C}_1|,\ldots,|\mathcal{C}_l|\}>k(u)\Big)\\
		& +\prob^u\Big(\max\{\mathcal{K}_1,\ldots,\mathcal{K}_l\}>\lambda k(u)^2 , \,l <  u^{-\bet{bet:eps}} \Big| \max\{|\mathcal{C}_1|,\ldots,|\mathcal{C}_l|\}\le k(u) \Big) \\
		&\le \prob^u\Big(\max\{|\mathcal{C}_1|,\ldots,|\mathcal{C}_l|\}>k(u)\Big)\\
		& + u^{-\bet{bet:eps}}  \max_{1 \leq  i \leq l} \,\prob^u\Big( \mathcal{K}_i >\lambda k(u)^2  \Big| \max\{|\mathcal{C}_1|,\ldots,|\mathcal{C}_l|\}\le k(u) \Big) . 
	\end{split}
	\end{equation}
	
	Thus by~\eqref{eq:clustermaxbounding1},~\eqref{eq:clustermaxbounding2}, and applications of Corollary~\ref{cor:realclustbound} and Corollary~\ref{cor:numofintersections}, we get that for any $u \in (0,\uBound{u:proof3})$
	\begin{equation}
	\begin{split}
		\prob^u &\left( \max\{\mathcal{K}_1,\ldots,\mathcal{K}_l\}>\lambda k(u)^2 \right) \\
					& 
					\preceq u^{\varepsilon u^{-\bet{bet:eps}}/2} + u^{-\bet{bet:eps}}e^{-\expo{e:vanderhofshtad}k(u)} + e^{- \expo{e:unioncapa}  u^{-\varepsilon}}
					+ u^{-\bet{bet:eps}} k(u)^2e^{- \expo{e:intersections2rw} \lambda^{\frac{3}{5}}}.
	\end{split}
	\end{equation}
	Our choice of $k(u)$ yields 
	\begin{equation}
	\begin{split}
		\prob^u &\left( \max\{\mathcal{K}_1,\ldots,\mathcal{K}_l\}>\lambda k(u)^2 \right) \\
					& 
					\preceq u^{\varepsilon u^{-\bet{bet:eps}}/2} + u^{-\bet{bet:eps}} u^{ \expo{e:vanderhofshtad} \bet{bet:2} } + e^{- \expo{e:unioncapa}  u^{-\varepsilon}}
					+ u^{-\bet{bet:eps}} \left( \log\left( u^{-\bet{bet:2}} \right) \right)^2e^{- \expo{e:intersections2rw} \lambda^{\frac{3}{5}}}.
	\end{split}
	\end{equation}
	Choose $\bet{bet:2} = 2 \bet{bet:eps} /  \expo{e:vanderhofshtad} $ and $\lambda=u^{-\varepsilon/2}/k(u)^2$. Substituting our choice into the last bound, it follows for some $\uBound{u:proof4}  \in (0, \uBound{u:proof3} )$ small enough that for all $u\in (0,\uBound{u:proof4})$
	\begin{equation}
	\begin{split}
		\prob^u &\left(\max\{\mathcal{K}_1,\ldots,\mathcal{K}_l\}>u^{-\varepsilon/2} \right) \\
					&\preceq u^{\varepsilon u^{-\bet{bet:eps}}/2} + u^{\bet{bet:eps}}  + e^{- \expo{e:unioncapa}  u^{-\varepsilon}}
					+ u^{-\bet{bet:eps}} \left( \log\left( u^{-\bet{bet:2}} \right) \right)^2e^{- \expo{e:intersections2rw} (  \log (u^{-\bet{bet:2}}) )^{-6/5} u^{-3\varepsilon / 10}  } \\
					&\preceq u^{\bet{bet:eps}},
	\end{split}
	\end{equation}
	as required. 
\end{proof}

\subsection{Proof of~Proposition \ref{prop:annulus}}  \label{subsec: proof of prop annulus}

The results above set the necessary properties to obtain a probabilistic bound over the seed event of the multi-scale renormalization described below.

\begin{figure} 
\begin{tikzpicture}[scale=0.8]
	\coordinate (O) at (0,0);
	
	\draw[ultra thick, fill=white] (-3, -3) rectangle (3, 3) node [anchor=west]    {\tiny $ B(2L_u) $};
	
  \draw[fill=white ] (-5/2, -5/2) rectangle (5/2, 5/2);
 	\draw[black,  fill=blue!40] (-4/2, -4/2) rectangle (4/2, 4/2);
	\draw[thin, blue, fill=blue!20, dashed] (-11/6, -11/6) rectangle (11/6, 11/6);
	\draw[thin, blue, fill=blue!40, dashed] (-10/6, -10/6) rectangle (10/6, 10/6);

	\draw[black, fill=white] (-3/2, -3/2) rectangle (3/2, 3/2);

	\draw[black] (-3/2, -3/2) rectangle (3/2, 3/2);
	
	\draw[ultra thick, fill=white] (-1, -1) rectangle (1, 1) node[pos=.5] {\tiny $B(L_u)$};
	\node[blue] at (17/8, 9/4) (c)    {\tiny $ A^{j(i)}_u $};

\end{tikzpicture}
\caption{The region between the boxes $B(L_u)$ and $B(2L_u)$ is the $L_u$-ring. The $L_u$-ring is partitioned into disjoint (squared) annuli of equal width. The annulus $A^{j(i)}_u$ (coloured) is associated to the cluster $\mathcal{C}_i$, meaning that it satisfies Property~\ref{cond:C1}. We further divide $A^{j(i)}_u$ into three disjoint sub-annuli (indicated with different shades). 
For the paths in $\mathcal{C}_i$, their crossings of the middle sub-annuli satisfy Property~\ref{cond:C2}. \label{fig:annulipartition}}
\end{figure}
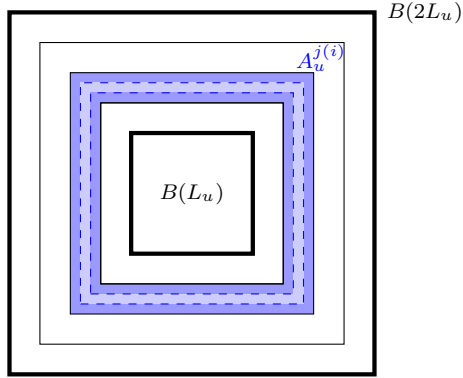

\begin{proof}[Proof of Proposition \ref{prop:annulus}]
	A \emph{crossing of the $L_u$-ring} is any path in the random interlacements graph $\cI^{u}$ that starts in $\partial B(L_u)$, ends in $\partial B(2L_u)$ and stays inside $B(2L_u)$.
	The aim of this proof is to show that the following property holds with high probability:
	\begin{enumerate}[label=(\Roman*)]
		\item  \label{aim} Every crossing of the $L_u$-ring must contain a path $w\in \supp\vec{\RI}^{u}_{B(2L_u)}$ that includes a sub-path of chemical distance at least $C_{\const{prop:annulus}{\ell}} u^{2\varepsilon} L_u^2 $ that does not intersect any of the other paths in $\supp\vec{\RI}^{u}_{B(2L_u)}$. 
	\end{enumerate}
	When this property holds, any crossing of the $L_u$-ring must use at least one of these sub-paths, providing us with the required lower bound on the chemical distance. Recalling that $\mathcal{I}^u$ is infinite and connected, it follows that if $B(L_u)\cap \mathcal{I}^u\neq\emptyset$ then there exists a crossing of the $L_u$-ring. Consequently, using the fact that $\prw(B(L_u)\cap \mathcal{I}^u\neq\emptyset)\geq 1-e^{-c_1uL_u^{d-2}}$, Property~\ref{aim} completes the proof.

	Fix $\varepsilon > 0$, set $ \uBound{u:propAnnulus} = \uBound{u:proof4}$ and  let $ u \in (0, \uBound{u:propAnnulus})$.
	Let us recall the events:
	\[
		\mathfs{K}_u = \{\max\{\mathcal{K}_1,\ldots,\mathcal{K}_l\}<u^{-\varepsilon/2}\}, \qquad
		\mathfs{N}_u = \{ N_{B(2 L_u)} < u^{1 - \varepsilon} L_u^{d-2} \}.
	\]
	By Lemma~\ref{lem:maxintbound}, 
	$
		\prob^u(\mathfs{K}_u)>1-u^{\bet{bet:eps}}
	$,
	and by Lemma~\ref{lemma:numberbound},
	$  
		\prob^u \left( \mathfs{N}_u \right) \geq 1 -  c u^{\varepsilon u^{-\bet{bet:eps}}/2}
	$, and hence 
	\begin{equation} \label{eq:event_1}
		\prob^u \left( \mathfs{K}_u \cap \mathfs{N}_u \right) \geq 1 - u^{\bet{bet:eps}} - c u^{\varepsilon u^{- \bet{bet:eps}}/2}\,.
	\end{equation}
	Since the right hand side in \eqref{eq:event_1} goes to $1$ as $u\to 0$, we can assume throughout the rest of the proof that the event $ \mathfs{K}_u \cap \mathfs{N}_u $ occurs.

  Partition the $L_u$-ring $B(2L_u) \setminus B(L_u) $ to $u^{-\varepsilon}$ annuli of width $u^{\varepsilon}L_u$.  
 	Note that the upper bound on $\varepsilon > 0$ guarantees that $\lim_{u\to 0}u^{\varepsilon}L_u =\infty$. We denote these annuli by $A_u^j$, for $  j = 1, \ldots ,  u^{-\varepsilon}$ (see Figure \ref{fig:annulipartition}).
	By the event $\mathfs{K}_u$ and the  pigeonhole principle, for each one of the clusters $\mathcal{C}_i$ there must be at least one annulus $A^{j (i)}_u$ of width $u^{\varepsilon}L_u$ satisfying the following property:
	\begin{enumerate}[label=(\Roman*)]
		\setcounter{enumi}{1}
	 	\item \label{cond:C1} Every path $w \in \supp \vec{\RI}^u_{B(2L_u)}$ in the cluster $\mathcal{C}_i$ avoids intersections within $A^{j (i)}_u$  with other paths.
	\end{enumerate}
	We have obtained that, with high probability, there are no intersections between paths in $\mathcal{C}_{i}$ while they traverse the annulus $A_u^{j(i)}$. 
	To obtain Property~\ref{aim}, our next goal is to (lower) bound the chemical distance of these paths while crossing $A_u^{j(i)}$. Note that we need to account for different returns of a single path $w \in \mathcal{C}_i$ to the same annulus, and the self-intersections of the path $w$ during each return.
	To control these returns as well as self-intersections, we further partition $A^{j (i)}_u$ into 3 sub-annuli, each of width $1/3\cdot u^{\varepsilon}L_u$. We refer to them as the  inner, middle and outer  annuli of $A^{j (i)}_u$ (see Figure \ref{fig:annulipartition}).  A connected sub-path of $w \in \mathcal{C}_i $ is called a \emph{crossing} of $\mathcal{C}_i$ if it stays inside the sub-annulus, and it starts and ends in the two disjoint boundaries of an annulus or sub-annulus.
	Our last goal is to show the following property.
	\begin{enumerate}[label=(\Roman*)]
	\setcounter{enumi}{2}
	 \item \label{cond:C2}   For every $i$ and every $w \in \mathcal{C}_i$, all its crossings of the middle sub-annulus of $A_u^{j(i)}$ are disjoint and have sufficiently large chemical distance.
	\end{enumerate}
	Every path $w \in \supp \vec{\RI}^{u}_{B (2L_u)} $ crossing the $L_u$-ring belongs to a cluster, then Property~\ref{cond:C1} and Property~\ref{cond:C2} imply Property~\ref{aim}.

	\begin{figure}[H] 
	\begin{tikzpicture}[scale=0.7]
		\coordinate (O) at (0,0);
		
		\draw[ultra thick] (0, 4) -- (0, 0);
		\draw[ultra thick] (12, 4) -- (12, 0);

		\draw[thick, dashed, blue] (4, 4) -- (4, 0);
		\draw[thick, dashed, blue] (8, 4) -- (8, 0);

				\draw [black,thick] plot [smooth, tension=1.5] 
				 coordinates { 
					(-1  , 3.8)   
					(0  , 3.5) 
					( 2.2  , 3.4)
					( 2.6 ,2.3)
					( 2.6  , 2.7)
					( 2.3  , 3.4)
					( 4 , 3.8) 
					( 5 , 2.8) 
					( 8 , 3.8)
					( 9 , 4.4)
					( 8 , 3.8)
					( 12 , 3.4)
					( 12 , 3.4)
					( 3,  1.2 )
					( 2  , 1.5)
					( 8.5  , .8)   
					( 9  , 2)
					( 9.5  , .8)
					( 10  , .7)
					( 13  , .5)
				};

		\node[blue] at (13, 0) (c)    { $ A^{j(i)}_u $};

	\end{tikzpicture}
	\caption{ This path has three crossings of the middle sub-annulus.}
	\end{figure}
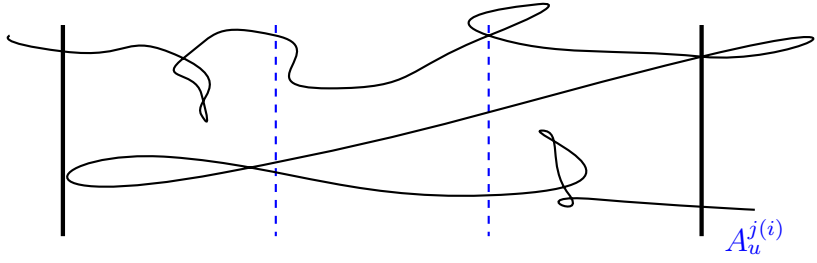
	
{In the following analysis, to avoid conditioning random walks, we will take a union bound on crossing any annulus instead of considering the special annulus $A_u^{j(i)}$. Let $X\in\supp \vec{\RI}^u_{B(2L_u)}$,  and let $\mathfs{Y}^j_X$ be the number of crossings of the inner, middle and outer sub-annuli of $A_u^{j}$ traversed by $X$. Let $  \newbet \label{1}, \newbet \label{hell}  > 0 $ such that $\bet{1}-\bet{hell}-2\varepsilon>0$ (whose value will be specified later).
Then 
\begin{equation}\label{eq:numberofcrossingsbound_one_path}
		\prob^u \Big(\mathfs{Y}^j_X \geq u^{-\bet{1}}\Big)\preceq u^{-\bet{1}}e^{-\expo{e:RWescape}u^{-\bet{hell}}}+e^{-\expo{e:rangeboundann}u^\frac{\bet{hell}-\bet{1}+2\varepsilon}{2}}\,.
\end{equation}
Indeed splitting the event according to the existence of a crossing of length $<u^{\bet{hell}+2\varepsilon}L_u^2/9$ among the first $u^{-\bet{1}}$ crossings traversed by the path $X$ we get: On the one hand, the probability that such a crossing indeed exists, by a union bound and Proposition \ref{prop: RW escape}, is bounded by 
	$$
	c \cdot u^{-\bet{1}}e^{-\expo{e:RWescape}u^{-\bet{hell}}}
	.$$
	On the other hand, if all crossings are of length at least $u^{\bet{hell}+{2\varepsilon}}L_u^2/9$, then the total amount of time spent by the random walk inside the box $B(2L_u)$ is at least $u^{-\bet{1}+\bet{hell}+{2\varepsilon}}L_u^2/9$. By Lemma \ref{lem:rangeboundann} and the fact that $-\bet{1}+\bet{hell}+2\varepsilon<0$ this event happens with a probability bounded by $c' \cdot e^{-\expo{e:rangeboundann}u^\frac{\bet{hell}-\bet{1}+2\varepsilon}{2}}$.
	Thus the bound in~\eqref{eq:numberofcrossingsbound_one_path} follows. 
	
	Now, let 
\[ \mathfs{Y} = \sum_{X\in \supp \vec{\RI}^u_{B(2L_u)}}\sum_{j=1}^{u^{-\varepsilon}} \mathfs{Y}^j_X \] be the total number of possible crossings. By union bound on the number of paths (on the event $\mathfs{N}_u = \{ N_{B(2 L_u)} < u^{-\bet{bet:eps}} \}$) and annuli, we obtain that
\begin{equation}\label{eq:numberofcrossingsbound}
		\prob^u \Big(\mathfs{Y} \leq u^{-\bet{1} - \bet{bet:eps}-\varepsilon}\Big) \geq 1 - c \cdot u^{-\varepsilon- \bet{bet:eps}}\left(u^{-\bet{1}}e^{-\expo{e:RWescape}u^{-\bet{hell}}}+e^{-\expo{e:rangeboundann}u^\frac{\bet{hell}-\bet{1}+2\varepsilon}{2}}\right)\,.
	\end{equation}

	If we certify, with high probability, that for every crossing of any annulus $A_u^{j}$ there are global cut-points of the crossings in the inner and outer sub-annuli, then different crossings of the middle sub-annulus are disjoint. Additionally, if we prove that all crossings of the middle sub-annulus have enough global cut-points, we obtain the desired lower bound on the chemical distance. By \eqref{eq:numberofcrossingsbound}, the number of such crossings is bounded by $u^{-\bet{1} - \bet{bet:eps}-\varepsilon}$ with high probability. 

	Let $ \newbet \label{4}  > 0$, whose value is chosen below.
	Let $w$ be a crossing of a sub-annulus of $A^{j}_u$, let $\sigma$ be the crossing time, let $x$ be its starting point, and let $Z_w$ be the number of global cut-points in $w[0,\sigma]$. By Proposition~\ref{prop: RW escape} and Proposition~\ref{prop:cutdensity}, 
	\begin{equation} \label{eq:cut_density} 
	\begin{split}
		\prw_x \left(Z_w< \cnt{c:cutdensity} u^{\bet{4} + 2\varepsilon}L_u^2\right) 
		&\le \prw_ x\left(\sigma<u^{\bet{4} + 2\varepsilon}L_u^2\right)+ \prw_x \left(Z_w<  \cnt{c:cutdensity} u^{\bet{4}+ 2\varepsilon}L_u^2,~\sigma>u^{\bet{4}+ 2\varepsilon}L_u^2\right) \\ 
		&\preceq  \log(1/u)^{\expo{dense}} \left(u^{\bet{4}+ 2\varepsilon}L_u^2\right)^{\frac{4-d}{2}}.
	\end{split}
	\end{equation}

	Now, since $\bet{1}>0$, we assume the event above, and consider all $ \mathfs{Y}$ sub-annuli crossings $w_i$ (each one starting at $x_i$).
	By~\eqref{eq:cut_density}, all these crossings have a positive density of global cut-points with probability bounded from below by:
	\begin{equation} \label{eq:allsteps}
	\begin{split}
	 \prob^{u} \left(\bigcap_{i = 1}^\mathfs{Y} \left \{  Z_{w_i} \ge \cnt{c:cutdensity} u^{\bet{4}+ 2\varepsilon}L_u^2\right\}\right)
		&	\ge 1-\sum_{i=1}^{u^{-\bet{1} - \bet{bet:eps}-\varepsilon}} \prw_{x_i} \left(Z_{w_i}< \cnt{c:cutdensity}u^{\bet{4}+ 2\varepsilon}L_u^2 \right) \\
		& \ge 1- c \cdot u^{-\bet{1}-\bet{bet:eps}-\varepsilon}  \log(1/u)^{\expo{dense}} \left(u^{\bet{4}+ 2\varepsilon}L_u^2\right)^{\frac{4-d}{2}} \\
		& = 1 - 
		c \cdot \log(1/u)^{\expo{dense}} \cdot u^{-\bet{1}-\bet{bet:eps}-\varepsilon + \left(\frac{4-d}{2}  \right) \left(\bet{4} + 8 \varepsilon - 1 \right)}  
	\end{split}
	\end{equation}
	Substituting the value of $\bet{bet:eps}$ in the polynomial factor, we get that the exponent of $u$ above simplifies to 
	\[
	\begin{split}
	&-\bet{1}-\bet{bet:eps}-\varepsilon + \left(\frac{4-d}{2}  \right) \left(\bet{4} + 8 \varepsilon - 1 \right)\\
	=&	
		-\bet{1} - \frac{d-4}{2} + \varepsilon (4d - 10) + \left(\frac{4-d}{2}  \right) \left(\bet{4} + 8 \varepsilon - 1 \right)  \\
	=& 
		-\bet{1}  + \varepsilon (4d - 10) + \left(\frac{4-d}{2}  \right) \left(\bet{4} + 8 \varepsilon  \right)\\
	=& 
		-\bet{1}  + 6\varepsilon - \left(\frac{d-4}{2}\right) \bet{4} 
	\end{split}
	\]
	We choose $\bet{1}=3\varepsilon$, $\bet{hell}=\frac{\varepsilon}{2}$ and $ 0 < \bet{4} < \frac{2}{d-4} \cdot \varepsilon $, and then the equation above is positive and $\bet{1}-\bet{hell}-2\varepsilon>0$. 
	This bounds the exponent in~\eqref{eq:allsteps} and we obtain some $\newbet \label{5}=\bet{5}(\varepsilon)>0$ such that
	\begin{equation} \label{eq:event_3}
	\prob^{u} \left(\bigcap_{i=1}^\mathfs{Y}\{Z_{w_i}\ge   \cnt{c:cutdensity} u^{\bet{4} + 2\varepsilon}L_u^2\}\right)\ge 1-u^{\bet{5}}.
	\end{equation}

	Since $\bet{4}<\varepsilon$, we let $ 0<C_{\const{prop:annulus}{\ell}}<\cnt{c:cutdensity}$ in~\eqref{eq:event_3}. With this choice of constant, we conclude the proposition by consider the intersection of the events in~\eqref{eq:event_1},~\eqref{eq:numberofcrossingsbound} and~\eqref{eq:event_3}.
}\end{proof}

\section{Lower bound on the chemical distance in random interlacements} \label{sec: lower bound}

In this section we prove Theorem~\ref{thm:main_lower}. Due to monotonicity, we can assume without loss of generality that $\varepsilon\in (0,1/24)$. Similarly to the proof of Theorem~\ref{thm:main_upper}, it is enough to work with $x = e_1 \coloneqq (1, 0 , \ldots , 0)$. To see this consider any $x=(x_1,x_2,\ldots,x_d)\in\mathbb{Z}^d$, and without loss of generality assume that $|x_1|\ge |x|_2/\sqrt{d}$ (note that there must be a direction for which this holds). Denote $\bar{x}=(x_1,-x_2,\ldots,-x_d)$ to be its reflection in the $e_1$ direction. Then by the reflection symmetry of the model, $2\frac{|x|_2}{\sqrt{d}}\rho_u(e_1)\le \rho_u(x)+\rho_u(\bar{x})=2\rho_u(x)$, yielding the desired lower bound for the direction $x$.

As a result Theorem~\ref{thm:main_lower} reduces to the following result.

\begin{prop} \label{prop: lower bound}
	Consider random interlacements on $\mathbb{Z}^d$ with $d \geq 5$. 
	There exists a constant  $   \Cnt{c:low}  =  \Cnt{c:low} (d)\in (0, \infty) $ such that the following holds: For every $\varepsilon > 0$, there exists $ \newuBound \label{u:propLowerBound} = \uBound{u:propLowerBound}(\varepsilon) > 0$   such that for any $u \in (0, \uBound{u:propLowerBound})$
	\begin{equation} \label{eq: lower bound}
		\rho_u (e_1) \geq \Cnt{c:low}   u^{-\frac{1}{2}+\varepsilon}.
	\end{equation}
\end{prop}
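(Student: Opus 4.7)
The plan is to promote the local lower bound in Proposition~\ref{prop:annulus} to a linear-in-$n$ estimate for $d_u([0]_u,[ne_1]_u)$ via a multi-scale renormalization with sprinkling. By monotonicity of $\rho_u$ in the exponent we may assume $\varepsilon$ small; specifically, set $\tilde\varepsilon:=\varepsilon/4\in(0,1/12)$ so that Proposition~\ref{prop:annulus} applies at this parameter, and write $L_u:=u^{-1/2+2\tilde\varepsilon}$. By rotational symmetry of $\RI^u$ it suffices to treat $x=e_1$.

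For each $z\in\mathbb{Z}^d$, declare $B_z(L_u)$ to be \emph{$L_u$-good} if $\hat d_u(\partial B_z(L_u),\partial B_z(2L_u))\geq C_{\const{prop:annulus}{\ell}}\,u^{2\tilde\varepsilon}L_u^2$. This is a decreasing event depending only on the traversed edges in $B_z(2L_u)$; by translation invariance of the random interlacement measure and Proposition~\ref{prop:annulus}, $\prob^u(B_z(L_u)\text{ is good})\geq 1-q(u)$ with $q(u)\to 0$ as $u\to 0$. Fix a large integer $\ell$, set $L_k:=\ell^k L_u$, and cascade: $B_z(L_{k+1})$ is \emph{$(k+1)$-good} iff at most one of its $\ell^d$ sub-boxes at scale $L_k$ is $k$-bad (level $0$ being the seed event). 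Writing $p_k(u)$ for the probability that a fixed level-$k$ box is bad, Theorem~\ref{thm:RI_decorrelation} applied at sprinkling parameter $\varepsilon_k:=2^{-k}$ yields a Peierls-type recursion of the form
\[
p_{k+1}(u)\leq \binom{\ell^d}{2}\,p_k\bigl((1+\varepsilon_k)u\bigr)^2 + C\ell^{kd}\exp\bigl(-c\,\varepsilon_k^{2}\,u\,L_k^{d-2}\bigr),
\]
whose decorrelation error is super-polynomially small in $\ell^k$ since $uL_u^{d-2}$ is a positive power of $1/u$ for $d\geq 5$. Iterating from $p_0(u)\leq q(u)$, with $\prod_k(1+\varepsilon_k)$ uniformly bounded, gives $p_k(u)\leq e^{-c\,2^k}$ for every $k\geq 0$, provided $u$ is small enough.

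Choose $k^\star=k^\star(n)$ with $L_{k^\star}\asymp\log n$, and let $\mathcal{E}_n$ be the event that every level-$k^\star$ box intersecting $B(0,Cn)$ is $k^\star$-good, where $C$ is the constant from Theorem~\ref{thm:main_upper} (so any shortest chemical path $\pi$ from $[0]_u$ to $[ne_1]_u$ stays in $B(0,Cn)$ with probability tending to $1$). By the cascade above and a union bound, $\prob^u(\mathcal{E}_n)\to 1$. On $\mathcal{E}_n$, a downward induction over the scale hierarchy shows that any nearest-neighbour path in $\mathcal{I}^u$ crossing a $k$-good box between two opposite faces visits at least $c\ell^k$ many $L_u$-good sub-boxes with pairwise disjoint double-radius annuli, since at each level the density hypothesis (at most one bad sub-box) forbids $\pi$ from bypassing a macroscopic fraction of the good sub-boxes. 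Applied to $\pi$, this yields at least $c\,n/L_u$ disjoint $L_u$-good boxes crossed by $\pi$, each contributing at least $C_{\const{prop:annulus}{\ell}}\,u^{2\tilde\varepsilon}L_u^2$ edges; summing,
\[
d_u([0]_u,[ne_1]_u)\geq \frac{c\,n}{L_u}\cdot C_{\const{prop:annulus}{\ell}}\,u^{2\tilde\varepsilon}L_u^2 = c'\,n\,u^{-1/2+4\tilde\varepsilon} = c'\,n\,u^{-1/2+\varepsilon}.
\]
Dividing by $n$ and passing to the almost-sure limit produces the required bound on $\rho_u(e_1)$.

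The main obstacle is the topological step: converting the purely density-type statement produced by the cascade into a deterministic geometric count of good $L_u$-boxes visited by every connected path from $[0]_u$ to $[ne_1]_u$ in $\mathcal{I}^u$, with \emph{disjoint} associated annuli. Because the argument does not rely on any connectivity assumption between good boxes (as emphasized in the strategy subsection), the inductive topological argument must proceed solely from the single-isolated-bad-sub-box condition at each scale, combined with the rigidity of each good crossing --- a full $u^{2\tilde\varepsilon}L_u^2$ edges, not merely an $\ell^2$ displacement. A secondary technical point is keeping the sprinkling in the cascade summable, so that the final bound applies at the original intensity $u$ rather than at a sprinkled version of it.
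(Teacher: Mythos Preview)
Your overall strategy coincides with the paper's: use Proposition~\ref{prop:annulus} as a seed event and cascade via a multi-scale renormalization with decoupling. Two points in your implementation, however, do not close as written.

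First, the recursion $p_{k+1}\le\binom{\ell^d}{2}\,p_k((1+\varepsilon_k)u)^2+\text{error}$ requires Theorem~\ref{thm:RI_decorrelation} for \emph{every} pair of distinct sub-boxes, but that theorem needs $|x_1-x_2|>2L$ where $L$ is the radius of the supporting box. Since the seed event $\overline G_{x,0}$ lives on $B_x(2L_0)$, adjacent level-$0$ sub-boxes have overlapping supports and cannot be decoupled; the same obstruction propagates to higher levels. The paper (Definition~\ref{def:k-good}) instead declares a level-$k$ box bad only when it contains two bad sub-boxes at distance exceeding $r_{k-1}L_{k-1}$; this buys the separation needed for the decoupling inequality, at the price that bad sub-boxes may form a small cluster rather than being a single isolated box. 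The topological count (Lemma~\ref{lemma:box_intersection}) then loses the factor $\prod_k(1-r_k/l_k)$, which the paper keeps above $1/2$ via~\eqref{eq:boundVolume}.

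Second, with a fixed large ratio $\ell$ and $L_{k^\star}\asymp\log n$, you get $k^\star\asymp\log_\ell\log n$ and hence $2^{k^\star}\asymp(\log n)^{\log 2/\log\ell}$, a sublinear power of $\log n$; the bound $p_{k^\star}\le e^{-c(\log n)^\alpha}$ with $\alpha<1$ cannot beat a union over $\asymp n^d$ boxes (and a geodesic is only confined to $B(0,Cnu^{-1/2})$, not $B(0,Cn)$, by Theorem~\ref{thm:main_upper}). The paper sidesteps this by using super-exponentially growing ratios $l_k=l_0\cdot 4^{k^{\theta_{\text{sc}}}}$ and taking the top scale $L_k$ of order $nL_u$, so that only the $3^d$ boxes in the fattened event~\eqref{eq:fatgoodbox} are needed and no union over a growing family occurs. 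Since $\rho_u(e_1)$ is an almost-sure constant, positive probability of the lower bound on $d_u$ along a subsequence already forces the inequality on $\rho_u$.
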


Proposition~\ref{prop: lower bound} follows from the following proposition applying it with $\varepsilon/4$ instead of $\varepsilon$ from the previous Proposition.

\begin{prop} \label{prop: asymp-bound}
	There exists a constant $ \Cnt{c:low}  =  \Cnt{c:low}  (d)\in (0, \infty)$ such that the following holds. 
	For every  $ \varepsilon \in (0, 1/24)$, there exist $\uBound{u:propLowerBound}  = \uBound{u:propLowerBound} (\varepsilon) > 0$,  $ \newCnt \label{c:asymp1}  =  \Cnt{c:asymp1} (\uBound{u:propLowerBound})\in (0, \infty) $, $ \newCnt \label{c:asymp2}  =  \Cnt{c:asymp2} (\uBound{u:propLowerBound})\in (0, \infty) $, $ \newexpo \label{e:asymp} =  \expo{e:asymp} \in (0,1)  $ and $N = N(\uBound{u:propLowerBound})\in\mathbb{N}$ so that 
	\begin{equation} \label{eq:asymp-bound}
		\prob^u \Big( d_u \left([0]_u, \left[ \left( n L_u \right) \cdot e_1 \right]_u \right) \geq   \Cnt{c:low} u^{-\frac{1}{2}+4\varepsilon} (n L_u)\Big) >1 - \Cnt{c:asymp1} e^{ - \Cnt{c:asymp2} e^{(\log n)^{\expo{e:asymp}}} }
	\end{equation}
	for every $u\in (0,\uBound{u:propLowerBound})$ and $n\geq N$, where 
$	L_u = u^{-\frac{1}{2}+4\varepsilon}$. 
\end{prop}

Our strategy follows a multi-scale renormalization argument. 
The first step of the argument is based on the results in Section~\ref{sec:local_lower_bound} and in particular on Proposition~\ref{prop:annulus}, which gives a  lower bound on the chemical distance between a box of radius $L_u$ and a box with radius $2 L_u$.
Such event will correspond to level $0$ of the renormalization introduced below.
Subsection~\ref{subsec: multiscale-renormalization} below presents the framework of the multi-scale renormalization, after which we prove Proposition~\ref{prop: asymp-bound} in Subsection \ref{subsec:proof_of_proposition}.


\subsection{Multi-scale renormalization} \label{subsec: multiscale-renormalization}

The multi-scale renormalization setup we used is based on a similar construction in~\cite[Section IV]{DrewitzRathSapozhnikov}.

Consider the vector $\Theta =  ( \theta_{\text{sc}},  L_0, l_0, r_0)\in (0,\infty)\times\mathbb{N}^3$ to which we refer as \emph{renormalization scales}.
Let $\varepsilon \in (0,  1/24)$ (we fix this value for the rest of the section), and let $\newuBound \label{u:fixedEps} \coloneqq  \uBound{u:propAnnulus} (\varepsilon) > 0 $ be the corresponding upper bound on the intensity from Proposition~\ref{prop:annulus}.
Let $u \in (0, \uBound{u:fixedEps})$ and set $L_0\in \Theta$ to be 
\begin{equation} \label{eq:L0_scale}
		L_0  \coloneqq L_{u} = u^{-\frac{1}{2}+4\varepsilon}.
\end{equation}
For the other scales in $\Theta$:  $\theta_{\text{sc}}$, $l_0$, and $r_0$ are positive integers. We assume that $l_0$, $r_0$ are large and satisfy $ r_0 / l_0 < 1/4 $.
Then we define increasing sequences of positive integers $ (l_k)_{k \geq 0}$, $(r_k)_{k \geq 0}$, and $ (L_k)_{k \geq 0} $ by
\begin{equation} \label{eq:renormalization-seq}
	l_k = l_0 \cdot 4^{k^{\theta_{\text{sc}}}} , \quad r_k = r_0 \cdot 2^{k^{\theta_{\text{sc}}}}, \quad L_k = l_{k-1} \cdot L_{k-1}, \quad k \geq 1.
\end{equation}
Note that the assumptions above imply $ l_k > r_k $, for all $k \geq 1$.

The sequences $(l_k)_{k\geq 0}$, $(r_k)_{k\geq 0}$ and $(L_k)_{k\geq 0}$ define exponentially growing scales, which we will use below to define the size of boxes in the  multi-scale renormalization.
The integer $ L_k  $ sets the size of renormalized boxes at level $k$,  while $r_{k-1} L_{k-1}$ bounds the side length of the regions containing ``bad boxes'' at level $k-1$ (see Definition~\ref{def:multiscale_0-good} below).
Next, we begin with the construction of the renormalization and define the aforementioned boxes and events. Then~Proposition~\ref{prop:multi_renormalization} gives values for $\theta_{\text{sc}}$,  $l_0$ and $r_0$ that are useful in our setting.

\begin{figure}[H]
\begin{tikzpicture}
	\draw[step=.05cm,gray,ultra thin] (0,0) grid (5.4,5.4);

	\draw [fill=lightgreen,fill opacity=0.8]  (3.60,4.95) rectangle (4.50,5.40);			
	\draw [fill=lightgreen,fill opacity=0.8]  (3.15,4.50) rectangle (4.05,4.95);		  
	\draw [fill=lightgreen,fill opacity=0.8]  (4.50,4.50) rectangle (4.95,4.95);			  
	\draw [fill=lightgreen,fill opacity=0.8]  (4.05,4.05) rectangle (4.50,4.50); 			


			\draw [fill=blue!60,fill opacity=0.8,draw=none] (0.25,5.15) rectangle (0.35,5.25); 

			\draw [fill=blue!60,fill opacity=0.8,draw=none] (0.50,5.05) rectangle (0.55,5.10); 
			\draw [fill=blue!60,fill opacity=0.8,draw=none] (0.55,5.05) rectangle (0.60,5.10);
			\draw [fill=blue!60,fill opacity=0.8,draw=none] (0.55,5.00) rectangle (0.60,5.05);

			\draw [fill=blue!60,fill opacity=0.8,draw=none] (0.95,5.25) rectangle (1.05,5.35); 

			\draw [fill=blue!60,fill opacity=0.8,draw=none] (1.35,5.30) rectangle (1.40,5.35);
			\draw [fill=blue!60,fill opacity=0.8,draw=none] (1.45,5.30) rectangle (1.50,5.35);
			\draw [fill=blue!60,fill opacity=0.8,draw=none] (1.35,5.25) rectangle (1.40,5.30);
			\draw [fill=blue!60,fill opacity=0.8,draw=none] (1.40,5.25) rectangle (1.45,5.30);

			\draw [fill=blue!60,fill opacity=0.8,draw=none] (1.90,5.15) rectangle (2.00,5.25);

			\draw [fill=blue!60,fill opacity=0.8,draw=none] (2.40,5.15) rectangle (2.50,5.25); 

			\draw [fill=blue!60,fill opacity=0.8,draw=none] (2.95,5.15) rectangle (3.05,5.25); 
			\draw [fill=blue!60,fill opacity=0.8,draw=none] (3.05,5.15) rectangle (3.10,5.25); 

			\draw [fill=blue!60,fill opacity=0.8,draw=none] (3.30,5.00) rectangle (3.45,5.05); 
			\draw [fill=blue!60,fill opacity=0.8,draw=none] (3.35,4.95) rectangle (3.40,5.00);

			\draw [fill=blue!60,fill opacity=0.8,draw=none] (4.60,5.10) rectangle (4.70,5.20); 

			\draw [fill=blue!60,fill opacity=0.8,draw=none] (5.15,5.35) rectangle (5.20,5.40); 
			\draw [fill=blue!60,fill opacity=0.8,draw=none] (5.10,5.30) rectangle (5.15,5.35);	
	

			\draw [fill=blue!60,fill opacity=0.8,draw=none] (0.35,4.90) rectangle (0.45,4.95);  
			\draw [fill=blue!60,fill opacity=0.8,draw=none] (0.30,4.85) rectangle (0.35,4.90);

			\draw [fill=blue!60,fill opacity=0.8,draw=none] (0.80,4.50) rectangle (0.90,4.60);
	
			\draw [fill=blue!60,fill opacity=0.8,draw=none] (1.30,4.60) rectangle (1.35,4.70);

			\draw [fill=blue!60,fill opacity=0.8,draw=none] (1.55,4.65) rectangle (1.65,4.75);  

			\draw [fill=blue!60,fill opacity=0.8,draw=none] (1.80,4.50) rectangle (1.90,4.60);

			\draw [fill=blue!60,fill opacity=0.8,draw=none] (2.25,4.50) rectangle (2.35,4.60);  

			\draw [fill=blue!60,fill opacity=0.8,draw=none] (3.00,4.60) rectangle (3.10,4.70);

			\draw [fill=blue!60,fill opacity=0.8,draw=none] (4.25,4.75) rectangle (4.35,4.85);

			\draw [fill=blue!60,fill opacity=0.8,draw=none] (5.15,4.80) rectangle (5.20,4.90); 


			\draw [fill=blue!60,fill opacity=0.8,draw=none] (0.40,4.40) rectangle (0.45,4.45);
			\draw [fill=blue!60,fill opacity=0.8,draw=none] (0.35,4.35) rectangle (0.45,4.40);

			\draw [fill=blue!60,fill opacity=0.8,draw=none] (0.75,4.45) rectangle (0.80,4.50); 
			\draw [fill=blue!60,fill opacity=0.8,draw=none] (0.85,4.45) rectangle (0.90,4.50);
			\draw [fill=blue!60,fill opacity=0.8,draw=none] (0.75,4.40) rectangle (0.85,4.45);

			\draw [fill=blue!60,fill opacity=0.8,draw=none] (1.00,4.35) rectangle (1.10,4.40); 
			\draw [fill=blue!60,fill opacity=0.8,draw=none] (0.95,4.30) rectangle (1.00,4.35);

			\draw [fill=blue!60,fill opacity=0.8,draw=none] (1.75,4.05) rectangle (1.80,4.15);

			\draw [fill=blue!60,fill opacity=0.8,draw=none] (1.80,4.45) rectangle (1.90,4.50); 

			\draw [fill=blue!60,fill opacity=0.8,draw=none] (2.25,4.40) rectangle (2.35,4.50); 

			\draw [fill=blue!60,fill opacity=0.8,draw=none] (2.85,4.35) rectangle (2.95,4.40); 
			\draw [fill=blue!60,fill opacity=0.8,draw=none] (2.85,4.25) rectangle (2.95,4.30);

			\draw [fill=blue!60,fill opacity=0.8,draw=none] (3.35,4.30) rectangle (3.45,4.20); 

			\draw [fill=blue!60,fill opacity=0.8,draw=none] (4.00,4.40) rectangle (4.05,4.45);
			\draw [fill=blue!60,fill opacity=0.8,draw=none] (3.95,4.35) rectangle (4.00,4.40);
			\draw [fill=blue!60,fill opacity=0.8,draw=none] (4.00,4.35) rectangle (4.05,4.40);

			\draw [fill=blue!60,fill opacity=0.8,draw=none] (4.80,4.25) rectangle (4.90,4.30); 
			\draw [fill=blue!60,fill opacity=0.8,draw=none] (4.85,4.30) rectangle (4.90,4.35);

			\draw [fill=blue!60,fill opacity=0.8,draw=none] (5.10,4.20) rectangle (5.25,4.25); 
			\draw [fill=blue!60,fill opacity=0.8,draw=none] (5.20,4.15) rectangle (5.25,4.20);

			\draw [fill=blue!60,fill opacity=0.8,draw=none] (0.40,3.65) rectangle (0.45,3.70); 
			\draw [fill=blue!60,fill opacity=0.8,draw=none] (0.30,3.60) rectangle (0.40,3.70);

			\draw [fill=blue!60,fill opacity=0.8,draw=none] (0.45,3.65) rectangle (0.50,3.75); 
			\draw [fill=blue!60,fill opacity=0.8,draw=none] (0.50,3.70) rectangle (0.55,3.75);

			\draw [fill=blue!60,fill opacity=0.8,draw=none] (1.05,3.70) rectangle (1.15,3.80);

			\draw [fill=blue!60,fill opacity=0.8,draw=none] (1.45,3.70) rectangle (1.55,3.80); 

			\draw [fill=blue!60,fill opacity=0.8,draw=none] (2.00,3.65) rectangle (2.10,3.75);

			\draw [fill=blue!60,fill opacity=0.8,draw=none] (2.35,3.90) rectangle (2.45,4.00); 

			\draw [fill=blue!60,fill opacity=0.8,draw=none] (2.95,3.60) rectangle (3.05,3.70);

			\draw [fill=blue!60,fill opacity=0.8,draw=none] (3.20,3.90) rectangle (3.30,3.95);

			\draw [fill=blue!60,fill opacity=0.8,draw=none] (3.90,3.80) rectangle (4.00,3.90); 

			\draw [fill=blue!60,fill opacity=0.8,draw=none] (4.30,3.65) rectangle (4.50,3.70); 

			\draw [fill=blue!60,fill opacity=0.8,draw=none] (4.50,3.60) rectangle (4.60,3.65); 
			\draw [fill=blue!60,fill opacity=0.8,draw=none] (4.50,3.65) rectangle (4.55,3.70);

			\draw [fill=blue!60,fill opacity=0.8,draw=none] (5.15,3.75) rectangle (5.20,3.90); 
	
			\draw [fill=blue!60,fill opacity=0.8,draw=none] (0.30,3.55) rectangle (0.35,3.50); 
			\draw [fill=blue!60,fill opacity=0.8,draw=none] (0.35,3.45) rectangle (0.45,3.55);

			\draw [fill=blue!60,fill opacity=0.8,draw=none] (0.40,3.55) rectangle (0.50,3.60); 

			\draw [fill=blue!60,fill opacity=0.8,draw=none] (0.90,3.50) rectangle (1.00,3.60); 

			\draw [fill=blue!60,fill opacity=0.8,draw=none] (1.65,3.25) rectangle (1.70,3.30); 
			\draw [fill=blue!60,fill opacity=0.8,draw=none] (1.60,3.15) rectangle (1.70,3.25); 

			\draw [fill=blue!60,fill opacity=0.8,draw=none] (1.85,3.20) rectangle (1.90,3.25); 
			\draw [fill=blue!60,fill opacity=0.8,draw=none] (1.90,3.15) rectangle (2.00,3.25);

			\draw [fill=blue!60,fill opacity=0.8,draw=none] (2.55,3.15) rectangle (2.65,3.25); 

			\draw [fill=blue!60,fill opacity=0.8,draw=none] (2.85,3.50) rectangle (2.95,3.60); 

			\draw [fill=blue!60,fill opacity=0.8,draw=none] (3.45,3.40) rectangle (3.55,3.50);

			\draw [fill=blue!60,fill opacity=0.8,draw=none] (3.65,3.55) rectangle (3.75,3.60); 

			\draw [fill=blue!60,fill opacity=0.8,draw=none] (4.30,3.15) rectangle (4.40,3.25); 

			\draw [fill=blue!60,fill opacity=0.8,draw=none] (4.50,3.45) rectangle (4.60,3.55); 

			\draw [fill=blue!60,fill opacity=0.8,draw=none] (5.15,3.55) rectangle (5.25,3.60); 
			\draw [fill=blue!60,fill opacity=0.8,draw=none] (5.20,3.50) rectangle (5.30,3.55);

			\draw [fill=blue!60,fill opacity=0.8,draw=none] (0.15,2.75) rectangle (0.25,2.85); 

			\draw [fill=blue!60,fill opacity=0.8,draw=none] (0.60,2.95) rectangle (0.70,3.05); 

			\draw [fill=blue!60,fill opacity=0.8,draw=none] (1.00,3.05) rectangle (1.10,3.10); 
			\draw [fill=blue!60,fill opacity=0.8,draw=none] (1.00,3.00) rectangle (1.05,3.05);

			\draw [fill=blue!60,fill opacity=0.8,draw=none] (1.65,3.05) rectangle (1.75,3.15); 
			\draw [fill=blue!60,fill opacity=0.8,draw=none] (1.75,3.05) rectangle (1.80,3.10); 

			\draw [fill=blue!60,fill opacity=0.8,draw=none] (2.10,3.00) rectangle (2.20,3.05); 
			\draw [fill=blue!60,fill opacity=0.8,draw=none] (2.05,2.95) rectangle (2.10,3.00);

			\draw [fill=blue!60,fill opacity=0.8,draw=none] (2.60,2.70) rectangle (2.70,2.80);

			\draw [fill=blue!60,fill opacity=0.8,draw=none] (3.10,3.10) rectangle (3.15,3.15);

			\draw [fill=blue!60,fill opacity=0.8,draw=none] (3.40,2.95) rectangle (3.50,3.00); 
			\draw [fill=blue!60,fill opacity=0.8,draw=none] (3.40,2.90) rectangle (3.45,2.95);

			\draw [fill=blue!60,fill opacity=0.8,draw=none] (3.60,2.70) rectangle (3.65,2.80); 
			\draw [fill=blue!60,fill opacity=0.8,draw=none] (3.70,2.70) rectangle (3.75,2.80);

			\draw [fill=blue!60,fill opacity=0.8,draw=none] (4.20,3.00) rectangle (4.30,3.10);

			\draw [fill=blue!60,fill opacity=0.8,draw=none] (4.60,2.70) rectangle (4.70,2.80);

			\draw [fill=blue!60,fill opacity=0.8,draw=none] (4.95,2.95) rectangle (5.05,3.05);

			\draw [fill=blue!60,fill opacity=0.8,draw=none] (0.30,2.25) rectangle (0.35,2.35); 
			\draw [fill=blue!60,fill opacity=0.8,draw=none] (0.40,2.30) rectangle (0.45,2.35);

			\draw [fill=blue!60,fill opacity=0.8,draw=none] (0.45,2.30) rectangle (0.55,2.40); 

			\draw [fill=blue!60,fill opacity=0.8,draw=none] (1.25,2.50) rectangle (1.35,2.65); 
		
			\draw [fill=blue!60,fill opacity=0.8,draw=none] (1.35,2.45) rectangle (1.45,2.60); 

			\draw [fill=blue!60,fill opacity=0.8,draw=none] (1.90,2.50) rectangle (2.00,2.60); 

			\draw [fill=blue!60,fill opacity=0.8,draw=none] (2.35,2.25) rectangle (2.45,2.35);

			\draw [fill=blue!60,fill opacity=0.8,draw=none] (2.90,2.60) rectangle (3.00,2.65); 

			\draw [fill=blue!60,fill opacity=0.8,draw=none] (3.50,2.35) rectangle (3.60,2.45); 

			\draw [fill=blue!60,fill opacity=0.8,draw=none] (3.70,2.55) rectangle (3.75,2.65); 
			\draw [fill=blue!60,fill opacity=0.8,draw=none] (3.80,2.55) rectangle (3.85,2.65);

			\draw [fill=blue!60,fill opacity=0.8,draw=none] (4.25,2.45) rectangle (4.35,2.55); 

			\draw [fill=blue!60,fill opacity=0.8,draw=none] (4.60,2.60) rectangle (4.65,2.65);
			\draw [fill=blue!60,fill opacity=0.8,draw=none] (4.55,2.65) rectangle (4.65,2.70);

			\draw [fill=blue!60,fill opacity=0.8,draw=none] (4.95,2.25) rectangle (5.05,2.35);
	
			\draw [fill=blue!60,fill opacity=0.8,draw=none] (0.25,2.15) rectangle (0.35,2.25); 

			\draw [fill=blue!60,fill opacity=0.8,draw=none] (0.65,2.00) rectangle (0.75,2.10); 

			\draw [fill=blue!60,fill opacity=0.8,draw=none] (1.25,2.00) rectangle (1.30,2.05); 

			\draw [fill=blue!60,fill opacity=0.8,draw=none] (1.75,2.20) rectangle (1.80,2.25);

			\draw [fill=blue!60,fill opacity=0.8,draw=none] (1.85,2.15) rectangle (1.90,2.20);
			\draw [fill=blue!60,fill opacity=0.8,draw=none] (1.85,2.10) rectangle (1.95,2.15);

			\draw [fill=blue!60,fill opacity=0.8,draw=none] (2.50,2.10) rectangle (2.60,2.20); 

			\draw [fill=blue!60,fill opacity=0.8,draw=none] (2.70,2.05) rectangle (2.80,2.15); 

			\draw [fill=blue!60,fill opacity=0.8,draw=none] (3.15,2.05) rectangle (3.25,2.15); 

			\draw [fill=blue!60,fill opacity=0.8,draw=none] (3.70,1.85) rectangle (3.80,1.95);

			\draw [fill=blue!60,fill opacity=0.8,draw=none] (4.05,2.20) rectangle (4.15,2.25); 

			\draw [fill=blue!60,fill opacity=0.8,draw=none] (4.55,2.20) rectangle (4.65,2.25); 
			\draw [fill=blue!60,fill opacity=0.8,draw=none] (4.50,2.15) rectangle (4.55,2.20);

			\draw [fill=blue!60,fill opacity=0.8,draw=none] (4.95,2.05) rectangle (5.05,2.15);
	
			\draw [fill=blue!60,fill opacity=0.8,draw=none] (0.35,1.60) rectangle (0.45,1.65); 
			\draw [fill=blue!60,fill opacity=0.8,draw=none] (0.35,1.55) rectangle (0.40,1.60);
			\draw [fill=blue!60,fill opacity=0.8,draw=none] (0.40,1.50) rectangle (0.45,1.55);

			\draw [fill=blue!60,fill opacity=0.8,draw=none] (0.75,1.75) rectangle (0.80,1.80); 
			\draw [fill=blue!60,fill opacity=0.8,draw=none] (0.70,1.65) rectangle (0.80,1.75);

			\draw [fill=blue!60,fill opacity=0.8,draw=none] (1.15,1.70) rectangle (1.25,1.80); 
			\draw [fill=blue!60,fill opacity=0.8,draw=none] (1.20,1.65) rectangle (1.25,1.70);

			\draw [fill=blue!60,fill opacity=0.8,draw=none] (1.75,1.70) rectangle (1.80,1.75);
			\draw [fill=blue!60,fill opacity=0.8,draw=none] (1.70,1.65) rectangle (1.80,1.70); 

			\draw [fill=blue!60,fill opacity=0.8,draw=none] (1.85,1.70) rectangle (1.95,1.80); 

			\draw [fill=blue!60,fill opacity=0.8,draw=none] (2.50,1.55) rectangle (2.60,1.60);

			\draw [fill=blue!60,fill opacity=0.8,draw=none] (2.95,1.60) rectangle (3.05,1.70); 

			\draw [fill=blue!60,fill opacity=0.8,draw=none] (3.50,1.75) rectangle (3.55,1.80); 
			\draw [fill=blue!60,fill opacity=0.8,draw=none] (3.40,1.70) rectangle (3.50,1.80); 

			\draw [fill=blue!60,fill opacity=0.8,draw=none] (4.00,1.40) rectangle (4.05,1.50); 
			\draw [fill=blue!60,fill opacity=0.8,draw=none] (3.95,1.45) rectangle (4.00,1.50);

			\draw [fill=blue!60,fill opacity=0.8,draw=none] (4.05,1.35) rectangle (4.15,1.45); 

			\draw [fill=blue!60,fill opacity=0.8,draw=none] (4.85,1.60) rectangle (4.95,1.75); 

			\draw [fill=blue!60,fill opacity=0.8,draw=none] (4.95,1.65) rectangle (5.00,1.75); 
			\draw [fill=blue!60,fill opacity=0.8,draw=none] (5.00,1.65) rectangle (5.05,1.75);
	
			\draw [fill=blue!60,fill opacity=0.8,draw=none] (0.20,1.20) rectangle (0.30,1.30); 

			\draw [fill=blue!60,fill opacity=0.8,draw=none] (0.50,1.20) rectangle (0.60,1.35); 

			\draw [fill=blue!60,fill opacity=0.8,draw=none] (0.90,0.90) rectangle (1.05,1.00); 

			\draw [fill=blue!60,fill opacity=0.8,draw=none] (1.70,1.20) rectangle (1.80,1.30); 

			\draw [fill=blue!60,fill opacity=0.8,draw=none] (2.05,1.20) rectangle (2.15,1.30);

			\draw [fill=blue!60,fill opacity=0.8,draw=none] (2.50,1.00) rectangle (2.60,1.10);

			\draw [fill=blue!60,fill opacity=0.8,draw=none] (3.00,1.25) rectangle (3.10,1.30); 
			\draw [fill=blue!60,fill opacity=0.8,draw=none] (3.00,1.15) rectangle (3.10,1.20);

			\draw [fill=blue!60,fill opacity=0.8,draw=none] (3.25,1.25) rectangle (3.30,1.30);
			\draw [fill=blue!60,fill opacity=0.8,draw=none] (3.15,1.20) rectangle (3.25,1.25);

			\draw [fill=blue!60,fill opacity=0.8,draw=none] (3.95,1.25) rectangle (4.05,1.35); 

			\draw [fill=blue!60,fill opacity=0.8,draw=none] (4.05,1.25) rectangle (4.15,1.35); 

			\draw [fill=blue!60,fill opacity=0.8,draw=none] (4.85,1.10) rectangle (4.95,1.20);

			\draw [fill=blue!60,fill opacity=0.8,draw=none] (4.95,1.15) rectangle (5.05,1.25); 

			\draw [fill=blue!60,fill opacity=0.8,draw=none] (0.40,0.65) rectangle (0.45,0.70);
			\draw [fill=blue!60,fill opacity=0.8,draw=none] (0.35,0.60) rectangle (0.40,0.65);
			\draw [fill=blue!60,fill opacity=0.8,draw=none] (0.40,0.60) rectangle (0.45,0.65);

			\draw [fill=blue!60,fill opacity=0.8,draw=none] (0.80,0.70) rectangle (0.90,0.80);

			\draw [fill=blue!60,fill opacity=0.8,draw=none] (1.10,0.80) rectangle (1.20,0.90); 

			\draw [fill=blue!60,fill opacity=0.8,draw=none] (1.60,0.70) rectangle (1.75,0.75); 

			\draw [fill=blue!60,fill opacity=0.8,draw=none] (1.85,0.70) rectangle (1.95,0.80);

			\draw [fill=blue!60,fill opacity=0.8,draw=none] (2.50,0.75) rectangle (2.60,0.85); 

			\draw [fill=blue!60,fill opacity=0.8,draw=none] (2.95,0.80) rectangle (3.05,0.90); 

			\draw [fill=blue!60,fill opacity=0.8,draw=none] (3.15,0.70) rectangle (3.25,0.80);

			\draw [fill=blue!60,fill opacity=0.8,draw=none] (4.00,0.70) rectangle (4.05,0.75); 
			\draw [fill=blue!60,fill opacity=0.8,draw=none] (3.95,0.65) rectangle (4.00,0.70);
			\draw [fill=blue!60,fill opacity=0.8,draw=none] (3.95,0.60) rectangle (4.00,0.65);
			\draw [fill=blue!60,fill opacity=0.8,draw=none] (4.00,0.60) rectangle (4.05,0.65);

			\draw [fill=blue!60,fill opacity=0.8,draw=none] (4.40,0.85) rectangle (4.45,0.90);
			\draw [fill=blue!60,fill opacity=0.8,draw=none] (4.35,0.80) rectangle (4.40,0.85);
			\draw [fill=blue!60,fill opacity=0.8,draw=none] (4.45,0.80) rectangle (4.50,0.85);
			\draw [fill=blue!60,fill opacity=0.8,draw=none] (4.40,0.75) rectangle (4.45,0.80);
			\draw [fill=blue!60,fill opacity=0.8,draw=none] (4.45,0.75) rectangle (4.50,0.80);

			\draw [fill=blue!60,fill opacity=0.8,draw=none] (4.80,0.80) rectangle (4.90,0.90); 

			\draw [fill=blue!60,fill opacity=0.8,draw=none] (5.30,0.80) rectangle (5.40,0.90);
	
			\draw [fill=blue!60,fill opacity=0.8,draw=none] (0.20,0.30) rectangle (0.25,0.35);
			\draw [fill=blue!60,fill opacity=0.8,draw=none] (0.30,0.30) rectangle (0.35,0.35);
			\draw [fill=blue!60,fill opacity=0.8,draw=none] (0.20,0.25) rectangle (0.25,0.30);
			\draw [fill=blue!60,fill opacity=0.8,draw=none] (0.25,0.25) rectangle (0.30,0.30);

			\draw [fill=blue!60,fill opacity=0.8,draw=none] (0.80,0.05) rectangle (0.90,0.15); 

			\draw [fill=blue!60,fill opacity=0.8,draw=none] (0.90,0.00) rectangle (1.00,0.10);

			\draw [fill=blue!60,fill opacity=0.8,draw=none] (1.70,0.25) rectangle (1.80,0.35);

			\draw [fill=blue!60,fill opacity=0.8,draw=none] (1.80,0.30) rectangle (1.90,0.45); 

			\draw [fill=blue!60,fill opacity=0.8,draw=none] (2.55,0.10) rectangle (2.65,0.20);

			\draw [fill=blue!60,fill opacity=0.8,draw=none] (2.80,0.05) rectangle (2.90,0.15);

			\draw [fill=blue!60,fill opacity=0.8,draw=none] (3.25,0.35) rectangle (3.35,0.40); 
			\draw [fill=blue!60,fill opacity=0.8,draw=none] (3.25,0.25) rectangle (3.35,0.30);

			\draw [fill=blue!60,fill opacity=0.8,draw=none] (3.95,0.25) rectangle (4.05,0.35);

			\draw [fill=blue!60,fill opacity=0.8,draw=none] (4.10,0.10) rectangle (4.15,0.15);
			\draw [fill=blue!60,fill opacity=0.8,draw=none] (4.05,0.05) rectangle (4.10,0.10); 
			\draw [fill=blue!60,fill opacity=0.8,draw=none] (4.15,0.05) rectangle (4.20,0.10);
			\draw [fill=blue!60,fill opacity=0.8,draw=none] (4.10,0.00) rectangle (4.15,0.05);
			\draw [fill=blue!60,fill opacity=0.8,draw=none] (4.15,0.00) rectangle (4.20,0.05); 

			\draw [fill=blue!60,fill opacity=0.8,draw=none] (4.85,0.25) rectangle (4.95,0.35);

			\draw [fill=blue!60,fill opacity=0.8,draw=none] (5.20,0.00) rectangle (5.30,0.10);
	\draw[step=.45cm,black,thin] (0,0) grid (5.4,5.4);
\end{tikzpicture}
\caption{(C.f.~\cite[Fig. 2]{ProcacciaRosenthalSapozhnikov}.) The scales in the figure are $l_1 = 12$, $r_1 = 3$, $l_0 = 9$, and $r_0 = 2$. The small white boxes have size $L_0$. Then the figure shows $12 \times 12$ boxes of size $L_1$, which constitute one box of size $L_2$.  In the figure bad boxes are coloured.} 

\end{figure}
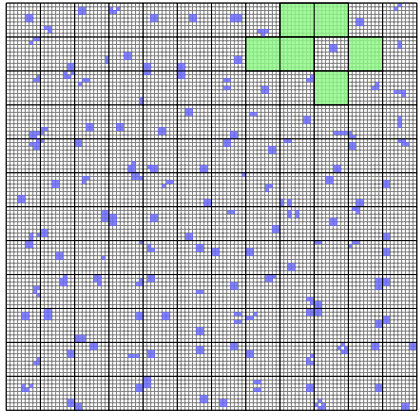

For $ k \geq 0$, define the renormalized graph $\mathbb{G}_k$ to be the graph whose vertex set is
\[
	\mathbb{G}_k \coloneqq L_k \mathbb{Z}^d = \{ L_k \cdot x \: : \: x \in \mathbb{Z}^d \}.
\]
The corresponding edge set is the collection of pairs of vertices at distance $L_k$, in other words, $\mathbb{G}_k$ has edges between its nearest neighbours  (when $\mathbb{G}_k$ is considered as an isomorphic graph to $\mathbb{Z}^d$). For $k \geq 1$ and $x \in \mathbb{Z}^d$, write
\[
	V (x, L_k) := x + [0, L_k)^d
\]
for the (half-closed) 
box with edge-corner $x$ and side length $L_k$.

Let us define conditions for a vertex in $\mathbb{G}_0$ to be good at level $0$. 
Recall the constant $C_{\const{prop:annulus}{\ell} } > 0$ from Proposition~\ref{prop:annulus}, and based on the event considered in that proposition, for $x \in \mathbb{G}_0$ and $ u, v > 0 $ we define
\begin{equation} \label{eq:multiscale_0-good}
		G^{u,v}_{x, 0} \coloneqq \left\{   \hat{d}_v \Big( \partial B_x (L_u), \partial B_x (2L_u) \Big) > C_{\const{prop:annulus}{\ell} } u^{-\frac{1}{2}+8\varepsilon} L_u \right\} .
\end{equation}

\begin{definition} \label{def:multiscale_0-good}
	The vertex $x \in  \mathbb{G}_0$ is \emph{$0$-good} if the event
	\[ 
		G^{u,u}_{x, 0} = \left\{   \hat{d}_u \Big( \partial B_x (L_u), \partial B_x (2L_u) \Big) > C_{\const{prop:annulus}{\ell} } u^{-\frac{1}{2}+8\varepsilon} L_u \right\}
	\]
	happens.
	In this case, we also call $ V (x, L_0) $ a $0$-good box. 
	Otherwise, $x$ is a $0$-bad vertex and $ V (x, L_0) $ is a $0$-bad box.
\end{definition}

The favourable event $G_{x, 0}^{u,v}  \in \sigma \left( \Psi_{(w,z)} \st w, z \in B_x (2L_0) \right)$ in~\eqref{eq:multiscale_0-good} is monotonic decreasing in $v$ (recall that $(\Psi_e)_{e \in E}$ are the canonical edge maps in~\eqref{eq:coordmaps}). 
Indeed, the complement of \eqref{eq:multiscale_0-good} is the event
\begin{equation} \label{eq:0bad-event}
	\overline{G}^{u,v}_{x, 0} \coloneqq  \left( G^{u,v}_{x,0} \right)^c = \left\{  
		\begin{array}{c} 
		\text{There exists a path } \gamma \text{ in } \cI^v \text{ connecting} \\ 
		 \partial B_x(L_u) \text{ and } \partial B_x(2 L_u) \text{ such that}  \\
			 \len (\gamma) <  C_{\const{prop:annulus}{\ell}} u^{-\frac{1}{2}+8\varepsilon} L_u
		\end{array}  
	\right\}.
\end{equation}
which is monotonic increasing in $v$.
We consider $ \overline{G}^{u,v}_{x, 0}$ as an unfavourable event.

\begin{remark}
	The half-closed boxes $V (x, L_k)$ refer to the state of the vertex $x$ on the renormalization. We follow the convention in these type of arguments, and set $x$ as the left-bottom corner of $V (x, L_k)$. We remark, however, that the favourable event $G_{x,0}^{u,v}$ is measurable on the $\sigma$-algebra generated by the coordinate maps of $B_x (2L_0)$; where in this later box, $x$ is at the centre.  
\end{remark}

In the framework of the multi-scale renormalization,
Proposition~\ref{prop:annulus} shows that a vertex $x \in \mathbb{G}_0$ is $0$-good with high probability as $u \to 0$, or equivalently, as $L_0 \to \infty$ (here we recall that $\varepsilon > 0 $ in \eqref{eq:L0_scale} is a fixed value).
Let us present this statement as a corollary of Proposition~\ref{prop:annulus}.
\begin{lemma} \label{lemma:high_0-good}
	There exists a $  \newet \label{uvfix} \in (0,1) $ such that 
	\[ 
		\adjustlimits\lim_{u \to 0} \sup_{x \in \mathbb{G}_0}  \prob^{u^{1-\et{uvfix}}} \left(  \overline{G}^{u,u^{1-\et{uvfix}}}_{x,0} \right) = 0.
	\] 
	Furthermore, the convergence is uniform on compacts.
\end{lemma}
Note that the $8\varepsilon$ instead of $7\varepsilon$ in the definition of $\overline{G}^{u,v}_{x, 0} $, accounts for the scale change one obtains from $u^{1-\et{uvfix}}$.

In view of Lemma~\ref{lemma:high_0-good}, the monotonicity in $v$ and the definition of $L_u$, we thus define a sequence $(b_k)_{k \geq 1}$  satisfying that for each $k \geq 1$  
\begin{equation} \label{eq:b_k}
		\sup_{x \in \mathbb{G}_0}  \prob^{u} \left(  \overline{G}^{u,v}_{x,0} \right) \leq 2^{-2^{k}}, \qquad \text{ for all } u \in (0 , b_k)\text{ and }v\le u^{1-\et{uvfix}}.
\end{equation}
In particular, since $ u < u^{1-\et{uvfix}} $ whenever $u \in (0,1) $, $\sup_{x \in \mathbb{G}_0}  \prob^{u} \left(  \overline{G}^{u,u}_{x,0} \right) \leq 2^{-2^{k}}$ for $u \in (0 , b_k)$.

We define recursively the conditions for a $k$-good vertex in $\mathbb{G}_k$.

\begin{definition} \label{def:k-good}
	
	For $k \geq 1$ and $x \in \mathbb{G}_k$, the \emph{complement} of the $k$-good event at $x$ is
	\begin{equation} \label{eq:kbad_event}
		\overline{G}^{u,v}_{x , k} \coloneqq \bigcup_{ \substack{x_1, x_2 \in \mathbb{G}_{k-1} \cap V (x, L_k)  \\ \vert x_1 - x_2 \vert_{\infty} > r_{k-1} \cdot L_{k-1} }  } \overline{G}^{u,v}_{x_1, k-1} \cap \overline{G}^{u,v}_{x_2, k-1}.
	\end{equation}
	On the event $\overline{G}^{u,v}_{x, k}$, the vertex $x$ and the box $V (x, L_k)$ are $k$-bad since there are two $(k-1)$-bad boxes inside $V (x, L_k)$ which are apart of each other.
	Otherwise, if the favourable event $G^{u,v}_{x,k}$ happens, then $x$ and  $V (x, L_k)$ are $k$-good.
\end{definition}

Next we prove that a  box is $k$-good with high probability. This proof follows the argument in~\cite[Theorem 4.1]{DrewitzRathSapozhnikov}, but we  remark on two critical differences between the later theorem and Proposition~\ref{prop:multi_renormalization}. In~\cite{DrewitzRathSapozhnikov}, Theorem 4.1 requires a hypothesis on $L_0$ as $L_0 \to \infty$, but in our case $L_0$ has been determined in~\eqref{eq:L0_scale}. The proof shows that our choice of $L_0$ satisfies the requirements of~\cite[Theorem 4.1]{DrewitzRathSapozhnikov}.
The second difference is that our scheme holds uniformly for $u$ small enough. For this uniformity, we require the decoupling inequality in Theorem~\ref{thm:RI_decorrelation}.
We provide the details of the proof for clarity.

\begin{prop} \label{prop:multi_renormalization}
	Let $  \uBound{u:fixedEps} > 0 $ as in the beginning of this section, take  $ u  \in (0, \uBound{u:fixedEps} )$ and $L_0 = L_u$.
	There  exists $l_0 \geq 1$ and $C = C(\uBound{u:fixedEps} , l_0) < \infty$  such that for any choice of $l_0/4>r_0 \geq C $ and $\theta_{\text{sc}} = \theta_{\text{sc}} (\uBound{u:fixedEps}, l_0) > 0$  the following holds.
	Consider the renormalization scheme defined by the  scales $ \Theta = (\theta_{\text{sc}}, L_0, l_0, r_0 )$. In particular, recall the sequences in~\eqref{eq:renormalization-seq} defined by
	\begin{equation} \label{eq:escalesprop}
		l_k = l_0 \cdot 4^{k^{\theta_{\text{sc}}}} , 
		\quad r_k = r_0 \cdot 2^{k^{\theta_{\text{sc}}}}, 
		\quad L_k = l_{k-1} \cdot L_{k-1}, \quad k \geq 1,
	\end{equation}
	and the corresponding $k$-good events.
	Then there exists $ \newuBound \label{u:renormalization}  \in (0, \uBound{u:fixedEps} )$ such that for all $k \geq 0$  
	\begin{equation} \label{eq:boundk}
		\sup_{x \in \mathbb{G}_k} \prob^u \left( \overline{G}^{u,u}_{x, k} \right) \leq 2^{-2^k}, \qquad \text{ for all }  u \in (0, \uBound{u:renormalization}).
	\end{equation}
	Moreover, the scales $(r_k)_{k \geq 0}$ and $(l_k)_{k \geq 0}$ satisfy: 
	\begin{equation} \label{eq:boundVolume}
		\prod_{k = 0}^{\infty} \left( 1  - \frac{r_k}{l_k} \right) > \frac{1}{2}.
	\end{equation}
\end{prop}

\begin{proof}[{Proof of Proposition~\ref{prop:multi_renormalization}}]
	This proposition follows from the stochastic monotonicity of random interlacements with respect to the intensity parameter, Proposition~\ref{prop:RI_monotone}, and the weak decorrelation inequality in~\cite{PopovTeixeira} (see Theorem~\ref{thm:RI_decorrelation}).

	We begin by setting values for the scales $l_0$ and $r_0$ in $\Theta  = (\theta_{\text{sc}}, L_0, l_0, r_0 )$ and the upper bound for the intensity indicated in~\eqref{eq:boundk}.

	Before we fix a value for the positive integer $l_0 > 1$, let us consider an auxiliary sequence $(\kappa_{k})_{k \geq 0}$ defined by 
	\[
		\kappa_k = 1 + l_0 \cdot \sum_{i = k +  1}^{\infty} i^{-2} = \kappa_{k + 1} + l_0 \cdot (k + 1)^{-2},
	\]
	for each $k \geq 0$.
	Recall that $  l_k = l_0 \cdot 4^{k^{\theta_{\text{sc}}}}$. Keeping in mind the definition of $(\kappa_{k})_{k \geq 0}$ above, there exists $C < \infty$ such that for all $k \geq 0$ and $l_0 \geq C$:
	\begin{equation} \label{eq:scalesineq-1}
		\log (l_k^{2d}) - \kappa_k 2^{k + 1} \leq - \kappa_{k+1} 2^{k+1} -1.
	\end{equation}
	So we choose $l_0 \geq C$ and~\eqref{eq:scalesineq-1} holds. 

	Next we choose an upper bound for the intensity. By Lemma~\ref{lemma:high_0-good} (see Equation~\eqref{eq:b_k}),  there exists $\newuBound \label{u:startrenorm}>0$ such that for any $u \in (0,   \uBound{u:startrenorm})$ and $v \in ( 0, u^{1-\et{uvfix}})$:
	\begin{equation} \label{eq:basecase}
		\sup_{x \in \mathbb{G}_0}  \prob^{v} \left( \overline{G}^{u,v}_{x,0} \right) \leq 2^{-\kappa_0}.
	\end{equation}
	We set $ \uBound{u:renormalization}  \coloneqq \min\{ \uBound{u:startrenorm}, \uBound{u:fixedEps} , 2^{-1} \}  $ as the upper bound for the intensity.

	Take $ \hat{u} \in (0, \uBound{u:renormalization} ) $. Our goal is to show~\eqref{eq:boundk} for the intensity parameter $\hat{u}$:
	\[
		\sup_{x \in \mathbb{G}_{k}} \prob^{\hat{u}} \left( \overline{G}^{\hat{u},\hat{u}}_{x, k} \right) \leq 2^{-2^k}, \qquad \text{ for all } k \geq 0.
	\]
	The proof goes through a decreasing sequence of intensity values in $(\hat{u}, \hat{u}^{1-\et{uvfix}} )$.  Since $ \hat{u} < \uBound{u:renormalization} \leq 2^{-1} $, we can choose  $\hat{\delta} = \hat{\delta} (\et{uvfix}) > 0$ be such that
	\begin{equation} \label{eq:delta}
		(1 + \hat{\delta})^{-1} \hat{u}^{1-\et{uvfix}} \in (\hat{u} , \hat{u}^{1-\et{uvfix}}).
	\end{equation}
  Next choose $r_0 = r_0  ( \et{uvfix}) > 1  $ large enough, and choose $l_0(r_0)$ large enough (satisfying \eqref{eq:scalesineq-1}), such that the sequence $(r_k)_{k \geq 0}$ defined in~\eqref{eq:renormalization-seq} satisfies
	\begin{equation} \label{eq:r0}
			\prod_{k = 0}^{\infty} \left( 1 + r_k^{- \frac{1}{4}} \right) \leq 1 + \hat{\delta} \quad \text{ and }\quad \prod_{k = 0}^{\infty} \left( 1  - \frac{r_k}{l_k} \right) > \frac{1}{2} .
	\end{equation} 
	Note that the second condition in~\eqref{eq:r0} is similar to~\cite[Equation 3.18]{ProcacciaRosenthalSapozhnikov} and gives~\eqref{eq:boundVolume}.
	We  set  
	\begin{equation} \label{eq:secV}
		\hat{u}_0 \coloneqq \hat{u}^{1-\et{uvfix}} \qquad  \text{and}  \qquad
		\hat{u}_{k+1} \coloneqq  \left(1 + r_k^{-\frac{1}{4}} \right)^{-1} \cdot \hat{u}_k \quad \text{for } k \geq 1.
	\end{equation}
	The restrictions over $\hat{\delta}$ and $r_k$ in~\eqref{eq:delta} and~\eqref{eq:r0}, respectively, and the definition of $(\hat{u}_k)_{k \geq 0}$ imply that
	\begin{equation} \label{eq:seq_u}
			0 < \hat{u}  < \hat{u}_{k+1} < \hat{u}_k \leq  \hat{u}_0 = \hat{u}^{1 -  \et{uvfix} } , \qquad \text{ for all } k \geq 0.
	\end{equation}

	Since the events $ \overline{G}^{u,v}_{x,k}$ are monotonic increasing in $v$, Proposition~\ref{prop:RI_monotone} and~\eqref{eq:seq_u} imply that
	\begin{equation} 
			\sup_{x \in \mathbb{G}_k} \prob^{ \hat{u} } \left( \overline{G}^{\hat{u},\hat{u}}_{x,k} \right)\leq \sup_{x \in \mathbb{G}_k} \prob^{\hat{u}_k} \left( \overline{G}^{\hat{u},\hat{u}_k}_{x,k} \right) , \qquad \text{ for all } k \geq 1.
	\end{equation}
	Therefore, it suffices to show
	\begin{equation} \label{eq:inductionkey}
		\sup_{x \in \mathbb{G}_k} \prob^{\hat{u}_k} \left( \overline{G}^{\hat{u},\hat{u}_k}_{x,k} \right) \leq 2^{-\kappa_k 2^k}, \qquad \text{ for all } k \geq 1.
	\end{equation}

	We proceed to prove~\eqref{eq:inductionkey} by induction on $k$. 
	The choice of $\hat{u}_0$ in~\eqref{eq:secV} implies the base case. 
	We now assume that \eqref{eq:inductionkey} holds for some $k > 0$. For each $x \in \mathbb{G}_{k+1}$, a union bound and Theorem~\ref{thm:RI_decorrelation} imply that 
\begin{align}  
	\prob^{ \hat{u}_{k+1}} &\left( \overline{G}^{\hat{u}, \hat{u}_{k+1}}_{x, k + 1} \right)
	\leq
				\sum_{ \substack{x_1, x_2 \in \mathbb{G}_{k} \cap V (x, L_{k+1})  \\ \vert x_1 - x_2 \vert_{\infty} > r_{k} \cdot L_{k} }  }  
				\prob^{\hat{u}_{k+1}} \left( \overline{G}^{\hat{u}, \hat{u}_{k+1}}_{x_1, k} \cap \overline{G}^{\hat{u}, \hat{u}_{k+1}}_{x_2, k} \right)\nonumber  \\
	&< 
				\vert \mathbb{G}_{k} \cap V (x, L_{k+1})  \vert^2 \left( \sup_{x \in \mathbb{G}_k} 
				\prob^{\hat{u}_k} \left( \overline{G}^{\hat{u}, \hat{u}_{k}}_{x,k} \right)^2 
				+ \cnt{c:RIdecoupling} ( 4L_{k+1})^d e^{ - \expo{e:RIdecoupling} r_k^{-1 / 2} \hat{u}_k {L_{k}^{d-2}}  }\right) \nonumber  \\
	&\leq l_k^{2d} \left( 2^{-\kappa_k 2^{k + 1}} +  e^{ - \expo{e:RIdecoupling} r_k^{-1 / 2} \hat{u}_k {L_{k }^{d-2}}  +  d \log (L_{k+1})+ \log (c)}  \right).  \label{eq:renorm}
\end{align}
In the second inequality, we apply Theorem~\ref{thm:RI_decorrelation} with parameters $u = \hat{u}_{k+1}$ and $(1 + \varepsilon) u = \hat{u}_{k}$; hence~\eqref{eq:secV} implies taking $\varepsilon = r_k^{-1 / 4}$. Our scale choices for the renormalization scheme imply that, in our application of Theorem~\ref{thm:RI_decorrelation}, we take $R = L_k $ and $L = r_k \cdot L_{k} < L_{k+1}$.

We now consider the last exponent in~\eqref{eq:renorm} and set 
\begin{equation}
	\begin{split}  \label{eq:fP}
	f_P (L_k) &\coloneqq \expo{e:RIdecoupling} r_k^{-1 / 2} \hat{u}_k {L_{k}^{d-2}}  -  d \log (L_{k+1}) - \log (c) \\
						&=  \expo{e:RIdecoupling} r_k^{-1 / 2} \hat{u}_k {L_{k}^{d-2}}  -  d \log (l_{k}) -d \log(L_{k}) - \log (c) .
	\end{split}
\end{equation}
Then the inequality above takes the form
\begin{equation} \label{eq:inductivestep}
	\prob^{\hat{u}_{k+1}} \left( \overline{G}^{\hat{u}, \hat{u}_{k+1}}_{x, k + 1} \right) \leq l_k^{2d} \left( 2^{-\kappa_k 2^{k + 1}} +  e^{ - f_P (L_k) } \right).
\end{equation}
From the expression in~\eqref{eq:fP}, we see that there exists $C' = C' (l_0) < \infty$ so that for $\theta_{\text{sc}} \geq C'$  we have that
\[
	f_P (L_k) \geq  \kappa_k 2^{k + 1} \qquad \text{ for all }  k \geq 0.
\]
and hence, by~\eqref{eq:scalesineq-1},
\begin{equation} \label{eq:scalesineq-2} 
	\log (l_k^{2d}) - f_P (L_k) \leq - \kappa_{k+1} 2^{k+1} - 1.
\end{equation}
Plugging~\eqref{eq:scalesineq-1} and~\eqref{eq:scalesineq-2} into~\eqref{eq:inductivestep}, we obtain that~\eqref{eq:inductionkey} holds for $k + 1 $.
Hence, by the principle of mathematical induction, we have proved~\eqref{eq:inductionkey} for all $k \geq 0$, as desired.
\end{proof}

We consider a fattened version of the good event $G^{u, u}_{0, k}$. Under this event, we will be able to consider any geodesic between $[0]_u$ and $[L_k/L_0 \cdot e_1]_u$. 
Let $z_0 = (-L_k , \ldots , -L_k )$ and set
 
\begin{equation}   \label{eq:fatgoodbox}
	\mathfs{G}^{u}_{0, k}  =   \bigcap_{  x_i \in \mathbb{G}_k \cap V(z_0, 3L_k) } G^{u,u}_{x_i, k}  
\end{equation}

\begin{figure}[h] \label{fig:path-to-boxes}
\centering
	\begin{tikzpicture}

		\draw[step=.2cm,gray,very thin] (-4.001,-4.001) grid (2,2) ;
		\draw[step=2cm, thin] (-4.001,-4.001) grid (2,2) ;

		\fill[fill=black] (-4, -4) circle[radius=2pt]  node[anchor=east]  {\small $ z_0 $};

		\draw[dashed, fill=white,fill opacity=0.8] (-2.50, -2.50) rectangle (-1.50, -1.50);
		\fill[fill=black] (-2, -2)  circle[radius=2pt] node[anchor=north]  {\small $ 0 $};

		\draw[dashed, fill=white,fill opacity=0.8] (-.5, -2.5) rectangle (.5, -1.5);
		\fill[fill=black] (0, -2)  circle[radius=2pt]  node[anchor=north]  {\small $ L_k  e_1 $};
				
		\draw [decorate,decoration={brace,amplitude=5pt,mirror,raise=.3em}]
		(-4, -4) -- (-2, -4) node[midway,yshift=-1.5em]{\small  $ V(z_0, L_k) $};

		\draw [black,thick] plot [smooth, tension=1] 
			 coordinates { 
				(-1.7, -2.30)
				(-.7, -2.7)
				(-.5, -3.5)
				(1.8, -3.3)
				(3, -3.2)
				(2.1, -1.7)
				(1.5, -3)
				(.4, -2.3) 
			};

		\fill[fill=black] (-1.7, -2.30) circle[radius=1pt];
		\fill[fill=black] (.4, -2.3)  circle[radius=1pt];
	\end{tikzpicture}
	\caption{On the event $\mathfs{G}^{u}_{0,k}$, the $k$-boxes around $0$ and $L_k \cdot e_1$ are good.} 
\end{figure}
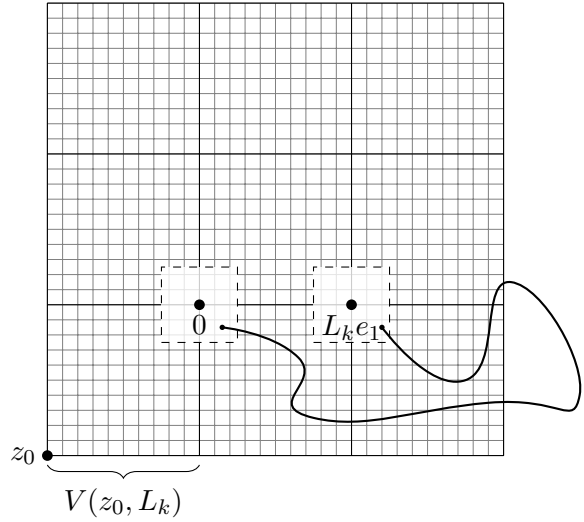

\begin{lemma} \label{lemma:box_intersection}
Assume the event $\mathfs{G}^{u}_{0, k} $ holds. Then there exists an absolute constant $\delta>0$ such that any path  on  $\mathbb{G}_0$ connecting any $0$-boxes $V_1 \subset V(0, L_k/4)$ and $V_2 \subset V(L_k/L_0 \cdot e_1, L_k/4)$ must cross at least $\delta L_k/L_0$ $0$-good boxes.
\end{lemma}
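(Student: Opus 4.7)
The proof proceeds by induction on the renormalization level $k$, exploiting the structural description of $k$-good boxes as those whose $(k-1)$-bad sub-boxes are confined to a single small cube. The setup is geometric: since the first-coordinate corners of $V_1 \subset V(0, L_k/4)$ and $V_2 \subset V(L_k e_1, L_k/4)$ differ by at least $3L_k/4$, any path $\gamma$ on $\mathbb{G}_0$ from $V_1$ to $V_2$ realizes every $L_0$-multiple first-coordinate value in an interval of length at least $3L_k/4$; in particular, the restriction of $\gamma$ to $V(0,L_k)$ has first-coordinate extent at least $L_k/2$, and $V(0,L_k)$ is $k$-good by hypothesis $\mathfs{G}^u_{0,k}$.

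\emph{Inductive claim.} I will show that for every $k \geq 0$ and every $k$-good $k$-box $V$, any $\mathbb{G}_0$-path $\gamma$ whose restriction to $V$ has first-coordinate extent at least $e$ visits at least $\delta_k \cdot e/L_0 - c_k$ distinct $0$-good $0$-boxes inside $V$, where $\delta_k = \prod_{i=0}^{k-1}(1-r_i/l_i)$ and $c_k$ is an additive boundary correction of order $\sum_{i<k}(r_i+2)L_i/L_0$. The base case $k=0$ is immediate because $V$ is $0$-good. For the inductive step, I partition $V$ into $l_{k-1}$ first-coordinate slabs of width $L_{k-1}$; since all $(k-1)$-bad $(k-1)$-boxes in $V$ lie in a cube of side at most $r_{k-1}L_{k-1}$, they span at most $r_{k-1}$ consecutive slabs, leaving at least $l_{k-1}-r_{k-1}$ ``all-good'' slabs consisting entirely of $(k-1)$-good $(k-1)$-boxes. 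In each all-good slab fully covered by $\gamma$'s first-coordinate range, $\gamma$ visits a collection of $(k-1)$-good sub-boxes whose individual first-coordinate extents inside the slab sum to $L_{k-1}$. Applying the inductive hypothesis inside each of these sub-boxes — and using that $0$-good $0$-boxes in distinct $(k-1)$-boxes are automatically distinct — yields at least $\delta_{k-1}L_{k-1}/L_0 - O(c_{k-1})$ $0$-good boxes per fully covered all-good slab. Summing over the $\geq \lfloor e/L_{k-1}\rfloor - r_{k-1} - O(1)$ such slabs produces $\geq \delta_{k-1}(1-r_{k-1}/l_{k-1})\, e/L_0 - c_k$, completing the induction.

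\emph{Conclusion and main obstacle.} Applying the claim with $e = L_k/2$ to $V(0,L_k)$ and invoking \eqref{eq:boundVolume} (which gives $\delta_k > 1/2$ uniformly in $k$) yields at least $\delta L_k/L_0$ distinct $0$-good $0$-boxes on $\gamma$ for some absolute $\delta > 0$ (e.g., $\delta=1/8$), once $L_k/L_0$ is sufficiently large for the correction $c_k = O(\sum_i (r_i+2) L_i/L_0) \ll L_k/L_0$ to be negligible. The main technical obstacle is the additivity step within each all-good slab: when $\gamma$ zigzags in the other coordinates, it may visit several $(k-1)$-boxes inside a single slab, each with only a partial first-coordinate extent. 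The extent-parameterized form of the inductive hypothesis is chosen precisely so that contributions from these sub-boxes add back up to the full $\delta_{k-1}L_{k-1}/L_0$ within the slab, thereby allowing the per-level factor $(1 - r_{k-1}/l_{k-1})$ to telescope against \eqref{eq:boundVolume} rather than deteriorating to $(1/2)^k$.
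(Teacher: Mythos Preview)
Your approach is the paper's: slab-wise induction on $k$ with the per-level factor $(1-r_{k-1}/l_{k-1})$ telescoped against \eqref{eq:boundVolume}. You give considerably more detail than the paper's terse one-paragraph argument and correctly isolate the zigzag issue (the path may split its first-coordinate crossing of an all-good slab among several $(k-1)$-sub-boxes) as the crux.

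There is, however, a gap in your proposed resolution. When you apply the extent-parameterized hypothesis to each of the $m$ sub-boxes visited in a slab and sum, the main terms $\delta_{k-1}e_B/L_0$ do add back up to $\delta_{k-1}L_{k-1}/L_0$ as you say, but the additive corrections accumulate to $m\,c_{k-1}$, not $O(c_{k-1})$ with an absolute implied constant. The path can wander arbitrarily in the non-first coordinates within a slab, so $m$ is not bounded independently of $\gamma$, and your recursion for $c_k$ then does not close to the claimed order $\sum_{i<k}(r_i+2)L_i/L_0$; already at level~$2$ one picks up a factor $l_1\cdot m$ in front of $c_1$. The paper's brief proof glosses over exactly this point---it implicitly assumes that each $(k-1)$-good box the path intersects is itself ``crossed'' with full first-coordinate extent so that the induction applies directly---so your write-up is no worse than the original; but as written neither argument is complete on this step.
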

\begin{proof}
The proof is by induction. By the multi-scale renormalization construction, there is at most one region of side length $r_{k-1}$  enclosing all the $(k-1)$-bad boxes within a $k$-good box. Then any path connecting $V_1$ and $V_2$ crosses at least $ \frac{1}{2}L_k / L_0 $ $0$-boxes
inside the big box $V(z_0, 3L_k)$ considered in the event~\eqref{eq:fatgoodbox}.  Then we must intersect
\[  
	\frac{1}{2}\left(l_{k-1} - r_{k-1}\right) = \frac{1}{2} l_{k-1}\left(1- \frac{r_{k-1}}{l_{k-1}}\right) 
\] 
$(k-1)$-good boxes. An induction argument over $k$ shows that any path connecting the $0$-boxes $V_1$ and $V_2$ must intersect at least
 \[  \frac{1}{2}\prod_{i=1}^{k}l_{i-1}\left(1-\frac{r_{i-1}}{l_{i-1}}\right) ,\]
 $0$-good boxes.
Note that $\prod_{i=1}^{k}l_{i-1}=L_k / L_0$. Moreover,~\eqref{eq:boundVolume}  implies  that 
$$
 	\prod_{i=1}^{k} \left(1-\frac{r_{i-1}}{l_{i-1}}\right)\ge \prod_{k=1}^\infty\left(1-\frac{r_{k-1}}{l_{k-1}}\right) > \frac{1}{2}.$$
We obtain the conclusion by setting $ \delta =  1/4 $.
\end{proof}

\subsection{Proof of Proposition~\ref{prop: asymp-bound}}\label{subsec:proof_of_proposition}

 Now that we have established the scheme and the geometric consequences for any path between $[0]_u$ and $[L_ke_1]_u$ in Lemma~\ref{lemma:box_intersection}, we are ready to prove the main proposition of this section.  For our framework to work, we need an additional event:
\[
		\mathfs{E}_k =  \Big\{  \vert  0  - [0]_u \vert < L_k/4, \quad  \vert  L_k e_1  - [L_ke_1]_u \vert < L_k/4 \Big\}.
\] 

\begin{proof}[Proof of Proposition~\ref{prop: asymp-bound}]
	From a well-known estimate on the probability of intersection of a finite set and the random interlacements graph (see e.g. \cite[Equation (2.1.2)]{DRSbook}), we have that the probability of the event $\mathfs{E}_k$ is at least
	\[
		\prob \left( \mathfs{E}_k \right) \geq 1 -  2\exp \left( - u \cdot \capa (V (z_0, L_k/2 ) ) \right).
	\]
	We have that there is some $\newcnt \label{c:capacity} = \cnt{c:capacity} (d) > 0$ such that $\capa V (z_0, L_k/2) \geq \cnt{c:capacity} L_k^{d-2} $, and our choice of $L_k$ in Proposition~\ref{prop:multi_renormalization} implies that there is a $c>0$ such that
	\begin{equation} \label{eq:intersection}
		 u \cdot \capa V (z_0, 3L_k) \ge \sqrt{L_k}\ge c2^k
	\end{equation}
	for all $u> 0$ small enough, uniformly for all $k$. We thus get that there exists $ \newuBound \label{u:intersectionI} \in (0,1) $ such that $u \in (0, \uBound{u:intersectionI})$ satisfies~\eqref{eq:intersection}. 

	On the other hand, by Proposition~\ref{prop:multi_renormalization} and a union bound on the $3^d$ $k$-boxes considered in~\eqref{eq:fatgoodbox}, 
	\[
		\prob \left( \mathfs{G}^{u}_{0, k} \right) \geq 1-  3^d \cdot 2^{-2^k} \qquad
		\text{ for all } u \in (0, \uBound{u:renormalization}). 
	\]
	We now set $ \uBound{u:propLowerBound} \coloneqq \min \{ \uBound{u:renormalization} , \uBound{u:intersectionI} \} $. Then, for every $u  \in (0, \uBound{u:propLowerBound})$ we get
	\[
		\prob \left( \mathfs{E}_k \cap  \mathfs{G}^{u}_{0, k} \right)	
					\geq 1 - c e^{ -c2^{k}} \quad \text{ for all } u \in (0, \uBound{u:propLowerBound} ).
	\]

	Let $\gamma$ be an arbitrary path on $\mathcal{I}^u$ between $[0]_u$ and $[L_k e_1]_u$. 
	Let $\delta > 0$ be the absolute constant in Lemma~\ref{lemma:box_intersection}. 
	On the event $ \mathfs{E}_k \cap  \mathfs{G}^{u}_{0, k}$, Lemma~\ref{lemma:box_intersection} implies that $\gamma$ crosses at least
	\[
		\delta L_k / L_0 = \delta L_k / L_u
	\]
	 $0$-good boxes (for the equality above we used the definition of $L_0$ in~\eqref{eq:L0_scale}). Hence, $\gamma$ crosses at least $3^{-d} \delta L_k / L_u$ $0$-good boxes which are non-adjacent.
	
	By the definition of a $0$-good box in~\eqref{eq:multiscale_0-good},  the crossing of each $0$-good box takes length at least $C_{\const{prop:annulus}{\ell} } u^{-\frac{1}{2}+8\varepsilon} L_u$. Then the length of $\gamma$  is at least 
 	\[
 		\frac{1}{3^d} \left(	\delta L_k / L_u \right)\left( C_{\const{prop:annulus}{\ell}} u^{-\frac{1}{2}+8\varepsilon} L_u \right)
 		= 
 		 \left(	 \frac{1}{3^d} \delta C_{\const{prop:annulus}{\ell}} \right)\cdot u^{4\varepsilon}  \cdot L_u  \cdot L_k .
 	\]
  We finish the proof of the proposition by taking $ \Cnt{c:low}  = 3^{-d} \delta C_{\const{prop:annulus}{\ell}} $. Since $L_k = L_u \prod_{j= 0}^k \ell_j$, we can give a bound on $k$ in terms of $n$ in~\eqref{eq:asymp-bound}, recalling that the scales $\Theta = (\theta_{\text{sc}}, L_0, l_0, r_0 )$ depends on the upper bound on the intensity $\uBound{u:propLowerBound}$.
\end{proof}

\subsection*{Acknowledgements}

We would like to thank the anonymous referee, whose extremely dedicated and thorough work has greatly improved this paper. We would like to thank Bal\'azs R\'ath for insightful discussions that were greatly instrumental for the proof of Theorem \ref{thm:main_upper}. We would like to thank Art\"{e}m Sapozhnikov for insightful discussions that were greatly instrumental for the proof of Theorem \ref{thm:main_lower},  and for typing some of the proofs in Section \ref{subsec"chem_random_walk}.

\bibliography{RIbiblio}
\bibliographystyle{plain}

\end{document}